\def\smallpar{\smallbreak\@afterindentfalse\@afterheading\ignorespaces}
\def\quarter{{\textstyle{1\over4}}}
\def\ent{{ \rm \sc ENT}}
\newtheorem{theorem}{Theorem}[section]
\newtheorem{proposition}[theorem]{Proposition}
\newtheorem{lemma}[theorem]{Lemma}
\newtheorem{corollary}[theorem]{Corollary}
\newtheorem{definition}[theorem]{Definition}
\newtheorem{claim}[theorem]{Claim}
\newcommand{\lab}{\label}
\def \f {{\mathcal F}}
\newcommand{\E}{{\mathbf E}}
\newcommand{\ENT}{{\rm ENT}}
\newcommand{\Z}{{\bf Z}}
\newcommand{\one}{{\mathbf 1}}
\newcommand{\bi}{\begin{itemize}}
\newcommand{\ei}{\end{itemize}}
\newcommand{\be}{\begin{enumerate}}
\newcommand{\ee}{\end{enumerate}}
\newcommand{\sgn}{{\operatorname{sgn}}}
\newcommand{\beq}{\begin{equation}}
\newcommand{\eeq}{\end{equation}}
\newcommand{\beqa}{\begin{eqnarray*}}
\newcommand{\eeqa}{\end{eqnarray*}}
\newcommand{\btm}{\begin{theorem}}
\newcommand{\etm}{\end{theorem}}
\newcommand{\bpf}{\begin{proof}}
\newcommand{\epf}{\end{proof}}
\newcommand{\bla}{\begin{lemma}}
\newcommand{\ela}{\end{lemma}}
\newcommand{\bdn}{\begin{definition}}
\newcommand{\edn}{\end{definition}}
\newcommand{\bpn}{\begin{proposition}}
\newcommand{\epn}{\end{proposition}}
\newcommand{\bcy}{\begin{corollary}}
\newcommand{\ecy}{\end{corollary}}
\newcommand\abs[1]{\left \vert #1 \right \vert}
\newcommand\Abs[1]{\left \Vert #1 \right \|}
\newcommand\bbE{\mathbb E}
\newcommand\bbP{\mathbb P}
\newcommand\bbT{\mathbb T}
\newcommand\id{\operatorname{id}}
\def \u {{ \cal U}}
\def\ldotsplus{\mathinner{\ldotp\ldotp\ldotp\ldotp}}
\def\fourdots{\relax\ifmmode\ldotsplus\else$\m@th \ldotsplus\,$\fi}
\def\bigomega{{\Omega}}
\def\bigo{{\rm O}}
\def\id{{\rm id}}
\def \tc {{$3$-collision }}
\def\sfrac#1#2{{\textstyle{#1 \over #2}}}
\def\half{{\textstyle{1\over2}}}
\def\eighth{{\textstyle{1\over8}}}
\def \one {{\mathbf 1}}
\def \e {{\mathbf E}}
\def\var{{\rm  \bf Var}}
\def\p {{ \mathbf P}}
\def\P {{ \mathbb P}}
\def\given {{\,|\,}}
\def\law{{\cal L}}
\def\given {{\,|\,}}
\def \tail {{\mathcal T}}
\def \tailt {{\mathcal {\tilde T}}}
\def \Et {{E}}
\def \bbE {{\e}} 
\def \bbP {{\P}}
\def \m {{M}}
\def \lambdam {{\lambda^m}}
\def \lambdahm {{\widehat{\lambda}^m}}
\def \plaw {{\mathcal P}}
\def \qlaw {{\mathcal Q}}
\def \ut {{\mathcal U_T}}
\def \boxi {{\mathcal B_i}}
\def \boxj {{\mathcal B_j}}
\def \boxk {{\mathcal B_k}}
\def \li {{\mathcal L_i}}
\def \lj {{\mathcal L_i}}
\def \lk {{\mathcal L_i}}
\def \tilet {{\mathbb T}}
\def \boxl {{B_\ell}}
\def \tilehat {{\widehat{{\mathbf T}}}}
\def \R {{\mathbf R}}
\def \ntol {{N_{\text{tol}}}}
\def \shat {{\widehat S}}
\def \effi {{\mathcal F}_i}
\def \effj {{\mathcal F}_j}
\def \effk {{\mathcal F}_k}
\def \tesc {{\tau_{w}}}
\def \po {{\overline{p}}}
\def \ellp {{\bar l}}
\def \pt {{\pi_{(t)}}}
\def \kss {{ k^*}}
\def \ptt {{\widetilde P}}
\def \btt {{\widehat B}}
\begin{document}
\title{Mixing time of the torus shuffle}
\author{Olena Blumberg\thanks{Email:
    {\tt square25@gmail.com}}, 
  Ben Morris\thanks{Department of Mathematics, University of California, Davis.
    Email:
    {\tt morris@math.ucdavis.edu}
  } \; and Alto Senda\thanks{Email:
  {\tt aesenda@ucdavis.edu}}}
\maketitle
\begin{abstract}
  We prove a theorem that reduces bounding the mixing time of a card shuffle to verifying a condition that involves only triplets of cards.
Then we use it to analyze a classic model of card shuffling.

In 1988, Diaconis introduced the following Markov chain. Cards are arranged in an $n$ by $n$ grid. Each step,
choose a row or column, uniformly at random,  and cyclically rotate it by one unit in a random direction. He conjectured that the mixing time is $\bigo(n^3 \log n)$.
We obtain a bound that is within a poly log factor of the conjecture.
\end{abstract}

\noindent{\em AMS Subject classification :\/ 60J10 } \\
\noindent{\em Key words and phrases:\/  torus shuffle, entropy technique}  \\
\noindent{\em Short title: Torus shuffle}

\section{Introduction}

In the open problems section of his book \cite{diaconis}, 
Diaconis introduced the following variant of the classic fifteen puzzle. Cards (hereafter called {\it tiles})
are arranged in
an $n \times n$ grid. Each step,
choose a row or column, uniformly at random, 
and cyclically rotate it in a random direction. (Or, equivalently, choose a random tile and
move it up, down, left or right, moving all the other tiles in the same row or column, as appropriate,  along with it.) 
Figure \ref{moveright} below shows a typical move of the shuffle, with the second row being moved to the right.
\begin{figure}[t]
  \centering
\begin{tikzpicture}
\draw[step=0.5cm,color=gray, xshift=-0.5cm, yshift=-0.0cm] (-1,-1) grid (1,1);
\node at (-1.25,+0.75) {1};
\node at (-0.75,+0.75) {4};
\node at (-0.25,+0.75) {9};
\node at (+0.25,+0.75) {16};
\node at (-1.25,+0.25) {2};
\node at (-0.75,+0.25) {3};
\node at (-0.25,+0.25) {8};
\node at (+0.25,+0.25) {15};
\node at (-1.25,-0.25) {5};
\node at (-0.75,-0.25) {6};
\node at (-0.25,-0.25) {7};
\node at (+0.25,-0.25) {14};
\node at (-1.25,-0.75) {10};
\node at (-0.75,-0.75) {11};
\node at (-0.25,-0.75) {12};
\node at (+0.25,-0.75) {13};
\node at (-1.70,+0.25) {$\to$};
\node at (+0.70,+0.25) {$\to$};
\node at (+1.3, 0) {$\implies$};
\draw[step=0.5cm,color=gray] (2,-1) -- (2,1);
\draw[step=0.5cm,color=gray] (2,-1) grid (4,1);
\node at (+2.25,+0.75) {1};
\node at (+2.75,+0.75) {4};
\node at (+3.25,+0.75) {9};
\node at (+3.75,+0.75) {16};
\node at (+2.25,+0.25) {15};
\node at (+2.75,+0.25) {2};
\node at (+3.25,+0.25) {3};
\node at (+3.75,+0.25) {8};
\node at (+2.25,-0.25) {5};
\node at (+2.75,-0.25) {6};
\node at (+3.25,-0.25) {7};
\node at (+3.75,-0.25) {14};
\node at (+2.25,-0.75) {10};
\node at (+2.75,-0.75) {11};
\node at (+3.25,-0.75) {12};
\node at (+3.75,-0.75) {13};
\end{tikzpicture}
  \caption{A possible move in the torus shuffle in the $4 \times 4$ case. Here, the second row is moved to the right}
  \label{moveright}
\end{figure}
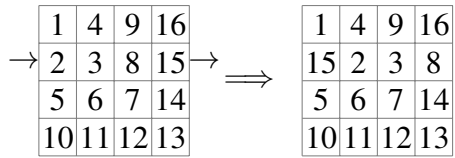
This Markov chain is now the basis of a game for iPhone called the {\it torus puzzle} and we shall call the chain the
{\it torus shuffle.}
Diaconis observed that $\bigo(n^3)$ moves are enough to randomize a single tile
and conjectured that $\bigo(n^3 \log n)$ moves are enough to randomize the entire puzzle.

    The torus shuffle has been hard to analyze. It is a random walk on a group,
and it is easy to see that four moves of the torus shuffle can be used to generate certain  
cycles of length $3$ involving adjacent tiles (see Section \ref{fitsin} for more details).
These $3$-cycles can in turn be used to generate arbitrary pairs of transpositions. 
Thus, a first attempt to bound the mixing time would be to use the comparison techniques for random walks on groups
developed by Diaconis and Saloff-Coste in \cite{rwg} and compare the torus shuffle to
(two steps of) shuffling by random transpositions (see \cite{diashah}). However, since
order $n$ moves of the torus shuffle are
required to simulate a typical pair of transpositions, this results in an upper bound of $\bigo(n^4 \log n)$,
a factor of $n$ higher than the conjecture. 

In the present paper, we 
give the first detailed analysis of the torus shuffle,
and prove that the mixing time is $\bigo(n^3 \log^3 n)$, which is a within a poly log factor of Diaconis's conjecture.
We accomplish this by first generalizing the entropy-based technique introduced in \cite{morris} to
handle shuffles that can be defined using {\it $3$-collisions}, that is, random permutations whose distribution 
an even mixture of a $3$-cycle and the identity. For such shuffles, we prove a theorem that reduces
bounding the mixing time to verifying a condition involving triplets of cards. We believe this theorem is
of independent interest. Indeed, it has already been used by the first two authors and Hans Oberschelp to prove an
upper bound (tight to within a poly-log factor) for the so-called overlapping cycles shuffle that was introduced
by Jonasson \cite{jonasson}
and for which  Angel, Peres and Wilson \cite{apw} studied the motion of a single card.   

The remainder of this paper is organized as follows.
In Section 
\ref{bk} we give some necessary background on entropy
including the notion of conditional relative entropies for random permutations. 
In Section \ref{collis}    
we define {\it $3$-Monte shuffles}, the general model of card shuffling
to which our main theorem will apply. In Section \ref{maintheorem} we prove the main theorem.
In Section \ref{fitsin} show how the torus shuffle can be written as a $3$-Monte shuffle.
In Section \ref{threestage} we prove the main technical lemma involving the motion
of three tiles in the torus shuffle. 
Finally, in Section \ref{payoff} we use the technical lemma and the main theorem
to prove an upper bound for the mixing time.

\section{Background} 
\label{bk}
In this section we give some basic definitions and review some of the parts of \cite{morris} that are needed here.
We start with a formal definition of the mixing time. 
Let $X_t$
be a Markov chain on a finite state space $V$
with a uniform stationary distribution.
For probability measures $\mu$ and $\nu$ on $V$,
define the total variation distance
$|| \mu - \nu || = \sum_{x \in V} |\mu(x) - \nu(x) |$, 
and 
define the mixing time
\begin{equation}
\label{mixingtime}
T_{\rm mix} = \min \{t: || \P(X_t = \cdot) - \u || \leq \quarter \}, \,.
\end{equation}
where $\u$ denotes the uniform distribution.

Next, we give some background on relative entropy. (See section 2.3 of \cite{covthomas} for
more detail.) 
For   
probability distributions
$\{a_i: i \in V\}$ and
$\{b_i: i \in V\}$ on $V$, define the
{\it relative entropy} of $a$ with respect to $b$ by
\[
  D(a||b) := \sum_{i \in V} a(i) \log {a(i) \over b(i)} \;,
\]
where we define $0 \log 0 = 0$.
We are most interested in the case where $b$ is the uniform distribution over $V$. 
If  $\{p_i: i \in V\}$ is a probability distribution on $V$, then
we denote by $\ent(p)$ the relative entropy of $p$ with respect to the uniform distribution,
that is, 
\[
  \ent(p) := \sum_{i \in V} p_i \log (|V| p_i) \;,
\]
and we will call $\ent(p)$ simply the relative entropy of $p$. 
Let $\u$ denote the uniform distribution over $V$.
Note that $\ent(p) = H(\u) - H(p)$, where $H(\, \cdot \,)$ is the entropy, defined by
\[
  H(q) = \sum_{i \in V} -q_i \log q_i \,.
\]
Pinsker's inequality links relative entropy to total variation distance:
\[
  || a - b ||_{\rm TV} \leq \sqrt{ \half D(a || b) } \;.
\]
In the case where $b$ is the uniform distribution, Pinsker's inequality gives
\begin{equation}
\label{totent}
|| p - \u ||
\leq \sqrt{ \half \ent(p)} \;.
\end{equation}
If $X$
is a random variable 
taking finitely many values, 
define $\ent(X)$ as the relative entropy of the
distribution of $X$, with a similar definition for $H(X)$. 
Note that if $\P(X = i) = p_i$  for $i \in V$ then
$\ent(X) = \e(\log (|V| p_X))$. 

\subsection{Conditional Relative Entropies}
Let $\pi$ be a random permutation in $S_n$.
We shall think of $\pi$ as representing the order of cards in a deck, with
$\pi(i) = \mbox{position of card $i$}$.
Let $\nu$ be a uniform random permutation in $S_n$ and
suppose that $g$ is a function on $S_n$ such that 
for any $x$ in the image of $g$,
the conditional distribution of $\nu$ given that $g(\nu) = x$
is uniform over some set. Denote by
$\law( \pi \given g(\pi) = x)$ the conditional distribution of $\pi$ given that $g(\pi) = x$
and denote by $\ent( \pi \given g(\pi) = x)$
the relative entropy of $\law( \pi \given g(\pi) = x)$ with respect
to $\law( \nu \given g( \nu) = x)$.
For example, since $\law( \nu \given \nu^{-1}(1) = 1)$
(that is, the conditional distribution of $\nu$ given that card $1$
is in the top of the deck)
is uniform over over a set of
permutations of size $(n-1)!$, it follows
that the relative entropy
$\ent( \pi \given \pi^{-1}(1) = 1)$ is the relative entropy
of $\law( \pi \given \pi^{-1}(1) = 1)$ with respect to the uniform distribution
over a set of size $(n-1)!$.
%
\indent  We  write $\ent( \pi \given g(\pi))$ for the random variable that takes the value 
$\ent( \pi \given g(\pi) = x)$ when $g(\pi) = x$. If $h$ is a function on $S_n$
such that $\law( h(\nu) \given g(\nu) = x)$ is uniform then
we denote by
$\ent( h(\pi) \given g(\pi) = x)$ the relative entropy of
$\law( h(\pi) \given g(\pi) = x)$ with respect to
$\law( h(\nu) \given g(\nu) = x)$.

\section{Collisions and Monte shuffles}
\label{collis}
Define a $2$-collision (respectively, $3$-collision) as a permutation that is
an even mixture of a transposition (respectively, $3$-cycle) and the identity.
The main theorem in \cite{morris}
is related to card shuffles that mix up the deck using $2$-collisions. 
Such shuffles are called {\it Monte shuffles}, after the game three card Monte.
The theorem 
bounds the change in relative entropy after many steps of a Monte shuffle, 
assuming a condition that only involves pairs of cards.
A main result of the present paper is to generalize the entropy technique to shuffles that
mix up the deck using $3$-collisions. We shall call such shuffles $3$-Monte. \\
~\\
Before moving to $3$-collisions, we first
give a rough sketch of the intuition behind
the entropy technique (involving $2$-collisions) introduced in \cite{morris}.
The first step is to decompose the relative entropy
of the initial permutation into contributions from individual positions
in the deck. 
(We shall omit the details of the entropy decomposition used in \cite{morris}
since we shall use a slightly different decomposition in the present paper.) \\
~\\
Let us now recall Knuth's shuffle \cite{knuth}
(also known as the Fisher Yates shuffle). \\
~\\
{\bf Knuth's shuffle.} For $i$ from $n$ down to $1$: swap the card in position $i$
with a card chosen uniformly at random from positions $1, 2, \dots, i$. \\
~\\
Note that Knuth's
shuffle produces a uniform random permutation. (The first step places a random card in the bottom position.
The next step places a card chosen at random from
the remaining cards in the position second from the bottom, and so on.) \\
~\\
The intuiton behind
the entropy technique is to use many steps of a Monte shuffle to roughly simulate
a single application of Knuth's shuffle.
More precisely, we
run a Monte shuffle for a large number $t$ of steps. Then we condition
on the results of all the shuffles with the exception of a certain set $M$
of collisions with the property that each card is involved in
at most one collision in $M$. If a collision in $M$
involves cards $i$ and $j$ we say that $i$ and $j$ are {\it matched,}
and we write $M(i) = j$. 
Theorem 9 of \cite{morris}
says that if $M(i)$ is roughly uniform over $1, 2, \dots, i-1$
then the collision involving $i$ results in a drop of relative entropy
that is roughly the entropy attributable to position $i$. 
In a useful application
of the theorem, the parameter $t$ is chosen so that for a non-neglible
portion of cards $i$, the distribution of $M(i)$ is roughly
uniform over cards $1, 2, \dots, i-1$. We refer the
reader to \cite{morris} for all the details. \\
~\\   
In the present paper we focus on shuffling with $3$-cycles,
so it is helpful to think about how Knuth's shuffle could be
modified to incorporate $3$-collisions.
Since multiplication by a $3$-cycle doesn't change the sign of a permutation
we have to assume that the permutation starts equally likely to be odd or even. 
Then we perform the following modified version of Knuth's shuffle:\\
~\\
{\bf Modified Knuth's shuffle.} For $i$ from $n$ down to $3$:
move a uniform random card from positions $1, 2, \dots, i$
to position $i$ either by doing nothing or by performing a
$3$-cycle involving only cards from positions in
$\{1, 2, \dots, i\}$. \\
~\\
Note that this modified version of Knuth's shuffle results in a uniform random permutation;
this can be seen by an argument similar to the argument above for the usual Knuth's shuffle. 
The intuiton behind
the entropy technique in the present paper is to use many steps of a $3$-Monte shuffle to roughly simulate
a single application of modified Knuth's shuffle.

\subsection{$3$-collisions and entropy decomposition}
We shall now define a {\it 3-collision}, 
which is the basic ingredient of the card
shuffles analyzed in the present paper. 
If $\mu$ is a random permutation in $S_n$ such that
\[
\mu= 
\left\{\begin{array}{ll}
\id & 
\mbox{with probability $\half$;}\\
(a,b,c) & \mbox{with probability $\half$,}\\
\end{array}
\right.
\]
for some distinct $a,b,c \in \{1, 2, \dots, n\}$ (where
we write $\id$ for  the identity permutation and 
where $(a,b,c)$ is a $3$ cycle, 
then we will call $\mu$ a {\it 3-collision.}
In the present paper, we shall focus on shuffles that mix up the deck using
$3$-collisions. One complication here is that multiplication by a \tc does not change the sign of the
permutation. Thus, we focus on how the conditional distribution of a permutation given its sign
mixes. \\
~\\
Suppose $\mu$ is a permutation in $S_n$.
For $1 \leq i \leq n$,
let $\tail^{\mu}_{i} = (\mu^{-1}(i), \dots, \mu^{-1}(n))$, the
tail end of the permutation $\mu$ starting from position $i$
(that is, the bottom $(n - k + 1)$ cards in the deck)
and let $\tail^{\mu}_{n+1}$ be the empty string.
When the permutation referred to is clear from context,
we shall write $\tail_i$ in place of $\tail^\mu_i$. 

Let $\pi$ be a random permutation
in $S_n$
and let $\nu$ be a uniform random permutation in $S_n$. 
Applying the entropy chain rule (see \cite{covthomas}) 
to the random vector
\[
  ( \sgn(\pi),  \pi^{-1}(n), \pi^{-1}(n-1), \dots,  \pi^{-1}(3))
\]
gives
\begin{eqnarray*}
  H(\pi) &=& H(\sgn(\pi)) + \sum_{k=i}^n H(\pi^{-1}(k) \given \tail^\pi_{k+1}) +
             H( \pi \given \tail^{\pi}_i, \sgn(\pi)) \,.
\end{eqnarray*}
Note that
\begin{eqnarray*}
  H(\nu) &=& \log n!    \\
         &=& \log 2 + 
             \sum_{k=i}^n \log k + \log (i-1)!  \,.
\end{eqnarray*}
Hence the relative entropy
$\ent(\pi) = H(\nu) - H(\pi)$ is
\begin{eqnarray*}
\ENT(\sgn(\pi)) +    \sum_{k=i}^{n} \e\left(\ENT(\pi^{-1}(k) \given \tail_{k+1}^{\pi}, \sgn(\pi))\right) +  
  \e\left(\ENT(\pi   \given
  \tail_i^{\pi}, \sgn(\pi))\right)   \;.      
\end{eqnarray*}
If we define $\tailt_i^\pi := (\tail_i^\pi, \sgn(\pi))$, then we can write this more compactly as
\begin{eqnarray}
  & & \ENT(\pi)  \nonumber \\
  &=&   \ENT(\sgn(\pi)) +    \sum_{k=i}^{n} \e\left(\ENT(\pi^{-1}(k) \given \tailt_{k+1}^{\pi})\right) +  
  \e\left(\ENT(\pi \given
  \tailt_i^{\pi})\right) \;         \label{threecomp} 
\end{eqnarray}
Define $\Et_k = \e\left(\ENT(\pi^{-1}(k) \given \tailt_{k+1}^{\pi})\right)$.
Then using $i=3$ in (\ref{threecomp}) allows us to decompose $\ent(\pi)$ as follows:
\begin{eqnarray}
\ENT(\pi)   &=&   \ENT(\sgn(\pi)) +    \sum_{k=3}^{n} \Et_i, \;        
\end{eqnarray}
and in particular, if $\pi$ is equally likely to be odd or even, then
\begin{eqnarray*}
  & & \ENT(\pi)  =   \sum_{k=3}^{n} \Et_i \; .       
\end{eqnarray*}
It is helpful to think of $\Et_i$ as the portion of relative entropy contributed by position $i$.
Note also that the quantity $\sum_{k=3}^{n} \Et_i,$ is the expected relative entropy of 
$\pi$ given its sign:
\[
  \sum_{k=3}^{n} \Et_i = \e \Bigl( \ENT( \pi \given \sgn(\pi)) \Bigr) \;.
\]

\section{$3$-collisions/entropy theorem}
\label{maintheorem}

In Theorem 9 of \cite{morris}, the second author proves a lower bound on the loss of
relative entropy after many steps of a shuffle that involves $2$-collisions.
In this section we prove an analogous result for $3$-collisions.

If $\pi$ and $\mu$ are permutations in $S_n$, then we write $\pi\mu$ for the composition $\mu\circ \pi$.
We will consider random permutations $\pi$ that can be written in the form
\begin{align}
\pi = \nu c(x_1, y_1, z_1)\cdots c(x_k, y_k, z_k)
\end{align}
where $\nu$ is an arbitrary random permutation, the numbers $x_1, \dots, x_k, y_1, \dots, y_k, z_1, \dots, z_k$ are distinct
and $c(x_j,y_j,z_j)$ is a 3-collision of $(x_j, y_j, z_j)$ The values of the $x_j, y_j, z_j$ and the number of 3-collisions
(possibly $0$) may depend on $\nu$, but conditional on $\nu$ the collisions are independent. Such permutations will be
called \textit{3-Monte}, in keeping with the terminology used in \cite{morris}. As noted there, it is true that any random
permutation can be considered to be 3-Monte in the trivial sense of having no $3$-collisions;
our aim will be to write random 
permutations usefully in $3$-Monte form. 



Let $\pi$ be a random permutation that is 3-Monte and let $\pi_1, \pi_2 \dots$ be
independent copies of $\pi$. For $t \geq 1$, let $\pi_{(t)} = \pi_1 \cdots \pi_t$, and let $\pi_{(0)}$
be the identity. That is, $\pi_{(t)}$ is the random permutation after $t$ shuffles if
each shuffle has the same distribution as $\pi$. 


For $1 \leq x \leq n$, denote by \textit{card $x$} the card initially in position $x$. 
For cards $x, y, z$ (with $x,y,z$ distinct), say that $x, y$ and $z$
\textit{collide} at time $m$ if for some $i, j, k$ we have $\pi_{(m)}^{-1}(i) = x$, $\pi_{(m)}^{-1}(j) = y$, $\pi_{(m)}^{-1}(k) = z$ and $\pi_m$ has a 3-collision of $i, j$ and $k$. Note that the order of $x, y$ and $z$ matters in the definition. 



Recall that we can decompose the entropy of a random permutation $\pi$ by separating out the entropy attributable to the sign:
\begin{align}
   \ENT(\pi) = \E\left(\ENT(\pi \given \sgn(\pi))\right) + \ENT(\sgn(\pi)).
\end{align}

Since multiplying a random permutation by a $3$-collision cannot change its sign,
$3$-collisions can only affect the entropy not attributable to the sign.
The following is our main technical theorem. It allows us to reduce bounding the mixing time of a
card shuffle to analyzing triplets of cards.



\begin{theorem} 
\label{maintheorem}
Let $\pi$ be a 3-Monte shuffle on $n$ cards. Fix an integer $t > 0$ and suppose that $T$ is a random variable taking values
in $\{1, \dots,t\}$ that is independent of the shuffles $\{\pi_i : i \geq 0\}$. For distinct cards $x, y$ and $z$,
let $T_{xyz}$ be the time of the first
$3$-collision  in the time interval $\{T, \dots, t\}$
that involves either $x, y$ or $z$ (with $T_{xyz}$ undefined if no such collision occurs).
If the collision at time $T_{xyz}$ is a collision of $x, y$ and $z$
then we say that that collision matches $x$ with $y$ and $z$
and define $y$ to be the front match of $x$, written as $M_1(x) = y$, and $z$ to be the back match
of $x$,
written $M_2(x) = z$. Otherwise, define $M_1(x) = M_2(x) = x$. 

Suppose that for every card $k$ wtih $3 \leq k \leq n$ there is a constant
$A_k$ with  $0 \leq A_k \leq 1$
such that 
$\P(M_2(k)=j, M_1(k) < k) \geq \frac{A_k}{k}$ for each $j \in \{1, \dots, k-1\}$.
Let $\mu$ be an arbitrary random permutation that is independent of $\{\pi_i: i \geq 0\}$. Then

\begin{align}
  \nonumber
\E(\ENT(\mu\pi_{(t)}|\sgn(\mu\pi_{(t)}))) - \E(\ENT((\mu|\sgn(\mu)))) \leq \frac{-C}{\log n} \sum_{k=1}^{n}A_kE_k,
\end{align}
where $E_k = \E(\ENT(\mu^{-1}(k) \given \tailt_{k+1}^\mu))$ and $C$ is a universal constant. 
\end{theorem}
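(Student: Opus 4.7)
The plan is to adapt the strategy of Theorem 9 of \cite{morris}, which handled $2$-collisions and ordinary relative entropy, to the present setting of $3$-collisions and sign-conditional entropy. Using the decomposition (\ref{threecomp}) at $i = 3$, I would write both the initial and terminal sign-conditional entropies as sums over positions:
\[
\E(\ENT(\mu \given \sgn(\mu))) = \sum_{k=3}^n E_k, \qquad \E(\ENT(\mu\pi_{(t)} \given \sgn(\mu\pi_{(t)}))) = \sum_{k=3}^n F_k,
\]
where $F_k = \E(\ENT((\mu\pi_{(t)})^{-1}(k) \given \tailt^{\mu\pi_{(t)}}_{k+1}))$. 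It then suffices to prove that $F_k - E_k \leq -\tfrac{C}{\log n} A_k E_k$ for each $k$ and sum over $k$.

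Next, fix $k$ and isolate the effect of a single $3$-collision on the contribution at position $k$. I would condition on $T$, on all shuffles at times outside $\{T,\dots,t\}$, and on every $3$-collision inside $\{T,\dots,t\}$ other than the first one involving card $k$. The remaining randomness consists precisely of the collisions at times $T_{xyz}$ for triples $\{x,y,z\}$ containing $k$. The hypothesis on $M_1(k), M_2(k)$ guarantees that with probability at least $A_k$ this first collision is a $3$-cycle that cyclically rotates card $k$ with two cards whose initial positions both lie in $\{1,\dots,k-1\}$ and in which the back match $M_2(k)$ is approximately uniform on that set. This is precisely the action of one step of the modified Knuth's shuffle described in Section \ref{collis}.

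The quantitative core of the argument is then a lemma asserting that such an approximately uniform $3$-cycle mixing step at position $k$ drops $F_k$ below $E_k$ by at least $C A_k E_k / \log n$. I expect this to follow from a convexity / Dirichlet-form comparison in the spirit of the corresponding step of \cite{morris}, the factor $1/\log n$ arising from $E_k = O(\log n)$ when one passes from an ``entropy production'' expression to the entropy itself. Summing these position-by-position bounds over $k$ yields the theorem.

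I expect the main obstacle to be precisely this quantitative inequality. Two features make it more delicate than its $2$-collision analogue. First, a $3$-cycle cyclically permutes three conditional distributions rather than swapping two, so the usual pairwise convex-combination / Dirichlet-form argument has to be replaced by a symmetric statement comparing three conditional distributions simultaneously, using only the partial uniformity of $M_2(k)$ and the weaker information $M_1(k) < k$. Second, because $3$-cycles preserve the sign, every estimate must be carried out inside the sign-conditional filtration, which interacts with the tail decomposition $\tailt_k$ of (\ref{threecomp}) in a subtler way than the unconditioned tail used in \cite{morris}; checking that nothing is lost in this refinement will be the technically most demanding piece.
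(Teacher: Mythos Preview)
Your plan has a structural gap at the very first reduction. The position-by-position inequality $F_k - E_k \le -\tfrac{C}{\log n}A_kE_k$ cannot hold in general: applying $\pi_{(t)}$ changes the tail $\tailt^{\mu\pi_{(t)}}_{k+1}$, so entropy can migrate between positions. In particular, if $E_k=0$ your inequality forces $F_k=0$, yet a shuffle can certainly push entropy \emph{into} position $k$ from elsewhere. Only the total $\sum_k F_k \le \sum_k E_k$ is guaranteed, and that is not what you need.

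The paper does not compare the final decomposition to the initial one term by term. Instead it extracts \emph{all} matching collisions at once: let $\pi'_{(t)}$ be $\pi_{(t)}$ with every matching collision suppressed, and let $\nu=\prod_{i:\,m_1(i)<i,\,m_2(i)<i} c(i,m_1(i),m_2(i))$ be an independent product of fresh collisions. Because each matching collision is the \emph{first} one touching any of its three cards, these factors commute and can be pulled to the front, giving the distributional identity $\mu\pi_{(t)} \stackrel{d}{=} \mu\nu\pi'_{(t)}$. Conditioning on $M$ and $\pi'_{(t)}$ (which can only raise expected relative entropy) makes $\pi'_{(t)}$ a deterministic right factor that drops out, reducing the problem to $\E(\ENT(\mu\nu\mid M,\sgn\mu))-\E(\ENT(\mu\mid\sgn\mu))$. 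Now telescope over the partial products $\nu_k=\prod_{i\le k}c(i,m_1(i),m_2(i))$. The crucial point is that $\mu\nu_k$ and $\mu\nu_{k-1}$ agree in positions $k+1,\dots,n$, hence share the \emph{same} tail $\tailt^\mu_{k+1}$; by (\ref{threecomp}) their entropy difference lives entirely at position $k$, and \emph{this} is the per-$k$ quantity one bounds by $-CA_kE_k/\log n$. Your conditioning ``on everything except $k$'s collision'' leaves later shuffles in place, so the residual $3$-cycle acts on conjugated positions $\beta^{-1}(k),\beta^{-1}(i),\beta^{-1}(j)$ rather than on $k$ itself, and the connection to $E_k$ is lost.

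Your anticipated difficulty with three distributions also dissolves in the paper's setup. Writing $\lambda=\mu\nu_{k-1}$ and $\widehat\lambda=\lambda\,(k,i,j)$, one projects via $g(\lambda)=\lambda^{-1}(k)$; then $(\lambda)^{-1}(k)=\mu^{-1}(k)$ while $(\widehat\lambda)^{-1}(k)=\mu^{-1}(j)$ with $j=M_2(k)$, so only \emph{two} one-dimensional laws are compared through $d(p,q)=\tfrac12 p\log p+\tfrac12 q\log q-\tfrac{p+q}{2}\log\tfrac{p+q}{2}$. The front match $M_1(k)$ never enters beyond the event $\{M_1(k)<k\}$; the near-uniformity of $M_2(k)$, convexity of $d(\cdot,q)$, and Lemma~\ref{avlemma} yield the $E_k/\log n$ factor directly. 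The sign conditioning is harmless because $\sgn(\nu_k)=1$ for every $k$, so $\sgn(\mu\nu_k)=\sgn(\mu)$ throughout the telescope.
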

{\bf Remark.} If $\mu$ is equally likely to be an odd or even permutation,
then so is $\mu \pt$ and hence $\ENT (\sgn(\mu)) = \ENT (\sgn (\mu \pt)) = 0$.
In this case the theorem implies that
\begin{align}
  \nonumber
\ENT(\mu\pi_{(t)}) - \ENT(\mu) \leq \frac{-C}{\log n} \sum_{k=1}^{n}A_kE_k \,.
\end{align}

\begin{proof}{proof of Theorem \ref{maintheorem}.} Let   
  $M = \left(\m_1(i), \m_2(i) \right)_{i=1}^{n}$, a list of all matches. For distinct $i, j, k$ with $k <i$, $j < i$,
let $c(i, j, k)$ be a 3-collision of $i, j, k$ 
and for all $i$ define $c(i,i,i) = \id$. Assume that all of the $c(i, j, k)$ are independent of $\mu$, $\pi_{(t)}$, and each other. 


Next, define $\pi_{(t)}'$ to be the same as $\pi_{(t)}$ but with all of the $3$-collisions that match cards suppressed.
More precisely, note that if a card is matched with two other cards at some time in $\{T, \dots, t\}$,
then there is a certain
collision that matches it.
Thus,  $\pi_{(t)}$ can be written as a product of ``matching collisions'' and other permutations;
$\pi_{(t)}'$ is the permutation obtained from 
$\pi_{(t)}$ by replacing each matching collision with the identity.


For each possible value $m$ of $M$, define
\begin{align}
  \label{pprod}
\nu^m =  \prod_{\substack{i:m_1(i) < i \\ m_2(i)< i}} c(i,m_1(i), m_2(i)),
\end{align}
where the collisions in the product are independent, and define $\nu = \nu^M$. Note that since the collisions in the product in (\ref{pprod})
involve distinct cards, the order of multiplication doesn't matter. 
Furthermore, $\nu \pi_{(t)}'$ has the same distribution as $\pi_{(t)}$,
since the results of the collisions in $\nu$ can be coupled with the ``matching collisions'' that were removed from $\pi_{(t)}$ to make
$\pi'_{(t)}$.
It follows that

\begin{align} 
    \E(\ENT(\mu\pi_{(t)}|\sgn(\mu\pi_{(t)}))) = \E(\ENT(\mu\nu\pi_{(t)}'|\sgn(\mu\nu\pi_{(t)}'))).
\end{align}

By the convexity of $x\log x$, conditioning on more random variables can only increase the expected relative entropy,
so
\begin{align}
    \e(\ENT(\mu\nu\pi_{(t)}'|\sgn(\mu\nu\pi_{(t)}'))) \leq \e(\ENT(\mu\nu\pi_{(t)}'|M,\pi_{(t)}',\sgn(\mu\nu\pi_{(t)}'))).
    \label{eq:convexity of xlogx}
\end{align}
Since $\sigma(\pi_{(t)}', \sgn(\mu\nu\pi_{(t)}')   = \sigma( \pi_{(t)}',\sgn(\mu\nu)$, we have 
\begin{align}
     \bbE(\ENT(\mu\nu\pi_{(t)}'|M,\pi_{(t)}',\sgn(\mu\nu\pi_{(t)}')))
     =\bbE(\ENT(\mu\nu\pi_{(t)}'|M,\pi_{(t)}',\sgn(\mu\nu))).
\end{align}

Multiplying a random permutation by a deterministic permutation does not change its relative entropy.
Hence, given
$\pi_{(t)}'$ the conditional relative entropy of $\mu\nu\pi_{(t)}'$ is the same as that of $\mu\nu$.
Hence
\begin{align}
     \bbE(\ENT(\mu\nu\pi_{(t)}'|M,\pi_{(t)}',\sgn(\mu\nu)))
     =\bbE(\ENT(\mu\nu|M,\pi_{(t)}',\sgn(\mu\nu))).
\end{align}

Now, since $\mu$, $\nu$, and $\pi_{(t)}'$ are conditionally independent given $M$, it follows that
\begin{align}
    \label{eq:Removing_Conditioning_Result}
    \bbE(\ENT(\mu\nu|M,\pi_{(t)}',\sgn(\mu\nu)))
    =\bbE(\ENT(\mu\nu|M,\sgn(\mu\nu))).
\end{align}

Thus, we can focus on bounding $\bbE(\ENT(\mu\nu|M,\sgn(\mu\nu)))$.
Fix a possible value $m$ of $M$. For every $i$ with $m_1(i) < i$ and $m_2(i) < i$, let $(c(i, m_1(i), m_2(i))$
be independent collisions and define
\begin{align}
\nu_k^m = \prod_{\substack{i:m_1(i) < i \leq k \\ m_2(i)< i}} c(i,m_1(i), m_2(i)) . 
\end{align}
For $k$ with $0 \leq k \leq n$, define the random permutation $\nu_k := \nu_k^M$. 
Note that $\nu_0 = \id$ and $\nu_n = \nu$.
Furthermore, since $\mu$ is independent of $M$, we have $\bbE(\ENT(\mu|M,\sgn(\mu))) =\bbE(\ENT(\mu|\sgn(\mu)))$.
It follows that
\begin{equation}
\begin{aligned}
&\bbE(\ENT(\mu\nu_n \given M,\sgn(\mu\nu_n))) - \bbE(\ENT(\mu \given  \sgn(\mu)))) \\
&= \sum_{k=1}^{n} \left[ \bbE(\ENT(\mu\nu_k \given M,\sgn(\mu\nu_k))) - \bbE(\ENT(\mu\nu_{k-1} \given
  M,\sgn(\mu\nu_{k-1}))) \right ].
\end{aligned}
\end{equation}
Thus, it is enough to show that there is a universal constant $C > 0$ such that
for every $k$ we have 
\begin{align}
\bbE(\ENT(\mu\nu_k \given M,\sgn(\mu\nu_k))) - \bbE(\ENT(\mu\nu_{k-1} \given M,\sgn(\mu\nu_{k-1}))) \leq \frac{-CA_kE_k}{\log n}.
\label{eq:result_enough_for_theorem}
\end{align}

Since $\sgn(\nu_k) = \sgn(\nu_{k-1}) = 1$ as they are products of $3$-cycles, we have
$\sgn(\mu\nu_k) = \sgn(\mu\nu_{k-1}) = \sgn(\mu)$, so
the inequality (\ref{eq:result_enough_for_theorem}) is equivalent to
\begin{align}
\label{prel}
\e(\ENT(\mu\nu_k|M,\sgn(\mu))) - \e(\ENT(\mu\nu_{k-1}|M,\sgn(\mu))) \leq \frac{-CA_kE_k}{\log n}.
\end{align}
Since $\mu \nu_k$ and $\mu \nu_{k-1}$ have the same sign and agree with $\mu$ in positions
$k+1, k+2, \dots, n$, by the entropy decomposition formula
(\ref{threecomp}) 
the lefthand side of (\ref{prel}) is
\begin{equation}
\label{needtobound}
  \e \Bigl( \ent( \mu \nu_k \given M, \tailt_{k+1}) - \ent( \mu \nu_{k-1} \given M,  \tailt_{k+1}) \Bigr), 
\end{equation}
where $\tailt_{k+1} := \tailt_{k+1}^\mu$. 
Let $m$ be a possible value of $M$. When $M = m$, the entropy difference in (\ref{needtobound}) is
\begin{equation}
\label{needtobound2}
 \ent( \mu \nu^m_k \given \tailt_{k+1}) - \ent( \mu \nu^m_{k-1} \given \tailt_{k+1}) .
\end{equation}
We shall now bound the expectation of this quantity in terms of the distribuion of $\mu$.

If $m$ is a matching such that $m_1(k) > k$ or $m_2(k) > k$, then $\nu_k = \nu_{k-1}$ and hence 
the quantity (\ref{needtobound2}) is $0$. Otherwise, we have $m_1(k) < k$ and $m_2(k) < k$.
Let $i = m_1(k)$ and $j = m_2(k)$. Then
$\nu_k = \nu_{k-1} c(k, i, j),$ with $i < k$ and $j < k$. Our analysis in this case  will follow that of  \cite{morris}.


Define
\[
  \lambdam = \mu\nu^m_{k-1}   \;\;\;\;\;\;\; \lambdahm = \lambdam (k, i, j) \;.
\]
Note that $\mu \nu^m_{k} = \lambdam c(k,i,j)$, and hence 
the distribution of $\mu \nu^m_{k}$ is an even mixture of the 
distributions of $\lambdam$ and $\lambdahm$. Furthermore, 
$\lambdahm$ and $\lambda^m$ agree in all but three positions, and in those positions
$\lambdam$ agrees with $\mu$. More precisely, we have
\begin{align}
    (\lambdahm)^{-1}(k) &= (\lambdam)^{-1}(j) = \mu^{-1}(j);     \label{m1}      \\
    (\lambdahm)^{-1}(i) &= (\lambdam)^{-1}(k) = \mu^{-1}(k);     \label{m2}                \\ 
    (\lambdahm)^{-1}(j) &= (\lambdam)^{-1}(i) = \mu^{-1}(i)       \label{m3}           .
\end{align} 
It follows that $\sgn(\lambdahm) = \sgn(\lambdam) = \sgn(\mu)$
and $\tail^\lambdahm_{k+1} =
\tail^\lambdam_{k+1} = \tail^\mu_{k+1}$.  


Now, fix a tail 
$T$ and a sign $s$ and define
\[
  \plaw := \law( \lambdam  \given \tailt_{k+1} = (T, s)) ;
  \]
    \[
      \qlaw := \law( \lambdahm  \given \tailt_{k+1} = (T, s)).
\]
Then
\begin{equation}
  \begin{aligned}
    \label{inlaws}
 \law(\mu \nu_k^m \given \tailt_{k+1} = (T,s))  &=\frac{\plaw_{\eta}+ \qlaw_{\eta}}{2}.    
\end{aligned}
\end{equation}
Also, since $\lambdahm = \lambda (k,i,j)$
and $(k, i, j)$ is a fixed permutation, it follows that $\ENT( \plaw ) = \ENT(\qlaw)$. Hence
\begin{equation}
  \label{same}
  \ENT(\plaw) = \frac{1}{2} \ENT(\plaw) + \frac{1}{2} \ENT(\qlaw),  
\end{equation}
that is 
\begin{equation}
\begin{aligned}
    \sum_{\eta} \plaw_{\eta}\log(\frac{k!}{2} \cdot p_{\eta}) 
    &= \frac{1}{2}\sum_{\eta} \plaw_{\eta}\log(\frac{k!}{2} \cdot \plaw_{\eta}) + \frac{1}{2}\sum_{\eta} \qlaw_{\eta}\log(\frac{k!}{2} \cdot q_{\eta}),
\end{aligned}
\end{equation}
where the sum is over the possible values of $\lambdahm$ when $\tailt_{k+1} = (T, s)$. 
We now recall  some definitions and background from \cite{morris}.
For real numbers $p, q \geq 0$, define 
\begin{align}
    d(p, q) := \frac{1}{2}p\log p + \frac{1}{2}q\log q - \frac{p+q}{2}\log\left(\frac{p+q}{2}\right),
\end{align}
where $x\log x := 0$. 
For a fixed $p \geq 0$, the function $d(p, \cdot)$ is convex, and by symmetry, for a fixed $q \geq 0$, the function $d(\cdot, q)$ is also convex. 
If the sequences $p = \{p_i: i \in V\}$ and $q = \{q_i: i \in V\}$ both represent probability distributions on $V$, we define the ``distance'' 
\begin{align}
    d(p, q) := \sum_{i\in V} d(p_i, q_i),
\end{align}
so that
\[
  \ENT \Bigl( {p + q \over 2} \Bigr) = \ENT(p) + \ENT(q) - d(p,q) .
\]
Applying this to
$\law(\mu \nu_k^m \given \tailt_{k+1} = (T, s))$, which is equal to $\frac{\plaw_{\eta}+ \qlaw_{\eta}}{2}$,
gives
\begin{eqnarray*}
  \ENT( \mu \nu_k^m \given\tailt_{k+1} = (T,s) ) &=& \frac{1}{2} \ENT( \plaw ) + \frac{1}{2} \ENT( \qlaw )
                                                             - d( \plaw, \qlaw) \\
                                                 &=& \ENT(\plaw)- d( \plaw, \qlaw)     \\ 
                                                 &=&  \ENT( \mu \nu_{k-1}^m \given \tailt_{k+1} = (T,s)     ) - d( \plaw, \qlaw),
\end{eqnarray*}
where the second line follows from equation (\ref{same}). The task thus reduces to bounding
\[
  d(\plaw, \qlaw) =
  d \Bigl( \law( \lambdam  \given \tailt_{k+1} = (T, s)), d( \law( \lambdahm  \given \tailt_{k+1} = (T, s))  \Bigr).
  \]
To that end, we now recall the projection lemma proved in \cite{morris}:

\begin{lemma} \cite{morris}
Let $X$ and $Y$ be random variables with distributions $p$ and $q$, respectively. Fix a function $g$ and let $P$ and $Q$ be the distributions of $g(X)$ and $g(Y)$, respectively. Then $d(p, q) \geq d(P, Q)$.
\end{lemma}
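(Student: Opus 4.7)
The plan is to deduce the lemma from the log-sum inequality after recognizing that $d(\cdot,\cdot)$ is (the pointwise contribution to) the Jensen--Shannon divergence. This works because the lemma is essentially a data-processing statement for an $f$-divergence, and at the atomic level all such statements reduce to the log-sum inequality.

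The first step would be to rewrite $d$ in the equivalent symmetric form
\[
d(p,q) \;=\; \tfrac{1}{2}\,p\log\frac{2p}{p+q} + \tfrac{1}{2}\,q\log\frac{2q}{p+q},
\]
which one obtains from the three-term definition by pulling $\log\tfrac{p+q}{2}$ inside the $p\log p$ and $q\log q$ terms. In this form $\sum_x d(p(x),q(x))$ is literally the Jensen--Shannon divergence between $p$ and $q$. Next, for each $v$ in the image of $g$, I would apply the log-sum inequality to the nonnegative sequences $\{p(x)\}$ and $\{(p(x)+q(x))/2\}$ indexed by $x\in g^{-1}(v)$ to get
\[
\sum_{x:\,g(x)=v} p(x)\log\frac{2\,p(x)}{p(x)+q(x)} \;\geq\; P(v)\log\frac{2P(v)}{P(v)+Q(v)},
\]
together with the same inequality after swapping the roles of $p$ and $q$. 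Averaging these two and then summing over $v$ would collect into $\sum_x d(p(x),q(x)) \geq \sum_v d(P(v),Q(v))$, which is the claim.

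I do not expect a real obstacle. The only minor issue is that the log-sum inequality needs positive denominators, so pairs with $p(x)+q(x)=0$ have to be omitted, but these contribute $0$ on both sides under the convention $0\log 0 = 0$. An alternative route would be to verify that $(p,q) \mapsto d(p,q)$ is jointly convex and positively homogeneous of degree one (both immediate from the symmetric form above) and then invoke Jensen's inequality at scale $|g^{-1}(v)|$; this is equally short but requires the homogeneity trick, whereas the log-sum argument is entirely self-contained.
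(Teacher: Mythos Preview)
Your proof is correct. The paper does not actually prove this lemma; it only quotes it from \cite{morris} (``We now recall the projection lemma proved in \cite{morris}''), so there is no in-paper argument to compare against. Your route via the log-sum inequality, after rewriting $d(p,q)$ in its Jensen--Shannon form, is a clean self-contained proof; the handling of the $p(x)+q(x)=0$ case under the $0\log 0=0$ convention is fine.
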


Applying the lemma to the function $g$ on permutations such that $g(\lambda) = \lambda^{-1}(k)$ gives 
\begin{equation}
\begin{aligned}
  &d(\law (\lambdahm \given \tailt_{k+1} = (T, s)), \law (\lambdam \given \tailt_{k+1} = (T, s)))  \\
&\geq d(\law( (\lambdahm)^{-1}(k)  \given \tailt_{k+1} = (T, s)), \law( (\lambdam)^{-1}(k) \given \tailt_{k+1} = (T, s)))\\
&= d(\law(\mu^{-1}(j) \given \tailt_{k+1} = (T, s)), \law(\mu^{-1}(k) \given \tailt_{k+1} = (T, s))),
\end{aligned}
\end{equation}
and hence the entropy difference
\begin{eqnarray*}
  & &   \ENT( \mu \nu_k^m \given\tailt_{k+1} = (T,s) ) - \ENT( \mu \nu_{k-1}^m \given \tailt_{k+1} = (T,s)     )   \nonumber     \\
  &\leq& - d(\law(\mu^{-1}(j) \given \tailt_{k+1} = (T, s)), \law(\mu^{-1}(k) \given \tailt_{k+1} = (T, s))).  
\end{eqnarray*}
This is true for any $m$ with $m_1(k) < k$ and $m_2(k) < k$ (and recall 
that the entropy difference is $0$ for all other $m$). Note also that the righthand side does not depend on $i$. 
Hence
\begin{eqnarray*}
& & \ent( \mu \nu_k \given M, \tailt_{k+1} = (T,s)) - \ent( \mu \nu_{k-1} \given M,  \tailt_{k+1} = (T,s))  \\
  &\leq&   \sum_{j=1}^{k-1} \one( M_2(k) = j, M_1(k) < k) d(\law(\mu^{-1}(j) \given \tailt_{k+1} = (T, s)), \law(\mu^{-1}(k) \given \tailt_{k+1} = (T, s))).  
\end{eqnarray*}
In the preceding equation, the expression in the top line is a random variable that is a function of $M$.
Taking expectations and using linearity of expectation gives
\begin{eqnarray*}
& & \e \Bigl( \ent( \mu \nu_k \given M, \tailt_{k+1} = (T,s)) - \ent( \mu \nu_{k-1} \given M,  \tailt_{k+1} = (T,s)) \Bigr)  \nonumber  \\
&\leq&   
  -\sum_{j=1}^{k-1} \p( M_2(k) = j, M_1(k) < k)  d(\law(\mu^{-1}(j) \given \tailt_{k+1} = (T,s), \law(\mu^{-1}(k) \given \tailt_{k+1} = (T,s))) .
\end{eqnarray*}
By assumption of the theorem, $\p( M_2(k) = j, M_1(k) < k) \geq {A_k \over k}$. Hence the sum is at most
\begin{eqnarray*}
    &  & -A_k \sum_{j=1}^{k-1} {1 \over k} d(\law(\mu^{-1}(j) \given \tailt_{k+1} = (T,s)), \law(\mu^{-1}(k) \given \tailt_{k+1} = (T,s)))   \nonumber   \\
&=&  -A_k \sum_{j=1}^{k} {1 \over k} d(\law(\mu^{-1}(j) \given \tailt_{k+1} = (T,s)), \law(\mu^{-1}(k) \given \tailt_{k+1} = (T,s))) .  
\end{eqnarray*}

Recall that for any fixed $q \geq 0$, the function $d(\cdot, q)$ is convex. Hence by Jensen's inequality the last line is at most
\begin{eqnarray}
& &  -A_k \cdot d \Bigl( {1 \over k} \sum_{j=1}^{k}  \law(\mu^{-1}(j) \given \tailt_{k+1} = (T,s)),
    \law(\mu^{-1}(k) \given \tailt_{k+1} = (T,s))\Bigr)  \nonumber \\
&=&  -A_k \cdot d \Bigl( \ut,
    \law(\mu^{-1}(k) \given \tailt_{k+1} = (T,s))\Bigr),  \label{av}
\end{eqnarray}
where $\ut$ denotes the uniform distribution over the values of
$\{1, \dots, n\}$ that are not in the tail $T$. 
We recall a lemma from \cite{morris} that relates two notions of a distance from the uniform distribution. 

\begin{lemma} \label{avlemma}
 If $\,\mathcal U$ is the uniform distribution on $V$ and $q$ is any distribution on $V$, then
\begin{align}
    d(q, \mathcal U) \geq \frac{C}{\log |V|}\ENT(q)
\end{align}
for a universal constant $C>0$.
\end{lemma}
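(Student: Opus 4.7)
The task is to prove $d(q, \mathcal{U}) \ge \frac{C}{\log N}\,\ENT(q)$ with $N = |V|$. I recognize $d$ as the Jensen--Shannon divergence (since $d(p,r) = \sum_i d(p_i, r_i)$ splits coordinatewise and each summand is a standard JS contribution) and $\ENT(q) = \log N - H(q)$ as the KL divergence $D(q\|\mathcal{U})$, so the lemma is a comparison $\mathrm{JS}(q,\mathcal{U}) \gtrsim \mathrm{KL}(q\|\mathcal{U})/\log N$.

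My plan is a pointwise reduction. Writing $a_i = N q_i \in [0, N]$, direct substitution gives $d(q_i, 1/N) = \psi(a_i)/N$ with $\psi(a) := \tfrac{a}{2}\log a - \tfrac{a+1}{2}\log\tfrac{a+1}{2}$, and $q_i \log(N q_i) = (a_i \log a_i)/N$. Elementary calculus shows $\psi(1) = \psi'(1) = 0$, $\psi''(a) = 1/(2a(a+1)) > 0$, and $\psi(a) \sim (a\log 2)/2$ as $a \to \infty$. I partition indices with $\delta := 1/\log N$ into light $\{a_i < 1\}$ (non-positive contribution to $\ENT$, discarded), near-uniform $\{1 \le a_i < 1+\delta\}$ (aggregate $\ENT$-contribution bounded by $2/\log N$, since each $a_i \log a_i \le 2\delta$ and there are at most $N$ of them), and truly heavy $\{a_i \ge 1+\delta\}$.

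For the truly heavy sites I would establish the pointwise bound $\psi(a) \ge C_1 (a\log a)/\log N$ on $[1+\delta, N]$. The function $a \mapsto \psi(a)/(a\log a)$ vanishes at $a = 1$, rises to a universal maximum, and decays to $0$ as $a \to \infty$ (since $\psi$ grows linearly while $a\log a$ grows super-linearly). Its minimum on $[1+\delta, N]$ is thus attained at an endpoint: at $a = 1+\delta$ it equals $\sim \delta/8 = 1/(8\log N)$, and at $a = N$ it equals $\sim \log 2/(2\log N)$, both $\ge C_1/\log N$ for a universal $C_1 > 0$. Summing yields $\sum_{V^+} (a_i \log a_i)/N \le (\log N/C_1)\,d(q,\mathcal{U})$, and combining with the near-uniform estimate gives $\ENT(q) \le 2/\log N + (\log N/C_1)\,d(q,\mathcal{U})$.

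When $\ENT(q) \ge 4/\log N$ the additive $2/\log N$ is absorbed into $\ENT(q)/2$ and the lemma follows. When $\ENT(q) < 4/\log N$, the local convexity $\psi(a) \ge c_0(a-1)^2$ near $a=1$ yields a Pinsker-type bound $d(q,\mathcal{U}) \gtrsim \chi^2(q,\mathcal{U})$, while $\ENT(q) \lesssim \chi^2(q,\mathcal{U})$ in the near-uniform regime, so $d \gtrsim \ENT$ with an absolute constant, which is far stronger than required. The main obstacle is precisely the degeneracy at $a = 1$: the pointwise ratio $\psi(a)\log N/(a\log a)$ vanishes there, so no single uniform pointwise comparison succeeds, and one must combine the three-zone partition with the two-regime split on $\ENT(q)$ to close the argument.
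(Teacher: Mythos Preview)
The paper does not supply a proof of this lemma; it is quoted from \cite{morris}. Your pointwise reduction via $\psi(a)$ is the right framework, and the heavy-zone estimate $\psi(a) \ge C_1 (a\log a)/\log N$ on $[1+\delta,N]$ is correct. The gap is in your small-$\ENT$ case. The step asserting that the local bound $\psi(a) \ge c_0(a-1)^2$ near $a=1$ yields $d(q,\mathcal U) \ge c\,\chi^2(q,\mathcal U)$ for a universal $c$ is false: the quadratic lower bound on $\psi$ holds only on a bounded interval, and $\ENT(q) < 4/\log N$ does not force all $a_i$ to lie there. Take $a_1 = \sqrt N$, set $a_2 = \dots = a_{\lfloor\sqrt N\rfloor} = 0$, and put the remaining $a_i = 1$. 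Then $\ENT(q) = \tfrac{1}{2}N^{-1/2}\log N < 4/\log N$ for large $N$, yet $\chi^2(q,\mathcal U) \to 1$ while $d(q,\mathcal U) \sim (\log 2)\,N^{-1/2}$, so $d/\chi^2 \to 0$ and $d/\ENT \sim (2\log 2)/\log N \to 0$. Thus the asserted conclusion that $d \ge c\,\ENT$ with an absolute constant fails in this regime, and your two-regime split does not close.

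The clean repair is to replace $a\log a$ by $\phi(a) := a\log a - a + 1$, which is nonnegative with $\phi(1) = \phi'(1) = 0$. Since $\sum_i a_i = N$, one has $\ENT(q) = \tfrac{1}{N}\sum_i \phi(a_i)$ exactly, so there are no negative light terms to discard and no degeneracy of the pointwise ratio at $a=1$. Now $6\psi''(a) - \phi''(a) = (2-a)/\bigl(a(a+1)\bigr) \ge 0$ on $[0,2]$, so $6\psi - \phi$ is convex there with a zero of order two at $a=1$, giving $\phi \le 6\psi$ on $[0,2]$. On $[2,N]$ one has $\psi(a) \ge c_1 a$ for a universal $c_1 > 0$ while $\phi(a) \le a\log N$, giving $\phi \le (c_1^{-1}\log N)\,\psi$. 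Summing the pointwise inequality $\phi(a_i) \le \max(6,\,c_1^{-1}\log N)\,\psi(a_i)$ over all $i$ yields $\ENT(q) \le (c_1^{-1}\log N)\, d(q,\mathcal U)$ directly, with no near-uniform zone and no case split on the size of $\ENT(q)$.
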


Lemma \ref{avlemma} implies that the quantity (\ref{av}) is at most
\begin{eqnarray*}
  & &  \frac{-CA_k}{\log k}\, \e(\ENT(\mu^{-1}(k) \given \tailt_{k+1} = (T,s)) )  .
\end{eqnarray*}
We have shown that for any choice of tail $T$ and sign $s$, we have
\begin{eqnarray*}
  & &  \e \Bigl( \ent( \mu \nu_k \given M, \tailt_{k+1} = (T,s)) - \ent( \mu \nu_{k-1} \given M,  \tailt_{k+1} = (T,s)) \Bigr)   \\
  &\leq& \frac{-CA_k}{\log k} \e(\ENT(\mu^{-1}(k) \given \tailt_{k+1} = (T,s)) )  .
\end{eqnarray*}
It follows that
\begin{eqnarray*}
  & &  \e \Bigl( \ent( \mu \nu_k \given M, \tailt_{k+1}) - \ent( \mu \nu_{k-1} \given M,  \tailt_{k+1}) \Bigr)   \\
  &\leq& \frac{-CA_k}{\log k} \e(\ENT(\mu^{-1}(k) \given \tailt_{k+1}) )  ,
\end{eqnarray*}
which verifies equation (\ref{prel}) and hence proves the lemma.

\end{proof}

\section{The Torus Shuffle}
\label{fitsin}

In this section we will show how the torus shuffle
fits into the setup described in the previous section. 
In order to use Theorem \ref{maintheorem} it will be helpful to label the positions of the tiles so that
tiles that are close in the grid have a similar label. To this end, we label the positions 
so that each $\ell \times \ell$ square $B_\ell$ in the lower left part of the grid
contains the labels $1, \dots, \ell^2$. Any method of labeling with this property will suit our purposes.
(Figure \ref{fig:grid_labeling} below shows one such labeling.) 
\begin{figure}[!h]
  \centering
\begin{tikzpicture}
\draw[step=0.5cm,color=gray] (-1,-1) grid (1,1);
\node at (-0.75,+0.75) {10};
\node at (-0.25,+0.75) {11};
\node at (+0.25,+0.75) {12};
\node at (+0.75,+0.75) {13};
\node at (-0.75,+0.25) {5};
\node at (-0.25,+0.25) {6};
\node at (+0.25,+0.25) {7};
\node at (+0.75,+0.25) {14};
\node at (-0.75,-0.25) {2};
\node at (-0.25,-0.25) {3};
\node at (+0.25,-0.25) {8};
\node at (+0.75,-0.25) {15};
\node at (-0.75,-0.75) {1};
\node at (-0.25,-0.75) {4};
\node at (+0.25,-0.75) {9};
\node at (+0.75,-0.75) {16};
\end{tikzpicture}
  \caption{Labeling the grid in the case $n=4$.}
  \label{fig:grid_labeling}
\end{figure}

Recall that the torus
shuffle has the following transition rule: at each step, we select a row or column, uniformly at random from all $2n$ possibilities,
and then cyclically rotate it by one unit in a random direction.
To avoid periodicity, we add a holding probaility of $\half$. That is, with probability $\half$, do nothing; otherwise do a move as described above.
We shall refer to the steps when the chain is idle as {\it lazy steps.} Henceforth the term {\it torus shuffle}
shall refer to the lazy version of the chain. 

A key observation is that if a row move $r$ is followed by a column move $c$,
the result is the same as $c$ followed by $r$, followed by a
certain $3$-cycle. That is, $r^{-1}c^{-1}rc = \Gamma$, where $\Gamma$ is a $3$-cycle.
(Recall that the shuffles are written in left-to-right order.)
Thus, the Markov chain that does two steps
of the torus shuffle at a time can be simulated as follows. If the two steps are both row moves or both column moves or contain lazy steps, perform them as usual.
Otherwise, say the moves are the row move $r$ and the column move $c$, in some order. In this case:
\begin{enumerate}
\item Perform $r$ and then $c$; then 

\item  Perfom the $3$-cycle $\Gamma = r^{-1}c^{-1} rc$ with probability $\half$.
\end{enumerate}

Since Step 2 above is a $3$-collision, this process is $3$-Monte. See Figure \ref{fig:3-collision} for an illustration.

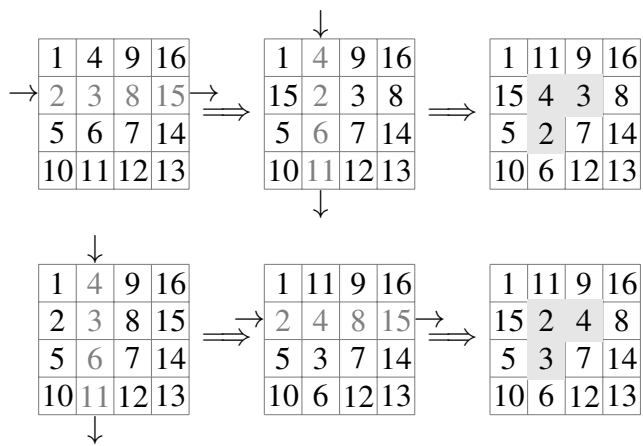
\begin{figure}[!h]
  \centering
\begin{tikzpicture}
\draw[step=0.5cm,color=gray] (-1,-1) grid (1,1);
\node at (-0.75,+0.75) {1};
\node at (-0.25,+0.75) {4};
\node at (+0.25,+0.75) {9};
\node at (+0.75,+0.75) {16};
\node[step=0.5cm,color=gray] at (-0.75,+0.25) {2};
\node[step=0.5cm,color=gray] at (-0.25,+0.25) {3};
\node[step=0.5cm,color=gray] at (+0.25,+0.25) {8};
\node[step=0.5cm,color=gray] at (+0.75,+0.25) {15};
\node at (-0.75,-0.25) {5};
\node at (-0.25,-0.25) {6};
\node at (+0.25,-0.25) {7};
\node at (+0.75,-0.25) {14};
\node at (-0.75,-0.75) {10};
\node at (-0.25,-0.75) {11};
\node at (+0.25,-0.75) {12};
\node at (+0.75,-0.75) {13};
\node at (-1.20,+0.25) {$\to$};
\node at (+1.20,+0.25) {$\to$};
\node at (+1.5, 0) {$\implies$};
\draw[step=0.5cm,color=gray] (2,-1) -- (2,1);
\draw[step=0.5cm,color=gray] (2,-1) grid (4,1);
\node at (+2.25,+0.75) {1};
\node[step=0.5cm,color=gray][step=0.5cm,color=gray][step=0.5cm,color=gray] at (+2.75,+0.75) {4};
\node at (+3.25,+0.75) {9};
\node at (+3.75,+0.75) {16};
\node at (+2.25,+0.25) {15};
\node[step=0.5cm,color=gray][step=0.5cm,color=gray][step=0.5cm,color=gray] at (+2.75,+0.25) {2};
\node at (+3.25,+0.25) {3};
\node at (+3.75,+0.25) {8};
\node at (+2.25,-0.25) {5};
\node[step=0.5cm,color=gray][step=0.5cm,color=gray][step=0.5cm,color=gray] at (+2.75,-0.25) {6};
\node at (+3.25,-0.25) {7};
\node at (+3.75,-0.25) {14};
\node at (+2.25,-0.75) {10};
\node[step=0.5cm,color=gray][step=0.5cm,color=gray][step=0.5cm,color=gray] at (+2.75,-0.75) {11};
\node at (+3.25,-0.75) {12};
\node at (+3.75,-0.75) {13};
\node at (+2.75,+1.20) {$\downarrow$};
\node at (+2.75,-1.20) {$\downarrow$};
\node at (+4.5, 0) {$\implies$};
\draw[step=0.5cm,color=gray] (5,-1) -- (5,1);
\draw[step=0.5cm,color=gray] (5,-1) grid (7,1);
\node at (+5.25,+0.75) {1};
\node at (+5.75,+0.75) {11};
\node at (+6.25,+0.75) {9};
\node at (+6.75,+0.75) {16};
\node at (+5.25,+0.25) {15};
\node[fill=gray!20] at (+5.75,+0.25) {4};
\node[fill=gray!20] at (+6.25,+0.25) {3};
\node at (+6.75,+0.25) {8};
\node at (+5.25,-0.25) {5};
\node[fill=gray!20] at (+5.75,-0.25) {2};
\node at (+6.25,-0.25) {7};
\node at (+6.75,-0.25) {14};
\node at (+5.25,-0.75) {10};
\node at (+5.75,-0.75) {6};
\node at (+6.25,-0.75) {12};
\node at (+6.75,-0.75) {13};
\draw[step=0.5cm,color=gray] (-1,-4) grid (1,-2);
\node at (-0.75,-2.25) {1};
\node[step=0.5cm,color=gray][step=0.5cm,color=gray] at (-0.25,-2.25) {4};
\node at (+0.25,-2.25) {9};
\node at (+0.75,-2.25) {16};
\node at (-0.75,-2.75) {2};
\node[step=0.5cm,color=gray][step=0.5cm,color=gray] at (-0.25,-2.75) {3};
\node at (+0.25,-2.75) {8};
\node at (+0.75,-2.75) {15};
\node at (-0.75,-3.25) {5};
\node[step=0.5cm,color=gray][step=0.5cm,color=gray] at (-0.25,-3.25) {6};
\node at (+0.25,-3.25) {7};
\node at (+0.75,-3.25) {14};
\node at (-0.75,-3.75) {10};
\node[step=0.5cm,color=gray][step=0.5cm,color=gray] at (-0.25,-3.75) {11};
\node at (+0.25,-3.75) {12};
\node at (+0.75,-3.75) {13};
\node at (-0.25,-1.80) {$\downarrow$};
\node at (-0.25,-4.20) {$\downarrow$};
\node at (+1.50,-3.00) {$\implies$};
\draw[step=0.5cm,color=gray] (2,-4) -- (2,-2);
\draw[step=0.5cm,color=gray] (2,-4) grid (4,-2);
\node at (+2.25,-2.25) {1};
\node at (+2.75,-2.25) {11};
\node at (+3.25,-2.25) {9};
\node at (+3.75,-2.25) {16};
\node[step=0.5cm,color=gray] at (+2.25,-2.75) {2};
\node[step=0.5cm,color=gray] at (+2.75,-2.75) {4};
\node[step=0.5cm,color=gray] at (+3.25,-2.75) {8};
\node[step=0.5cm,color=gray] at (+3.75,-2.75) {15};
\node at (+2.25,-3.25) {5};
\node at (+2.75,-3.25) {3};
\node at (+3.25,-3.25) {7};
\node at (+3.75,-3.25) {14};
\node at (+2.25,-3.75) {10};
\node at (+2.75,-3.75) {6};
\node at (+3.25,-3.75) {12};
\node at (+3.75,-3.75) {13};
\node at (+1.80,-2.75) {$\rightarrow$};
\node at (+4.20,-2.75) {$\rightarrow$};
\node at (+4.5, -3) {$\implies$};
\draw[step=0.5cm,color=gray] (5,-4) -- (5,-2);
\draw[step=0.5cm,color=gray] (5,-4) grid (7,-2);
\node at (+5.25,-2.25) {1};
\node at (+5.75,-2.25) {11};
\node at (+6.25,-2.25) {9};
\node at (+6.75,-2.25) {16};
\node at (+5.25,-2.75) {15};
\node[fill=gray!20] at (+5.75,-2.75) {2};
\node[fill=gray!20] at (+6.25,-2.75) {4};
\node at (+6.75,-2.75) {8};
\node at (+5.25,-3.25) {5};
\node[fill=gray!20] at (+5.75,-3.25) {3};
\node at (+6.25,-3.25) {7};
\node at (+6.75,-3.25) {14};
\node at (+5.25,-3.75) {10};
\node at (+5.75,-3.75) {6};
\node at (+6.25,-3.75) {12};
\node at (+6.75,-3.75) {13};
\end{tikzpicture}
  \caption{A possible 3-collision. The first sequence shows the 2nd row shifted right, then the 2nd column shifted down. The second sequence shows the 2nd column shifted down, then the 2nd row shifted right. Note how only the shaded tiles differ in the end result by a 3-cycle.}
  \label{fig:3-collision}
\end{figure}

Note that the $3$-cycle $\Gamma$ described in Step 2 above involves $3$ adjacent squares in the shape of a gamma.
(The gamma shape could be inverted or rotated depending on the directions of the row and column moves.)
The ``middle'' square of the gamma
is where the row and column intersect and can be anywhere on the board.

\subsection{Communicating classes}
The communicating classes of the torus shuffle Markov chain were
determined by Amano et al \cite{solve}, who were motivated by the torus puzzle.  
In the torus puzzle, the goal is to put the tiles in order starting from some specified starting state.
Amano et al investigated the question of
which configurations are reachable from
the identity permutation, that is, which configurations in the torus puzzle
are solvable. 
They show that if $n$ is even then all states are reachable, which implies that the torus shuffle Markov chain
is irreducible in this case.
When $n$ is odd, they show that there are two communicating classes, namely the set of even permutations and the
set of odd permutations. (Note that when $n$ is odd, each move of the chain is a cycle of odd length, hence an even permutation.)
In the case where $n$ is odd we shall assume that the starting state is
equally likely to be an odd or even permutation to ensure that the chain still
converges to the uniform distribution. 

\section{The 3-stage procedure}
\label{threestage}

Theorem \ref{maintheorem} reduces bounding the mixing time of a card shuffle to studying the
behavior of triplets of cards. Recall that in the context of the torus shuffle, we call the cards
{\it tiles}. Thus tile $i$ refers to the tile initially in position $i$. 
The following is our main technical lemma. 

\begin{lemma}
  \label{mainlemma}
  Suppose $n \geq 4$ and consider the torus shuffle on an $n \times n$ grid.
Fix $l \geq 2$. 
  There exist universal constants
  $C>0$ and $D>0$ such that if
  $T'$ is the smallest even integer that is at least $C\ell^2 n$,
  then for any triplets of positions
  $(i, j, k)$ and $(i', j', k')$ in an $\ell \times \ell$ box $B_l$ within
  the $n \times n$ grid, the probability that tile $i$ travels to position $i'$, tile $j$ travels
  to position $j'$,
  and tile $k$ travels to position $k'$ in $T'$ steps is at least $\frac{D}{\ell^6}$.
\end{lemma}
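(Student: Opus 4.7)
The plan is to establish a joint local central limit theorem for the positions of the three tiles at time $T'$ and use it to lower-bound the probability of the event in question. The underlying base fact is that, in isolation, a single tile under the torus shuffle performs a lazy random walk on the torus $\mathbb{Z}_n \times \mathbb{Z}_n$ whose step rate is $\Theta(1/n)$: at each shuffle the tile moves by one unit in a coordinate direction with probability $\Theta(1/n)$ and otherwise stays put. Over $T' = \Theta(\ell^2 n)$ shuffles the tile therefore takes $\Theta(\ell^2)$ random-walk steps, and a standard local CLT on $\mathbb{Z}_n^2$ (obtained by Fourier analysis on the discrete torus, or equivalently as a heat-kernel lower bound) gives that the probability the tile occupies any prescribed position within distance $O(\ell)$ of its starting position is $\Omega(1/\ell^2)$.

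I would implement the argument in three consecutive stages of length $T_1 = T_2 = T_3 = \lfloor T'/3\rfloor$. In Stage $s$ (for $s=1,2,3$) I would focus on one of the three tiles (namely the one destined for $i'$, $j'$, or $k'$), apply the single-tile local CLT within the time window of that stage to bound by $\Omega(1/\ell^2)$ the probability that this tile reaches its target, and exploit the fact that all three tiles stay within distance $O(\ell)$ of their starting positions with probability $\Omega(1)$ so that the local-CLT geometry remains valid. Composing the three stages, one per tile, yields a factor of $\Omega(1/\ell^2)$ per tile, for the desired total of $\Omega(1/\ell^6)$.

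The main obstacle is that the three tiles are \emph{not} three independent random walks: whenever two of them share a row or column, a single row/column move displaces both tiles together, so a direct product of marginal CLT estimates does not immediately lower-bound the joint probability. I would address this by proving a joint local CLT for the $6$-dimensional position vector of the three tiles on $(\mathbb{Z}_n \times \mathbb{Z}_n)^3$, viewed as a random walk whose per-step covariance is non-degenerate whenever the tiles occupy three distinct rows and three distinct columns. Since the tiles occupy at most three rows and three columns of the $n \times n$ grid, the probability that a given shuffle step produces a correlated increment is only $O(1/n)$, and out of the $\Theta(\ell^2)$ effective tile moves per stage at most an $O(1/\ell)$ fraction fall in configurations where two tiles are collinear, contributing only $O(\ell)$ truly coupled increments. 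This perturbation leaves the joint $6 \times 6$ covariance uniformly non-degenerate after $T'$ steps, so the joint local CLT applies and yields the density bound $\Omega(1/\ell^6)$ at every configuration in $B_\ell \times B_\ell \times B_\ell$, completing the proof with $D$ a universal constant extracted from the LCLT constants.
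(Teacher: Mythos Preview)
Your three-stage plan in the second paragraph does not work as stated. If in Stage~1 you use the single-tile local CLT to place tile $i$ at $i'$, that tile continues to perform $\Theta(\ell^2)$ random-walk steps during Stages~2 and~3 and will with overwhelming probability no longer be at $i'$ at time $T'$. ``Focusing on one tile per stage'' and multiplying three $\Omega(1/\ell^2)$ factors therefore bounds the wrong event; you need all three tiles at their targets \emph{simultaneously} at the terminal time, and nothing in your decomposition controls the already-placed tiles during later stages.

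Your third paragraph abandons the staging and instead proposes a direct joint local CLT for the $6$-dimensional process. This is closer in spirit to a valid argument, and your interference count (expected $O(\ell)$ steps with two tiles collinear, against $\Theta(\ell^2)$ effective moves) is essentially correct and is in fact what the paper establishes via a martingale argument. But you have not said how to turn ``the covariance is non-degenerate up to lower order'' into a pointwise $\Omega(\ell^{-6})$ lower bound on transition probabilities. The joint walk is not a random walk on a group (the increment law depends on whether tiles currently share a row or column), so Fourier-analytic LCLTs do not apply off the shelf; and the starting triple $(i,j,k)$ may well have all three tiles in a single row, so you cannot assume the initial configuration is ``generic''.

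The paper's approach sidesteps the joint LCLT entirely. Its three stages are organized by \emph{geometry}, not by tile: Stage~1 (length $\Theta(\ell^2 n)$) drives the three tiles into three small sub-boxes $\mathcal{B}_i,\mathcal{B}_j,\mathcal{B}_k$ that are pairwise far apart both horizontally and vertically; this succeeds with probability bounded away from~$0$ by the ordinary (non-local) CLT plus the $O(\ell)$ interference bound you mention. In Stage~2 the tiles are now in disjoint rows and disjoint columns and remain so with high probability, so they evolve as three genuinely independent lazy walks, and six applications of the one-dimensional local CLT give probability $\Omega(\ell^{-6})$ of hitting three prescribed points inside the sub-boxes. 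Stage~3 is Stage~1 run in reverse (using reversibility) to carry those three points to $(i',j',k')$, again with probability bounded away from~$0$. The product is $\Omega(\ell^{-6})$. The key idea you are missing is this intermediate \emph{separation} step: rather than proving a joint LCLT in the presence of interference, first move the tiles far apart so that the interference disappears and only standard one-dimensional LCLTs are needed.
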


Lemma \ref{mainlemma} is a kind of crude
local central limit theorem for tiles in the torus shuffle. The main complication
in the proof is that the tiles don't move independently when they are in the same row or column. The way we handle this
is to define three small boxes $\boxi$, $\boxj$ and $\boxk$ that are far away from each other,
both horizontally and vertically. These boxes have side lengths that are a small constant times $\ell$.
Then we divide the time steps into three ``stages,'' and we lower bound the probability that 
\begin{enumerate}
\item In Stage 1, tiles $i$, $j$, and $k$ move to the boxes $\boxi$, $\boxj$ and $\boxk$, respectively. 
\item In Stage 2 (with the tiles now moving independently), each tile moves to the ``correct spot''
      in its box. 
\item In Stage 3, the tiles that move to positions $i'$, $j'$, and $k'$ come from
      boxes $\boxi$, $\boxj$ and $\boxk$, respectively.
\end{enumerate}
In Stage $2$, ``correct spot'' means the exact spot that will go to the correct position in Stage 3. \\
~\\
The 3-stage procedure is illustrated in Figure \ref{fig:3_part_process} 
\begin{figure}[!h]
  \centering
\begin{tikzpicture}
\draw (-5,0) rectangle (-1,4);
\draw (0,0) rectangle (4,4);
\draw (5,0) rectangle (9,4);
\draw (4/9-5, 28/9) rectangle (8/9-5, 32/9);
\draw (16/9-5, 16/9) rectangle (20/9-5, 20/9);
\draw (28/9-5, 4/9) rectangle (32/9-5, 8/9);
\draw (4/9, 28/9) rectangle (8/9, 32/9);
\draw (16/9, 16/9) rectangle (20/9, 20/9);
\draw (28/9, 4/9) rectangle (32/9, 8/9);
\draw (4/9+5, 28/9) rectangle (8/9+5, 32/9);
\draw (16/9+5, 16/9) rectangle (20/9+5, 20/9);
\draw (28/9+5, 4/9) rectangle (32/9+5, 8/9);
\node at (10/9-5, 33/9){$\mathcal B_i$};
\node at (22/9-5, 21/9){$\mathcal B_j$};
\node at (34/9-5, 9/9){$\mathcal B_k$};
\node at (4/9-5, 9/9){.};
\node at (2/9-5, 9/9){$i$};
\node at (7/9-5, 7/9){.};
\node at (5/9-5, 7/9){$j$};
\node at (10/9-5, 5/9){.};
\node at (8/9-5, 5/9){$k$};
\node at (26/9+5, 30/9){.};
\node at (27/9+5, 32/9){$i'$};
\node at (28/9+5, 26/9){.};
\node at (29/9+5, 28/9){$j'$};
\node at (32/9+5, 20/9){.};
\node at (33/9+5, 22/9){$k'$};
\node at (18/9-2.5, 18/9){$\implies$};
\node at (18/9+2.5, 18/9){$\implies$};
\draw[->] (4/9-5, 9/9) -- (5/9-5, 30/9);
\draw[->] (7/9-5, 7/9) -- (17/9-5, 17/9);
\draw[->] (10/9-5, 5/9) -- (29/9-5, 6/9);
\draw[->] (5/9, 30/9) .. controls (8/9, 28/9) .. (7/9, 31/9);
\draw[->] (17/9, 17/9) .. controls (18/9, 19/9) .. (19/9, 18/9);
\draw[->] (29/9, 6/9) .. controls (28/9, 7/9) .. (30/9, 5/9);
\draw[->] (7/9+5, 31/9) -- (26/9+5, 30/9);
\draw[->] (19/9+5, 18/9) -- (28/9+5, 26/9);
\draw[->] (30/9+5, 5/9) -- (32/9+5, 20/9);
\end{tikzpicture}
  \caption{The three-stage procedure.}
  \label{fig:3_part_process}
\end{figure}
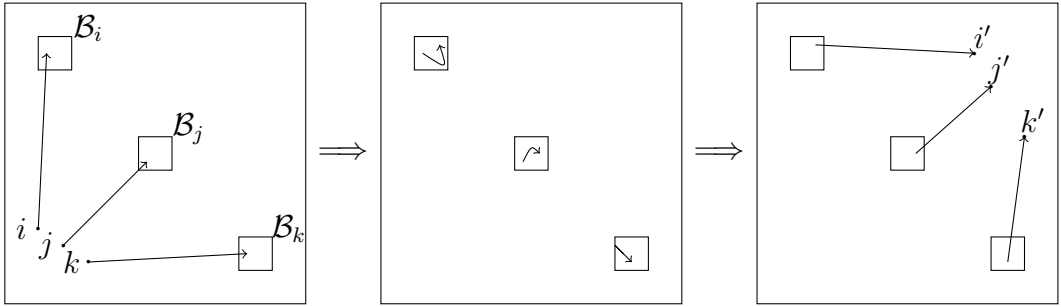

We will show that Stages 1 and 3 are completed successfully with probability
bounded away from $0$ and Stage 2 is completed successfully with probability $\bigomega(\ell^{-6})$. \\
\\
{\bf Method of Proof:}
For Stages 1 and 3, we approximate the motion of the tiles by three
independent random walks and then apply the central
limit theorem. (Note that the number of steps is chosen so that the
standard deviation of a tile's position is on the order of $l$.)
There is some error introduced by ``interference,''
that is, the non-independent moves that occur when two tiles are in the same
row or column. We handle this by  showing
that the size of this error is small compared
to the length of the boxes. This amounts to bounding the number of moves
where two tiles are in the same row or column, which we accomplish
using some martingale arguments. 

  For Stage 2, we need to bound the probability that the tiles
go to three specific positions in their boxes. For this we use
three applications of the local central limit theorem, which we can do because
the tiles are all now far away from each other (assuming the success of
Stages 1 and 3) and hence moving independently. 

\begin{proof}[Proof of Lemma \ref{mainlemma}]
First we will show that the lemma holds when for $l$ sufficiently large. 
  Consider the $\ell \times \ell$ box $B_\ell$ whose lower left corner is at position $(0,0)$ and
  whose upper right corner is at $(\ell, \ell)$.
Fix a positive integer $K$ large enough so that
$e^{-{1 \over 3}(2K-1)^2} \leq {1 \over 2 e \sqrt{3}}$,
and suppose that $c$ and $\delta$ are constants small enough so that
\[
  cK < {1 \over 3}; \;\;\;\;\; \delta < {c \over 2} \,.
\]
(The constant $K$ won't be used until the analysis of Stage 2.)
For simplicity, we shall assume hereafter that $c \ell$ is an even integer.
This does not change the analysis significantly. 
  Let $\boxi$, $\boxj$ and $\boxk$ be  boxes (i.e., squares) of side length
  $cl$ centered at $( {\ell \over  6}, {5l \over 6})$, $({\ell \over  2}, {l \over 2})$, and
  $( {5\ell \over  6}, {l \over 6})$, respectively.  
  Let $\li$, $\lj$ and $\lk$ be  boxes of side length
  $cl - 2 \delta l$, (also) centered at $({\ell \over  6}, {5l \over 6})$,
  $( {\ell \over  2}, {l \over 2})$, and
  $( {5\ell \over  6}, {l \over 6})$, respectively.  
  The boxes $\li$, $\lj$ and $\lk$ are slightly smaller and fit inside $\boxi$, $\boxj$ and $\boxk$, respectively.
  (See Figure \ref{fig:3-part setup}.)

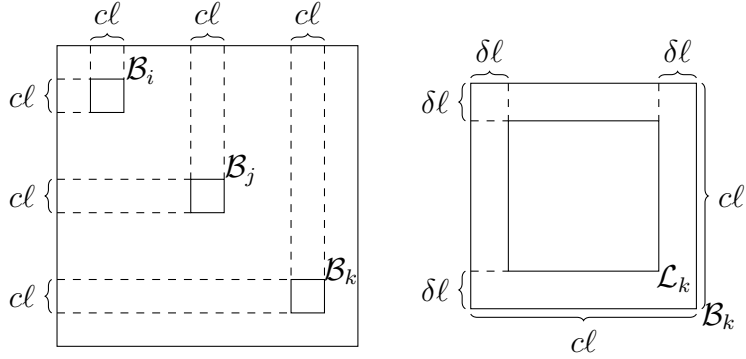
\begin{figure}[!h]
  \centering
\begin{tikzpicture}
\draw (-5,0) rectangle (-1,4);
\draw (4/9-5, 28/9) rectangle (8/9-5, 32/9);
\draw (16/9-5, 16/9) rectangle (20/9-5, 20/9);
\draw (28/9-5, 4/9) rectangle (32/9-5, 8/9);
\node at (10/9-5, 33/9){$\mathcal B_i$};
\node at (22/9-5, 21/9){$\mathcal B_j$};
\node at (34/9-5, 9/9){$\mathcal B_k$};
\draw[decoration={brace,raise=2pt},decorate] 
    (4/9-5,4) -- node[above=4pt] {$c\ell$} (8/9-5,4);
\draw[decoration={brace,raise=2pt},decorate]     (16/9-5,4) -- node[above=4pt] {$c\ell$} (20/9-5,4);
\draw[decoration={brace,raise=2pt},decorate]     (28/9-5,4) -- node[above=4pt] {$c\ell$} (32/9-5,4);
\draw[decoration={brace,raise=2pt},decorate]     (-5,4/9) -- node[left=4pt] {$c\ell$} (-5,8/9);
\draw[decoration={brace,raise=2pt},decorate]     (-5,16/9) -- node[left=4pt] {$c\ell$} (-5,20/9);
\draw[decoration={brace,raise=2pt},decorate]    (-5,28/9) -- node[left=4pt] {$c\ell$} (-5,32/9);
\draw [dashed] (-5,32/9) -- (4/9-5,32/9);
\draw [dashed] (-5,28/9) -- (4/9-5,28/9);
\draw [dashed] (-5,20/9) -- (16/9-5,20/9);
\draw [dashed] (-5,16/9) -- (16/9-5,16/9);
\draw [dashed] (-5,8/9) -- (28/9-5,8/9);
\draw [dashed] (-5,4/9) -- (28/9-5,4/9);
\draw [dashed] (4/9-5,4) -- (4/9-5,32/9);
\draw [dashed] (8/9-5,4) -- (8/9-5,32/9);
\draw [dashed] (16/9-5,4) -- (16/9-5,20/9);
\draw [dashed] (20/9-5,4) -- (20/9-5,20/9);
\draw [dashed] (28/9-5,4) -- (28/9-5,8/9);
\draw [dashed] (32/9-5,4) -- (32/9-5,8/9);
\draw (0.5,0.5) rectangle (3.5,3.5);
\node at (3.8, 0.4){$\mathcal B_k$};
\draw (1,1) rectangle (3,3);
\node at (3.2, 0.9){$\mathcal L_k$};
\draw[decoration={brace,mirror,raise=2pt},decorate] 
    (0.5,0.5) -- node[below=4pt] {$c\ell$} (3.5,0.5);
\draw[decoration={brace,mirror,raise=2pt},decorate] 
    (3.5,0.5) -- node[right=4pt] {$c\ell$} (3.5,3.5);
\draw[decoration={brace,raise=2pt},decorate] 
    (0.5,3.5) -- node[above=4pt] {$\delta\ell$} (1,3.5);
\draw[decoration={brace,raise=2pt},decorate] 
    (3,3.5) -- node[above=4pt] {$\delta\ell$} (3.5,3.5);
\draw[decoration={brace,raise=2pt},decorate] 
    (0.5,3) -- node[left=4pt] {$\delta\ell$} (0.5,3.5);
\draw[decoration={brace,raise=2pt},decorate] 
    (0.5,0.5) -- node[left=4pt] {$\delta\ell$} (0.5,1);
\draw [dashed] (0.5, 3) -- (1,3);
\draw [dashed] (0.5, 1) -- (1,1);
\draw [dashed] (1, 3.5) -- (1,3);
\draw [dashed] (3, 3.5) -- (3,3);
\end{tikzpicture}
  \caption{The boxes involved in the 3-Stage procedure.}
  \label{fig:3-part setup}
\end{figure}
We shall run the torus shuffle for $2 \ell^2 n$ steps in Stages 1 and 3,
and for $c^2 \ell^2 n$ steps in Stage 2. \\
~\\
{\bf Stage 1 Analysis:} \\
~\\
For the first stage, we wish to show that if $\ell$ is sufficiently large,
then with probability bounded away from $0$, after $2\ell^2n$ steps the tiles have moved to their respective boxes.
Note that when a tile moves it moves
  like a simple random walk in the torus. In fact, we can construct the torus shuffle using a
  collection of random walks as follows. 
  For every tile $\tilet$,
  let $\{m_q^\tilet\}_{q=1}^{\infty}$ be a sequence of random walk increments;
  the $m_q^\tilet$ are independent and identically distributed
uniformly over $\{(\pm 1, 0), (0, \pm 1)\}$. Then at each step, move the tiles in the grid in the following way:
\begin{enumerate}
    \item Flip a coin (to decide whether to be lazy). If the coin lands heads, then continue with steps 2 and 3 below. Otherwise, do nothing.
    \item Choose $n$ tiles such that no two share the same row or column. Call these  {\it master} tiles.
    \item Choose a master tile uniformly at random; call it $\widehat{\tilet}$.
      Suppose $\widehat{\tilet}$ has already moved $q-1$ times. Move $\widehat{\tilet}$ according to $m_q^{\widehat{\tilet}}$
and cyclically rotate the other tiles as necessary.  
\end{enumerate}
This is equivalent to the torus shuffle: the algorithm chooses each possible move of the torus shuffle with probability $\sfrac{1}{4n}$, independently
of the previous moves. Note that when performing Step 3, the number of times tile $\tilehat$
has moved might not equal the number of times it has been a master tile.  
We shall use the following method of selecting master tiles. 

\begin{enumerate}
\item Select $i$ as master tile;
    \item If $j$ is not in the same row or column as $i$, select $j$ as a master tile;
    \item If $k$ is not in the same row or column $i$ or $j$, select $k$ to be a master tile; 
    \item Choose remaining master tiles arbitrarily.
\end{enumerate}
This choice of master tiles ensures that
\begin{itemize}
\item Tile $i$ moves at exactly the times when it is a master tile; moreover, tile $i$ always follows its random walk sequence $\{m_q^i\}$.
\item Tiles $j$ and $k$ \textit{mostly} follow their random walk sequences;
  any time tile $j$ does not follow its random walk sequence it must be in the same row or column as tile $i$
  and any time $k$ does not follow its random walk sequence it must be in the same row or column as tile $i$ or tile $j$. 
\end{itemize}

When tiles $j$ or $k$ do not follow their
random walk sequences we call it \textit{interference}; Sometimes, $i$ interferes with $j$ and sometimes $i$ or $j$ interfere with $k$.
Our aim will be to show that the number of steps with interference is small, and hence the motion of the tiles is well-approximated by independent random
walks.
Let $X_t$ be the torus shuffle.
The idea will be to define processes $Y_t$ and $W$
that 
are successively more ``independent versions'' of the torus shuffle.
(More precisely, $Y_t$ and $W$ are processes that
model the motions of tiles $i$, $j$ and $k$ in the torus shuffle;
the other tiles are not relevant.)
Then we compare $X_t$ with $Y_t$ and then $Y_t$ with $W$.
In process $W$ the motions of the tiles are completely independent.

Let $X_t({\tilet})$ be the position of tile ${\tilet}$ at time $t$ of the torus shuffle.
For $\tilet \in \{i, j, k\}$, let
$\nu_\tilet$ be the number of times tile $\tilet$ moves after $2\ell^2n$ steps of the torus shuffle.
Then 
\begin{align}
    X_{ 2\ell^2 n}(i) = X_{2\ell^2 n}(i) = X_0(i) + \sum_{s=1}^{\nu_i}m_s^i,
\end{align}
and since we expect minimal interference, we also expect that 
\begin{align}
    X_{ 2\ell^2 n}(j) = X_{2\ell^2 n}(j) &\approx X_0(j) + \sum_{s=1}^{\nu_j}m_s^j; \\
    X_{ 2\ell^2 n}(k) = X_{2\ell^2 n}(k) &\approx X_0(k) + \sum_{s=1}^{\nu_k}m_s^k.
\end{align}

To make this more precise, we introduce what we call the \textit{obilivious process}, which we shall denote by $Y_t$. 
In the oblivous process,
tiles $i$, $j$ and $k$ start in the same positions as in the torus shuffle and
move at the same times, but the paths of the tiles are independent
random walks. 

More precisely, 
for $\tilet \in \{i, j, k\}$, let $Y_{ 0}({\tilet}) = X_{ 0}({\tilet})$, and for all times $t \geq 0$,
tile $\tilet$ moves in the $Y_t$ process if and only if it moves in the torus shuffle and it moves with the following rule:
\begin{itemize}
\item Suppose tile $\tilet$ has already moved $q-1$ times. Then tile $\tilet$ moves in the direction of $m^{\tilet}_q$,
  regardless of whether it is a master tile or not. 
\end{itemize}
Note that when $\tilet$ is a master tile, it moves in the same direction as in the torus shuffle. When it
is not a master tile, it moves in a direction that is independent of the move in the torus shuffle. 
Suppose that at time $t$, tile ${\tilet}$ has moved $r$ times. Then  
\begin{align}
    Y_{ t}({\tilet}) = X_0({\tilet}) + \sum_{s=1}^{r}m_s^{\tilet},
\end{align}
and hence
\begin{align}
Y_{ 2\ell^2 n}(i) &= X_0(i) + \sum_{s=1}^{\nu_i}m_s^i; \\
Y_{ 2\ell^2 n}(j) &= X_0(j) + \sum_{s=1}^{\nu_j}m_s^j; \\
Y_{ 2\ell^2 n}(k) &= X_0(k) + \sum_{s=1}^{\nu_k}m_s^k. \\
\end{align}
(Recall that $\nu_\tilet$ is the number of times tile $\tilet$ moves after $2\ell^2n$ steps of the torus shuffle.)

Note that the trajectories of the tiles in the $Y_t$ process are not quite independent, because whether a certain tile moves
in a given step is not independent of whether the others do.
(The random variables $\nu_i$, $\nu_j$ and $\nu_k$ each have the
Binomial($2 \ell^2 n, \frac{1}{2n}$) distribution, but they are not independent.) To address this slight dependence in the oblivious
process,
we introduce a third process $W$ that moves each tile exactly $\ell^2$ steps. Then we argue that
$W$ is likely a good approximation to the oblivious process $Y_t$, because each tile $\tilet$'s random walk is unlikely to
travel very far in the steps between $\nu_\tilet$ and $\ell^2$. More precisely, note that each $\nu_\tilet$
is approximately Poisson($\ell^2$). Hence $\nu_\tilet - \ell^2$ is likely to be on the order of $\ell$,
so the difference between tile $\tilet$'s position in $Y_t$ and $W$ is likely to be on the
order of $\sqrt{\ell}$, by Doob's maximal inequality. 

More precisely, 
for tiles ${\tilet} \in \{i,j,k\}$, let
\begin{align}
    W({\tilet}) = X_0({\tilet}) + \sum_{s=1}^{\ell^2} m_s^{\tilet} .
\end{align}

We shall call $W$ the {\it idealized  process}.
(But note that $W$ is not really a ``process,'' because there is no time dependence.
It is simply the result of three independent random walks, run for $\ell^2$ steps each.)

Recall that our aim is to show that with probability bounded away from zero,
for each ${\tilet} \in \{i, j, k\}$ we have that
$X_{ 2\ell^2 n}({\tilet})$ is in $\mathcal B_{\tilet}$. 
To accomplish this, we first show that with probability bounded away from $0$,
the idealized process $W$ puts each tile $\tilet$ inside
the smaller box $\mathcal L_{\tilet}$
within each target box. (The side lengths of the smaller boxes are $2 \delta l$ less than the target
boxes.) 
Then we argue that the error 
in  approximating the torus shuffle by $Y_t$, and
the error in approximating 
$Y_t$ by $W_t$, 
are both unlikely to exceed $\half \delta \ell$. (Here, by ``error'' we mean the maximum
$l^2$-distance between a tile's position in one process and its position in the other.)
By the triangle inequality, this will imply that with high probability each tile
lands in its target box. (See Figure \ref{fig:3-part stage 1} for an illustration.)

\begin{figure}[!h]
  \centering
\begin{tikzpicture}
\draw (0.5,0.5) rectangle (3.5,3.5);
\node at (3.8, 0.4){$\mathcal B_k$};
\draw (1,1) rectangle (3,3);
\node at (3.2, 0.9){$\mathcal L_k$};
\draw[fill=black] (2.5, 1.8) circle (0.05) node[left] {\scriptsize$W_t(k)$};
\draw[fill=black] (2.8, 2.1) circle (0.05) node[above] {\scriptsize$Y_{ 2\ell^2n}(k)$};
\draw[fill=black] (3.3, 1.7) circle (0.05) node[below right] {\scriptsize$X_{ 2\ell^2n}(k)$};
\end{tikzpicture}
\caption{Tile $k$ ends up in ${\mathcal L_k}$ in the idealized process, and the total error in approximation is small;
  therefore, tile $k$ ends up in $\boxk$ in the torus shuffle.} 
  \label{fig:3-part stage 1}
\end{figure}
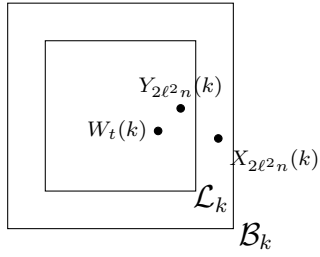

More precisely, we have
\begin{equation}
\begin{aligned}
    &\P(X_{ 2\ell^2 n}({\tilet}) \in B_{\tilet}, {\tilet}=i, j, k) \\
    &\geq \P \left (d_2(X_{ 2\ell^2 n}({\tilet}), \mathcal L_{\tilet}) \leq \delta\ell, {\tilet} = i, j, k\right)\\
    & \geq \P(W({\tilet}) \in \mathcal L_{\tilet}, {\tilet} = i, j, k) \\
    &\quad - \P\left
      (|| Y_{ 2\ell^2 n}({\tilet}) - W({\tilet}) ||_2 >
      \frac{1}{2}\delta\ell \text{ or }  ||X_{ 2\ell^2 n}({\tilet}) -
        Y_{ 2\ell^2 n}({\tilet})||_2 > \frac{1}{2}\delta\ell, \right.\\
    &\quad \quad \left. \vphantom{\frac{1}{2}}\text{ for at least one }{\tilet} \in \{i, j, k\}\right),
\end{aligned}
\end{equation}
where we write $d_2(X_{ 2\ell^2 n}({\tilet}), \mathcal L_{\tilet})$
to mean the $l^2$-distance between $X_{ 2\ell^2 n}({\tilet})$ and $\mathcal L_{\tilet}$.

By a union bound,

\begin{equation}
\begin{aligned}
    \P(\mbox{$X_{ 2\ell^2 n}({\tilet}) \in B_{\tilet}$ for ${\tilet} \in \{i, j, k\}$}) 
    & \geq \bbP(W({\tilet}) \in \mathcal L_{\tilet}, {\tilet} = i, j, k) \label{eq:Stage 1 terms}\\
    &\quad - \sum_{{\tilet} \in \{i, j, k\}}\bbP\left (\abs{Y_{ 2\ell^2 n}({\tilet}) - W({\tilet})}_2 > \frac{1}{2}\delta\ell \right)\\
    &\quad - \sum_{{\tilet} \in \{i, j, k\}}\bbP\left(\abs{X_{ 2\ell^2 n}({\tilet}) - Y_{ 2\ell^2 n}({\tilet})}_2 > \frac{1}{2}\delta\ell \right) .
\end{aligned}
\end{equation}

We start by lower bounding the first term (\ref{eq:Stage 1 terms}),
which involves the idealized process. Since the tiles in the idealized process are independent, we can analyze each tile separately.
Consider tile $i$. It starts somewhere in the box $\boxl$ and then performs
$\ell^2$ steps of a random walk in the torus with increment steps $\{m_q^i\}_{q\geq 1}$.
Each $m_q^i$ is a random vector that is a
unit step in one of the 4 directions, chosen with probability $\frac{1}{4}$ each.

   The position of tile $i$ is reduced $\bmod \; n$ since the walk is in the torus. However, to simplify
the calculation, we shall assume that the position is not reduced $\bmod \; n$. This can
only decrease the probability that tile $i$ ends up in its box. 
So we imagine that the increment steps $\{m_q^i\}_{q = 1}^{\ell^2}$ are unit steps
in $\Z^2$, chosen uniformly at random from the $4$ possible directions. 
The average step is $\langle 0,0\rangle$, the
covariance matrix of a step is $\begin{bmatrix} \frac{1}{2} & 0 \\ 0 & \frac{1}{2}\end{bmatrix}$, and the steps are independent and
identically distributed.

The displacement of tile $i$ is the sum of its $\ell^2$ random walk moves. 
Let $S := \sum_{q=1}^{\ell^2} m_q^i$ be this sum.
By the ($2$-dimensional) central limit theorem, we know that $S$ should be approximately a normal random variable.
In particular, \cite{berry} gives a Berry-Esseen bound on how close this approximation is.
We state the $2$-dimensional version of the main theorem in \cite{berry} here:

\begin{theorem}
  \label{berry}
  Let $Y_1, \dots, Y_N$ be independent random vectors in $\R^2$ with mean zero,
  and let $S = Y_1 + \cdots + Y_N$. Suppose that the covariance matrix $\Sigma$, of $S$, is
  invertible. Let $Z$ be a centered normal random vector with variance matrix  $\Sigma$.
Then for any convex set $U \subset \R^2$, we have
\begin{align}
    \abs{\P(S \in U) - \P(Z \in U)} \leq C(2)^{\frac{1}{4}}\gamma,
    \label{eq:Berry-Esseen}
\end{align}
where $C$ is a universal constant and $\gamma = \sum_{q=1}^{N}\E\left[\abs{\Sigma^{-1/2}Y_q}_2^3\right]$.
\end{theorem}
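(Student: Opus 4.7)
The plan is to derive this as the two-dimensional specialization of Bentkus's multidimensional Berry--Esseen bound for convex sets \cite{berry}, so the argument naturally splits into a standardization step and an invocation of the general $d$-dimensional result.

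First I would reduce to the isotropic case. Set $\tilde Y_q := \Sigma^{-1/2} Y_q$ and $\tilde S := \sum_q \tilde Y_q = \Sigma^{-1/2} S$. Because $\Sigma^{-1/2}$ is a fixed invertible linear map, the $\tilde Y_q$ are independent and mean zero, and $\tilde S$ has covariance $I_2$. For any convex $U \subset \R^2$, the image $\tilde U := \Sigma^{-1/2} U$ is again convex; moreover $\P(S \in U) = \P(\tilde S \in \tilde U)$ and, setting $\tilde Z := \Sigma^{-1/2} Z$ (a standard normal in $\R^2$), $\P(Z \in U) = \P(\tilde Z \in \tilde U)$. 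Since $\|\Sigma^{-1/2} Y_q\|_2 = \|\tilde Y_q\|_2$, the quantity $\gamma$ becomes $\sum_q \E[\|\tilde Y_q\|_2^3]$. Thus it suffices to prove the bound assuming $\mathrm{Cov}(\tilde S) = I_2$, with $\tilde Z$ standard normal.

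In the isotropic case I would follow Bentkus's three-step scheme. (i) Smooth the indicator $\mathbf{1}_{\tilde U}$ by a function $f_\epsilon$ with $f_\epsilon = 1$ on $\tilde U$, $f_\epsilon = 0$ outside the $\epsilon$-neighborhood $\tilde U^\epsilon$, and derivatives bounded in powers of $\epsilon^{-1}$ (for instance via convolution with a Gaussian of scale $\epsilon$). (ii) Bound the smoothing error against $\tilde Z$ using Ball's isoperimetric inequality for convex bodies under the Gaussian measure,
\[
\gamma_d(\tilde U^\epsilon \setminus \tilde U) \le C\, d^{1/4}\, \epsilon,
\]
which for $d = 2$ contributes the stated $2^{1/4}$ factor. (iii) Estimate $|\E f_\epsilon(\tilde S) - \E f_\epsilon(\tilde Z)|$ by a Lindeberg swapping argument: replace each $\tilde Y_q$ in turn by an independent Gaussian of matching covariance and Taylor-expand $f_\epsilon$; the first- and second-order terms cancel by moment matching and only a third-order remainder of order $\E[\|\tilde Y_q\|_2^3]$ times a derivative bound survives. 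Optimizing $\epsilon$ to balance the smoothing error against this remainder yields an overall bound of the form $C\, d^{1/4}\sum_q \E[\|\tilde Y_q\|_2^3]$, and setting $d=2$ gives the theorem.

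The main obstacle is the Gaussian surface measure estimate on convex bodies: a naive Lipschitz-surface bound gives a factor $\sqrt d$ and would produce a bound too large by $d^{1/4}$; only Ball's sharper inequality (relying on nontrivial convex geometry) yields $d^{1/4}$ and hence the constant $2^{1/4}$ that appears in the statement. Once that inequality is granted, the remaining work is book-keeping: combining the smoothing, isoperimetric, and Lindeberg estimates and then specializing to $d = 2$ via the standardization above.
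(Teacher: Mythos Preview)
The paper does not prove this theorem at all: it is quoted verbatim as the two-dimensional case of the main result of Bentkus \cite{berry} and used as a black box in the Stage~1 analysis. So there is no ``paper's own proof'' to compare against; the intended argument is simply ``cite \cite{berry}.'' Your sketch is a faithful outline of Bentkus's actual proof (standardize to isotropic covariance, smooth the indicator of a convex set, control the Gaussian boundary measure via the $d^{1/4}$ surface-area bound, and run Lindeberg replacement), and nothing in it is wrong, but for the purposes of this paper none of that work is required---a one-line citation suffices.
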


We apply the theorem with $N = \ell^2$ and $S = \sum_{q=1}^{\ell^2} m_q^i$.
Note that $S$ has covariance matrix $\Sigma = \begin{bmatrix} \frac{\ell^2}{2} & 0 \\ 0 & \frac{\ell^2}{2}\end{bmatrix}$,
so $\Sigma^{-1/2} = \begin{bmatrix} \frac{\sqrt{2}}{\ell} & 0 \\ 0 & \frac{\sqrt{2}}{\ell}\end{bmatrix}$ and hence
\begin{align}
    \abs{\Sigma^{-1/2}m_q^i}_2 =\frac{\sqrt{2}}{\ell}.
\end{align}
Thus
\begin{align}
    \abs{\Sigma^{-1/2}m_q^i}_2^3 =\frac{2\sqrt{2}}{\ell^{3}},
\end{align}
so
\begin{equation}
  \label{gammaex}
  \gamma = \frac{2\sqrt{2}}{\ell}.
\end{equation}

For the convex set $U$, we use  $\li$, the small box of side length $c\ell-2\delta\ell$. 
Define $\li - X_0(i) := \{ l - X_0(i) : l \in \li \}$. Then
$\p(W(i) \in \li)$ is the probability that $S$ is in $\mathcal L_i - X_0(i)$.  
The Berry-Esseen result
allows us to approximate this by
the probability that the corresponding normal random variable $Z$ is in $\li - X_0(i)$. 
Since $Z$ is bivariate normal with a diagonal covariance matrix, the coordinates of $Z$ are independent. 
Hence, we can consider each coordinate of $Z$ separately. That is, 
if $Z = (Z_x, Z_y)$ and $\li - X_0(i) = I_x \times I_y$ for intervals $I_x$ and  $I_y$,
then 
$\P( W(i) \in \li)$ is approximately $\P(Z_x \in I_x) \P(Z_y \in I_y)$.
The interval $I_x$ has length $cl - 2 \delta l$
and is contained in the interval $[-l, l]$.
Since the density of a normal random variable decreases with distance from the mean, it follows that
\begin{equation}
\begin{aligned}
    \P(Z_x \in I_x) 
    &\geq \P (Z_x \in ( \ell - (c\ell-2\delta\ell),\ell )) \\
    &= \P \Bigl(\bar{Z_x} \in \left (\sqrt{2}(1-c+2\delta), \sqrt{2}\right)\Bigr),
\end{aligned}
\end{equation}
where $\bar{Z_x} := \frac{\sqrt{2}}{\ell}Z_x$ is a standard normal random variable. 
Hence $\P(Z_x \in I_x) \geq \eta$ for some constant $\eta$ that depends on $\delta$ and $c$.
(Recall that $2 \delta < c$.) 
A similar argument shows that $\P(Z_y \in I_y) \geq \eta$ as well. 
Hence
\begin{align}
    \P(Z \in \mathcal L_i - X_0(i)) \geq \eta^2.
\end{align}
Thus Theorem \ref{berry} and the expression for $\gamma$ in equation (\ref{gammaex}) 
imply that there is a universal constant $\widehat{C}$ such that 
\begin{align}
    \bbP(W(i) \in \mathcal L_i)
    =\bbP(S \in \mathcal L_i - X_0(i)) 
    \geq \eta^2 - \frac{\widehat{C}}{\ell}.
\end{align}

The same argument can be made for $j$ and $k$ as well, so we obtain similar bounds for them.

Since the random walks taken by each of the tiles are independent, we get
\begin{equation}
\begin{aligned}
    \P(W({\tilet}) \in \mathcal L_{\tilet}, {\tilet} = i, j, k) 
    &= \P(W(i) \in \mathcal L_i) \P(W(j) \in \mathcal L_j) \P(W(k) \in \mathcal L_k)\\
    &\geq \left( \eta^2 - \frac{\widehat{C}}{\ell}\right)^3, \label{eq:Stage 1 first term result}
\end{aligned}
\end{equation}
which is at least $\eta^6 \over 2$ when $\ell$ is sufficiently large.

Now we consider the first sum appearing in the lower bound in (\ref{eq:Stage 1 terms}), which we recall is
\begin{align}
    - \sum_{{\tilet} \in \{i, j, k\}}\P\left (\abs{Y_{ 2\ell^2 n}({\tilet}) - W({\tilet})}_2 > \frac{1}{2}\delta\ell\right).
\end{align}

We shall consider the term corresponding to  $\tilet = i$ first; the other terms are similar. 

Recall that $W(i)$ is the position of tile $i$ after $\ell^2$ random walk steps, while
$Y_{ 2\ell^2 n}(i)$
is the position of tile $i$ after 
$\nu_i$ steps, where $\nu_i$ is a binomial random variable with parameters $2\ell^2 n$ and  $\frac{1}{2n})$.
Since the two processes use the same random walk $\{m_q^i\}$,
any difference must come from using a different number of random walk steps. We
shall argue that the difference in the number of steps is likely to be on the order of $\ell$,
which means (by Doob's maximal inequality) that the error is likely to be on the order of $\sqrt{\ell}$. 

Let $\kappa$ be a constant large enough so that
\begin{equation}
  \label{kdef}
  {3 \over \kappa^2} \leq {\eta^6 \over 16},
  \end{equation}
and let $N_{\text{tol}} = \kappa \ell$.
($N_{\text{tol}}$ represents the maximum difference between $\nu_i$ and $\ell^2$ that
we are willing to ``tolerate''.)
Our approach will be in two parts: 
\begin{enumerate}
\item Bound $\P( |\nu_i - \ell^2 | > \ntol)$.
\item Bound the probability that
$|\nu_i - \ell^2| \leq \ntol$ and even so 
  the processes are still far apart; this part uses Doob's maximal inequality.
\end{enumerate}
For the first part we use Chebyshev's inequality: 
Since $\nu_i$ is a Binomial($2 \ell^2n, {1 \over 2n}$) random variable, its variance is
at most $\ell^2$, and hence
\begin{align}
  \label{cheb}
    \P( \abs{\nu_i - \ell^2} \geq N_{\text{tol}}) \leq \frac{\var(\nu_i)}{N_{\text{tol}}^2} \leq \frac{\ell^2}{\kappa^2 \ell^2} = \frac{1}{\kappa^2}.
\end{align}

For the second part, let $B$ be the event that 
$\abs{\nu_i - \ell^2} < N_{\text{tol}}$. Note that
\begin{equation}
  \begin{aligned}
    \label{rwz}
    Y_{ 2\ell^2 n}(i) - W(i) = \sum_{s=1}^{\nu_i} m_s^i -\sum_{s=1}^{\ell^2} m_s^i \;.
\end{aligned}
\end{equation}
For $N \geq 0$, define
\[
  S_N = \sum_{s=1}^N m_{\ell^2 + s}; \;\;\;\;\;\;\;\;\;
\shat_N = \sum_{s=1}^N m_{\ell^2 - s} \;\;.
\]
Note that $S_N$ and $\shat_N$ are both simple symmetric random walks on $\Z^2$, and by equation (\ref{rwz})
when $\abs{\nu_i - \ell^2} < N_{\text{tol}}$ the quantity
$Y_{ 2\ell^2 n}(i) - W(i)$ has  the value of one of these random walks over its first $\ntol$ time steps.
Since $\shat_N$ has the same transition rule as $S_N$, it follows that
\begin{equation}
  \label{samey}
  \P\Bigl( \abs{Y_{ 2\ell^2 n}(i) - W(i)}_2 \geq \half \delta \ell , B \Bigr) \leq
  2 \P \Bigl( \max_{1 \leq N \leq \ntol} \abs{S_N}_2 \geq  \half \delta \ell \Bigr) \;.
\end{equation}
Since $( \abs{S_N}_2)_{N \geq 0}$ is a submartingale, Doob's maximal inequality implies that 
that
\begin{equation}
  \label{maxbound}
  \P \Bigl( \max_{1 \leq N \leq \ntol} \abs{S_N}_2 \geq {1 \over 2} \delta \ell \Bigr) \leq
  {2\e( \abs{ S_\ntol}_2) \over \delta \ell} \;.
\end{equation}

It is well-known that simple symmetric random walk $S_N$ in $\Z^2$, started at the origin,  satisfies $\e \abs{S_N}_2^2 = N$,
and hence $\e \abs{S_N}_2 \leq \sqrt{N}$ by Jensen's inequality. Thus, equations
(\ref{samey}) and (\ref{maxbound}) imply that
\begin{eqnarray}
  \P\Bigl( \abs{Y_{ 2\ell^2 n}(i) - W(i)}_\infty \geq \half \delta \ell , B \Bigr)
  &\leq& {4 \sqrt{\ntol} \over \delta \ell} \\
  &=& {4 \sqrt{\kappa} \over \delta \sqrt{\ell}}
\end{eqnarray}

Combining this with equation (\ref{cheb}) gives
\begin{eqnarray*}
  \P\Bigl( \abs{Y_{ 2\ell^2 n}(i) - W(i)}_2 \geq \half \delta \ell\Bigr)   &\leq&
   \P(B^c) +   \P\Bigl( \abs{Y_{ 2\ell^2 n}(i) - W(i)}_2 \geq \half \delta \ell , B \Bigr) \\
                                                                                    &\leq&
\frac{1}{\kappa^2} + {4 \sqrt{\kappa} \over \delta \sqrt{\ell}} \;.
\end{eqnarray*}

This bound applies to tiles $j$ and $k$ as well, so

\begin{align}
  \sum_{{\tilet} \in \{i, j, k\}}\P\left (\abs{Y_{ 2\ell^2 n}({\tilet}) - W({\tilet})}_2 > \frac{1}{2}\delta\ell\right)
  \leq 3 \Bigl( \frac{1}{\kappa^2} + {4 \sqrt{\kappa} \over \delta \sqrt{\ell}} \Bigr) \;.
\end{align}
By equation (\ref{kdef}), for sufficiently large $\ell$ this is at most
${\eta^6 \over 8}$. 

Finally, we consider the last term of the lower bound in (\ref{eq:Stage 1 terms}).
First, for $a \in \Z$, define $\abs{a}_n = \min( a \bmod \, n, (n - a) \bmod \, n)$. That is 
$\abs{a}_n$ is the absolute value of $a$, ``mod $n$.'' If $s = (s_1, s_2) \in \Z^2$ define
\[
  \Abs{ s}_2^2 := |s_1|_n^2 + |s_2|_n^2 \;,
\]
that is, the square of the $l^2$-norm ``mod $n$''. 

We shall need the following lemma.
\begin{lemma}
  \label{L}
  Let $s \in \Z^2$. Let $\Delta = (\Delta_1, \Delta_2)$ be a random variable taking values in
  $\Z^2$ and suppose $\e \Delta_i = 0$. Then
  \[
    \e \Bigl( \Abs{s + \Delta}_2^2 \Bigr)
    \leq \Abs{s}_2^2 + \e( \Delta_1^2) + \e( \Delta_2^2)   \;.
    \]
\end{lemma}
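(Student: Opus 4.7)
The plan is to reduce the two-dimensional statement to a one-dimensional statement, coordinate by coordinate, and then handle the one-dimensional case by choosing the "right" representative of $s$ modulo $n$ so that the $|\cdot|_n$-norm coincides with the ordinary absolute value at $s$ itself, after which $|a+b|_n\le|a+b|$ is a safe inequality to use.

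First I would observe that by definition
\[
\Abs{s+\Delta}_2^2 \;=\; |s_1+\Delta_1|_n^2 + |s_2+\Delta_2|_n^2,
\]
so linearity of expectation reduces the claim to the one-dimensional statement: if $s\in\Z$ and $\Delta$ is an integer-valued random variable with $\e\Delta=0$, then $\e(|s+\Delta|_n^2)\le |s|_n^2+\e(\Delta^2)$. Here the only hypothesis used on $\Delta=(\Delta_1,\Delta_2)$ is that each marginal has mean zero, so no independence between $\Delta_1$ and $\Delta_2$ is needed.

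For the one-dimensional claim, the key trick is that $|\,\cdot\,|_n$ is invariant under shifts by $n\Z$: if we pick $\bar s\in s+n\Z$ so that $|\bar s|=|s|_n$ (i.e.\ $\bar s$ is the representative of $s$ of smallest absolute value, which lies in $[-n/2,n/2]$), then for every integer $d$ we have
\[
|s+d|_n \;=\; |\bar s+d|_n \;\le\; |\bar s+d|.
\]
Squaring and taking expectations with $d=\Delta$ gives
\[
\e\bigl(|s+\Delta|_n^2\bigr) \;\le\; \e\bigl((\bar s+\Delta)^2\bigr) \;=\; \bar s^{\,2}+2\bar s\,\e(\Delta)+\e(\Delta^2) \;=\; |s|_n^2+\e(\Delta^2),
\]
where the last equality uses $\e\Delta=0$ and $|\bar s|=|s|_n$.

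Applying this one-dimensional bound to each coordinate with $s_i$ and $\Delta_i$ and summing yields
\[
\e\bigl(\Abs{s+\Delta}_2^2\bigr) \;\le\; |s_1|_n^2+|s_2|_n^2+\e(\Delta_1^2)+\e(\Delta_2^2) \;=\; \Abs{s}_2^2+\e(\Delta_1^2)+\e(\Delta_2^2),
\]
which is the stated inequality. There is no real obstacle here: the only subtle point is recognizing that one must replace $s$ by its minimum-absolute-value representative modulo $n$ before expanding the square, since expanding $(s+\Delta)^2$ with the original $s$ would produce a bound involving $s^2$ rather than $|s|_n^2$.
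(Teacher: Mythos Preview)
Your proof is correct and follows essentially the same approach as the paper: both reduce to a coordinate-wise inequality by replacing $s_i$ with a representative modulo $n$ of minimal absolute value (the paper phrases this as ``without loss of generality $s_i\le n/2$, otherwise replace $s_i$ by $n-s_i$ and $\Delta_i$ by $-\Delta_i$''), then use $|a|_n\le|a|$ and expand the square using $\e\Delta_i=0$. Your formulation via the minimum-absolute-value representative $\bar s$ is arguably a bit cleaner, but the argument is the same.
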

\begin{proof}
  We can assume, without loss of generality, that $s_1 \leq {n \over 2}$ and $s_2 \leq {n \over 2}$.
  (Otherwise, consider $n - s_1$
  and replace $\Delta_1$ with $-\Delta_1$, and similarly for $s_2$.) 
  Then
  \begin{eqnarray}
    \Abs{ s + \Delta }_2^2 &=& | s_1 + \Delta_1|_n^2 +  | s_2 + \Delta_2|_n^2   \nonumber   \\
                           &\leq& (s_1 + \Delta_1)^2 + (s_2 + \Delta_2)^2.     \label{star}
  \end{eqnarray}
But
  \begin{eqnarray*}
    \e(  s_1 + \Delta_1)^2 &=& \var( s_1 + \Delta_1) + ( \e(s_1 + \Delta_1))^2 \\
                           &=& \e( \Delta_1^2) + s_1^2 \,,  
  \end{eqnarray*}
  and similarly for $\e(  s_2 + \Delta_2)^2$. Thus, taking expectations in (\ref{star}) gives
  \begin{equation}
    \e \Abs{ s + \Delta }_2^2 \leq s_1^2 + s_2^2 + \e \Delta_1^2 + \e \Delta_2^2,
  \end{equation}
  and note that $s_1^2 + s_2^2 = \Abs{s}_2^2$.
\end{proof}
We now return to bounding the second sum the lower bound in (\ref{eq:Stage 1 terms}), which we recall is
\begin{align}
  \label{summ}
    - \sum_{{\bbT}=i, j, k}\bbP\left(\Abs{X_{ 2\ell^2 n}({\bbT}) - Y_{ 2\ell^2 n}({\bbT})}_2 > \frac{1}{2}\delta\ell\right).
\end{align}

We shall focus on the term in the sum corresponding to tile $k$.
Recall that $X_t(k)$ (respectively, $Y_t(k)$) is the position of tile $k$ after $t$ steps of the torus shuffle
(respectively, oblivious shufle).
Let $Z(t) = X_t(k)- Y_{t}(k)$, the vector difference between the two processes,
and suppose that $Z(t) = (Z_1(t), Z_2(t))$. 

Let $N_t^k$ be the number of times in the first $t$ steps of the torus shuffle that tile $k$ moves without being a master tile.
Let $\f_t$ be the $\sigma$-field generated by $(X_0, \dots, X_t)$ and $(Y_0, \dots, Y_t)$.
We claim that $\Abs{ Z(t) }_2^2 - 2 N_t^k$ is a supermartingale with respect to $\f_t$.
To see this, consider the step at time $t$, that is, the step that generates $X_{t+1}$ and $Y_{t+1}$ from
$X_{t}$ and $Y_{t}$. Note that $N_{t+1}^k - N_{t}^k$ is either $0$ or $1$.
We consider the two cases separately. \\
~\\
{\bf Case $N_{t+1}^k - N_{t}^k = 0$}:
In this case, either
\begin{itemize}
\item Tile $k$ does not move in either process, or
\item Tile $k$ is a master tile and hence moves the same way in both processes.
\end{itemize}
Either way, $Z(t+1) = Z(t)$ and hence
$\Abs{ Z(t+1) }_2^2 - 2 N_{t+1}^k = \Abs{ Z(t) }_2^2 - 2 N_t^k$. \\
~\\
{\bf Case $N_{t+1}^k - N_{t}^k = 1$}:
In this case in the step at time $t$, tile $k$ moves without being
a master tile. First, we consider the case where tile $k$ moves in a horizontal direction in the torus shuffle 
without being a master tile. 
(The analysis when tile $k$ moves vertically is similar.)
Call this event $H$. Note that on the event $H$, the direction that tile $k$ moves in the oblivious process
is independent
of whether tile $k$ moves right or left in the torus shuffle.
It follows that $(X_{t+1}(k), Y_{t+1}(k))$ is of the form
\begin{itemize}
\item $\Bigl( X_t \pm (1,0), Y_t \pm (1,0) \Bigr)$; or
\item $\Bigl( X_t \pm (1,0), Y_t \pm (0,1) \Bigr)$,
\end{itemize}
with all $8$ possibilities equally likely. Thus 
\[
Z(t+1) - Z(t) = 
\left\{\begin{array}{lllllll}
(0,0) & 
\mbox{with probability $1/4$;}\\
(2,0)  & \mbox{with probability $1/8$;}\\
(- 2,0)  & \mbox{with probability $1/8$;}\\
(1,1)  & \mbox{with probability $1/8$;}\\
(1,-1)  & \mbox{with probability $1/8$;}\\
(-1,1)  & \mbox{with probability $1/8$;}\\
(-1,-1)  & \mbox{with probability $1/8$.}\\
\end{array}
\right.
\]
Hence, by Lemma \ref{L}, it follows that
\begin{equation}
  \e( \Abs{Z(t+1)}_2^2 \given \f_t, H) \leq Z^2(t) + 2 \,.
  \end{equation}
  The calculation is similar in the event (which we denote by $V$) that tile $k$ makes a vertical move in the torus shuffle
  without being a master tile. Hence
\begin{equation}
  \e( \Abs{Z(t+1)}_2^2 \given \f_t, H \cup V) \leq Z^2(t) + 2 \,,
  \end{equation}
  and so
\begin{equation}
  \e( \Abs{Z(t+1)}_2^2 - 2 N_{t+1}^k \given \f_t) \leq \Abs{Z(t)}_2^2 - 2 N_{t}^k \,.
\end{equation}

Hence $\Abs{Z(t)}_2^2 - 2 N_{t}^k$ is indeed a supermartingale. Since it initially has the value $0$, it follows that
for all times $t$ we have
\begin{equation}
  \label{zbound}
  \e \bigl( \Abs{Z(t)}_2^2 \bigr) \leq  2 \e( N_{t}^k ).
\end{equation}

Let $A_0^k = 0$ and for $t \geq 1$ let
$A_t^k$ be the number of times in $\{0, 1, \dots, t-1\}$
that
tile $k$
shares a row or column with tile $j$ or tile $i$.
(Recall that $k$ is a master tile unless it is in the same row or column
as tile $j$ or tile $i$.)
Since tile $k$ has a $\frac{1}{2n}$ chance at moving at each step,
independently of the locations of the tiles,
it follows that for $t \geq 1$ we have
\begin{align}
  \label{nbound}
    \bbE(N_t^k) = \frac{1}{2n} \bbE(A_t^k).
\end{align}
Let $A_t^{k, i}$ be the number of times in $\{0, \dots, t-1\}$ that
tile $k$ is in the same row or column as tile $i$, 
with a similar definition for
$A_t^{k, j}$.
Then for $t \geq 0$ we have
\begin{align}
  \label{as}
    A_t^k \leq A_t^{k, i} + A_t^{k, j}  \,.
\end{align}

We now wish to find uppper bounds for  
$\e (A_t^{k, i})$ and $\e (A_t^{k, j})$ when $t = 2 \ell^2 n$.

First, we bound $\e (A_t^{k, i})$; the upper bound for $\e (A_t^{k, j})$ is similar.
We will only need the bound for when tiles $k$ and $i$ start in the box $B_l$,
but we shall prove a bound that is valid when they start anywhere in the torus. 

   We start by defining a process $D(t) = (D_1(t), D_2(t))$ that represents the difference between the positions of tile $i$ and tile $k$,
respectively, in
the torus shuffle.
We define $D(t)$
inductively as follows.
Let $D(0)$ be such that
\begin{eqnarray*}
  D_1(0) &\equiv& \Bigl( X_0(i) - X_0(k) \Bigr)_1 \;\bmod \, n;   \\
  D_2(0) &\equiv& \Bigl( X_0(i) - X_0(k) \Bigr)_2 \;\bmod \, n,
\end{eqnarray*}
and also $|D_1(0)| \leq {n \over 2}$ and 
$|D_2(0)| \leq {n \over 2}$. For $t \geq 1$, 
let $D(t)$ be such that
\begin{eqnarray*}
  D_1(t) &\equiv& \Bigl( X_t(i) - X_t(k) \Bigr)_1 \;\bmod \, n;   \\
  D_2(t) &\equiv& \Bigl( X_t(i) - X_t(k) \Bigr)_2 \;\bmod \, n,
\end{eqnarray*}
and also $\Abs{ D(t) - D(t-1)}_1 \leq 1$.
Note that
\[
  D(t+1) - D(t) \in \{ (0,0), (\pm 1, 0), (0, \pm 1) \} \;.
\]
So $D(t)$ is a sort of random walk on $\Z^2$ that starts somewhat close to $(0,0)$.
(It's a random walk that can only change value if tile $i$ or tile $k$ moves.)
Define the {\it wrap around time} $\tesc$ by 
\[
  \tesc = \min\{ t: \mbox{$D_1(t) \in \{-n,n\}$ or
    $D_2(t) \in \{-n,n\}$} \} \,.
\]
We can think of $\tesc$ as the time required for
$i$ and $k$ to be in the same row or column because 
of ``wrapping around'' the torus.
(The possibility of wrapping around
within $2n \ell^2$ steps
is an irritating technical issue
that we have to consider when $\ell$ is roughly of the same order as $n$.)
Note that if $t < \tesc$, then 
tile $i$ and tile $k$ are
not in the same row or column
provided that $D_1(t) \neq 0$ and 
$D_2(t) \neq 0$. \\
~\\
Let $M_t := \Abs{ D(t) }_1 - {1 \over 2n} A_t^{k,i}$.
We claim that
$M_{t \wedge \tesc}$
is
a martingale with respect to $\sigma(X_1, \dots, X_t)$. To see this,
consider the following two cases:\\
~\\
{\bf Case $D_1(t) \neq 0$ and $D_2(t) \neq 0$:} 
Suppose that $0 < |D_1(t)| < n$ and 
$0 < |D_2(t)| < n$. Note that in this case, if a move increases the value of
$\Abs{ D(t) }_1$ by one unit, then the opposite
move (which occurs with the same probability)
decreases 
$\Abs{ D(t) }_1$ by one unit. 
Also $A_{t+1}^{k,i} = A_{t}^{k,i}$. It follows that in this case we have
$\e (M_{t+1} \given X_1, \dots, X_t) = M_t$. \\
~\\
{\bf Case $D_1(t) = 0$ or $D_2(t) =  0$:} In this case tiles $i$ and $k$ are
in the same row or column and hence
$A_{t+1}^{k,i} = A_{t}^{k,i} + 1$. There are $4$ moves that
change the value of 
$\Abs{ D(t) }_1$. For example, if $i$ and $k$ are in the same row, then $i$ could move up or down
or $k$ could move up or down. Each move
increases $\Abs{ D(t) }_1$ by one unit and 
occurs with probabilty ${1 \over 8n}$.
Hence $\e (\Abs{D(t+1)}_1 \given X_1, \dots, X_t) = \Abs{D(t)}_1 + {1 \over 2n}$ and hence
$\e (M_{t+1} \given X_1, \dots, X_t) = M_t$ in this case as well. \\
~\\
It follows that $M_{t \wedge \tesc}$ is a martingale. Consequently,
\[
  \e (M_{t \wedge \tesc}) = \e M_0 = \Abs{ D(0)}_1,
\]
and hence
\begin{eqnarray}
  \label{mart}
  \e \Bigl( {1 \over 2n} A_{t \wedge \tesc}^{k,i} \Bigr) &=& \e \Abs{ D(t \wedge \tesc)}_1 - \Abs{ D(0)}_1 \nonumber \\
  &\leq& \sqrt{ \e D_1(t \wedge \tesc)^2}  + \sqrt{ \e | D_2(t \wedge \tesc)^2}  - \Abs{ D(0)}_1,   \label{sixtyeight}
\end{eqnarray}
by Jensen's inequality. The quantity (\ref{sixtyeight}) can be written
\begin{equation}
 \Bigl(\sqrt{ \e D_1(t \wedge \tesc)^2}
  - \sqrt{ D_1(0)^2}  \Bigr)
+ \Bigl( \sqrt{ \e | D_2(t \wedge \tesc)^2}
  - \sqrt{ D_2(0)^2}     \Bigr) \label{twoterms}    \,.
\end{equation}
We shall now bound the first  term in parentheses in (\ref{twoterms}). 
Let $V_1(t) := D_1(t)^2 - {1 \over 2n} t$.      
We claim that $V_1(t)$
is a  supermartingale.
Note that if $i$ and $k$ are in the same row at time $t$ then $D_1(t)$ cannot change in
the next step. If $i$ and $k$ are not in the same row at time $t$, then
\begin{itemize}
  \item there are two moves that decrease $D_1(t)$ by one unit;
  \item there are two moves that increase $D_1(t)$ by one unit,
  \end{itemize}
and each move that changes $D_1(t)$ occurs with probability ${1 \over 8n}$. 
Thus, 
\begin{eqnarray}
  \e ( D_1(t+1)^2 \given X_1, \dots, X_t) &=&  \Bigl( 1 - {1 \over 2n} \Bigr)   \nonumber
                                              D_1^2(t) + {1 \over 4n} (D_1(t) + 1)^2 +  {1 \over 4n} (D_1(t) - 1)^2   \\
                                          &=& D_1(t)^2 + {1 \over 2n} \,,        \label{submg}
\end{eqnarray}
and hence $V_1(t)$ is a supermartingale.
Since $\tesc$ is a stopping time, it follows that $V_1(t \wedge \tesc)$ is a supermartingale as well. 
Hence, for any time $t$ we have
\begin{eqnarray*}
  \e ( D_1( t \wedge \tesc )^2 ) &\leq& D_1(0)^2 + {1 \over 2n} \e( t \wedge \tesc)    \\
  &\leq& D_1(0)^2 + {t \over 2n} \,,
\end{eqnarray*}
so the first term in parentheses in (\ref{twoterms}) is at most
\begin{eqnarray*}
\sqrt{ D_1(0)^2 + {t \over 2n}} - \sqrt{ D_1(0)^2} &\leq& \sqrt{ {t \over 2n}},
\end{eqnarray*}
where the inequality holds
by the Net Change Theorem from Calculus,
because the second derivative of the function $\sqrt{x}$ is negative.  
A similar argument shows that the second term in 
parentheses in (\ref{twoterms}) also at most
$\sqrt{ {t \over 2n}}$. It follows that
\begin{eqnarray}
  \label{marty}
  \e \Bigl( {1 \over 2n} A_{t \wedge \tesc}^{k,i} \Bigr) &\leq&
                                                                2 \sqrt{t \over 2n} \,.
\end{eqnarray}
Substituting $t = 2 \ell^2 n$ gives
\begin{equation}
  \e \Bigl( {1 \over 2n} A_{2 \ell^2 n \wedge \tesc}^{k,i} \Bigr) \leq
                                                                2\ell \,.
\end{equation}
We can define a sequence $\tesc^1 < \tesc^2 < \cdots$ of wrap around times inductively as follows.
Define $\tesc^1 = \tesc$ and for $r \geq 1$ let $\tesc^{r+1}$ be the first time
that $i$ and $k$ wrap around after time $\tesc^r$ (where the ``wrap around''
is relative to a start from the positions at time $\tesc^r$). It is convenient to define $\tesc^0 = 0$. 
Let $N = \min\{r : \tesc^r \geq 2 \ell^2 n\}$ be the number of times tiles $i$ and $k$ wrap around
before time $2 \ell^2 n$, plus one. The strong Markov property and an
argument similar to above shows that for all $r \geq 0$ we have
\[
  \e \Bigl( 
  {1 \over 2n} \Bigl(
  A_{2 \ell^2 n \wedge \tesc^{r+1}}^{k,i} - A_{2 \ell^2 n \wedge \tesc^r}^{k,i} \Bigr) \given N > r
\Bigr) \leq
  2\ell \,.
\]
It follows that
\begin{eqnarray}
  \e \Bigl( {1 \over 2n} A_{2 \ell^2 n}^{k,i} \Bigr) &=&
                                                         \sum_{r = 0}^\infty
                                                           \e \Bigl( \Bigl(
  {1 \over 2n} A_{2 \ell^2 n \wedge \tesc^{r+1}}^{k,i} - A_{2 \ell^2 n \wedge \tesc^r}^{k,i}  \Bigr)       \one(N > r)
                                                         \Bigr)    \nonumber \\
                                                     &\leq& \sum_{r = 0}^\infty 2 \ell \P(N > r)    \nonumber \\
                                                     &=& 2 \ell \e N  \,.   \label{almost}
\end{eqnarray}

Our final step will be to show that $\e N$ is bounded above.
To accomplish this, we first show that $i$ and $k$ are unlikely to ``wrap around'' in fewer than
${n^3 \over 4}$ steps. To see this, note that equation (\ref{submg}) implies that
$D_1(t)^2$ is a submartingale. Thus, Doob's maximal inequality implies that
\begin{eqnarray}
\label{bd}
  \P( \max_{0 \leq t \leq {n^3 \over 4}} |D_1(t)| \geq n) &=&
                                                              \P( \max_{0 \leq t \leq {n^3 \over 4}} D_1(t)^2 \geq n^2) \\
                                                              &\leq& { \e D_1({n^3 \over 4})^2 \over n^2} \,.
\end{eqnarray}
But since $D_1(t)^2 - {t \over 2n}$ is a supermartingale, it follows that
  for any $t$ we have $\e D_1(t)^2 \leq D_1(0)^2 + {t \over 2n}$. Substituting $t = {n^3 \over 4}$
  and using the fact that $D_1(0) \leq {n \over 2}$ gives 
\begin{eqnarray*}
  \e D_1\Bigl( {n^3 \over 2} \Bigr)^2 &\leq& {n^2 \over 4} + {n^2 \over 8}\\
  &=& {3n^2 \over 8} \,.
\end{eqnarray*}
Combining this with equation (\ref{bd}) gives
\begin{equation}
  \label{bod}
  \P( \max_{0 \leq t \leq {n^3 \over 4}} |D_1(t)| \geq n) \leq {3 \over 8} \,.
  \end{equation}
A similar argument shows that
\begin{equation}
  \label{bodt}
  \P( \max_{0 \leq t \leq {n^3 \over 4}} |D_2(t)| \geq n) \leq {3 \over 8} \,,
  \end{equation}
  and hence $\P( \tesc \leq {n^3 \over 4} ) \leq {3 \over 4}$. Since
  $\ell \leq n$, we have
  \[
    {2n \ell^2 \over n^3/4} \leq 8 \,.
  \]
  It follows that the random variable $N$ is stochastically dominated by the number of flips required
  to get $8$ heads
when flipping a coin of bias ${1 \over 4}$
  (that is, the negative binomial distribution with parameters $8$ and ${1 \over 4}$.) Hence $\e N \leq 32$. Combining this
  with equation (\ref{almost}) gives
\begin{equation}
  \e \Bigl( {1 \over 2n} A_{2 \ell^2 n}^{k,i} \Bigr) \leq 64 \ell 
\end{equation}
A similar argument shows that we also have
\begin{equation}
  \e \Bigl( {1 \over 2n} A_{2 \ell^2 n}^{k,j} \Bigr) \leq 64 \ell \;. 
\end{equation}
Combining this with equation (\ref{as}) gives
\begin{align}
  \label{as}
  \e \Bigl( {1 \over 2n} A_{2 \ell^2 n}^k \Bigr)      \leq 128 \ell \,.
\end{align}
Combining this with (\ref{zbound}) and (\ref{nbound}) gives
\begin{eqnarray*}
  \e( \abs{ Z( 2 \ell^2 n) }_2  )   &\leq& \sqrt{256 \ell } \\
                           &=& 16 \sqrt{\ell} \,.
\end{eqnarray*}
Thus, Markov's inequality implies that the term in the sum in (\ref{summ}) corresponding to $\tilet = k$, 
\begin{eqnarray*}
  \P( \abs{ Z(2 \ell^2 n)}_2 \geq {1 \over 2} \delta \ell) &\leq& {16 \sqrt{\ell} \over \delta \ell/2}    \\
                                                           &=& {32 \over \delta \sqrt{\ell}}
\end{eqnarray*}
This is at most ${\eta^6 \over 16}$ when $\ell$ is sufficiently large. 
We can get a similar bound for the term in the sum in (\ref{summ}) corresponding to $\tilet = j$
(although, we could do a little better because tile $j$ has less interference.) Finally, note that
the term in the sum in (\ref{summ}) corresponding to $\tilet = i$ is  $0$ because
tile $i$ behaves the same way in the oblivious process as in the torus shuffle. It follows that
if $\ell$ is sufficiently large then the second sum
in (\ref{eq:Stage 1 terms}) is at most ${\eta^6 \over 8}$.

In summary, we have shown that when $\ell$ is sufficiently large,
the first term in (\ref{eq:Stage 1 terms}) is at least ${\eta^6 \over 2}$,
and each sum in (\ref{eq:Stage 1 terms}) is at most ${\eta^6 \over 8}$. It follows that
\begin{equation}
  \label{sone}
\begin{aligned}
    \P(\mbox{$X_{ 2\ell^2 n}({\tilet}) \in B_{\tilet}$ for ${\tilet} \in \{i, j, k\}$} ) 
    & \geq {\eta^6 \over 4}     \,.
\end{aligned}
\end{equation}
~\\
{\bf Stage 2 Analysis:} \\
~\\
Let $\ellp = c \ell$ be the side length of boxes $\boxi$, $\boxj$ and $\boxk$. 
In Stage 2, we
will run the torus shuffle ${n  \ellp^2}$ steps.
We  
need a $\bigomega( \ell^{-6})$ lower bound 
for the probability that tiles $i$, $j$ and $k$,
all starting in their respective boxes, move to specified
target positions in their respective boxes after the $n \ellp^2$
steps. Our main tool will be the local central limit theorem.  \\
~\\
As in our analysis of Stage 1, we will construct the
torus shuffle using  master tiles.
We will define random walks associated to 
tiles $i$, $j$ and $k$, respectively,
and each tile will
move according to its random walk when it is a  master tile.
This time, tiles $i$, $j$ and $k$ will be ``far away'' from each other
so we can ensure that they remain master tiles, with high probability,
throughout the duration of Stage 2.

To avoid parity issues in our application of the local limit
theorem it will be helpful if the random walks driving the shuffle are
lazy. So we modify our construction from Stage 1 as follows.
Instead of flipping a coin initially to determine whether each step is lazy,
this time there will always be a tile that does
a step of its random walk, and this time the tiles' random walks will be lazy. 
In addition, to simplify calculations, for each tile
we consider the horizontal walk and the vertical walk separately.

More precisely,
for each tile ${\bbT}$ 
we assign two sequences of
one-dimensional lazy random walk increments, a horizontal walk $\{h_q^{\bbT}\}_{q=1}^{\infty}$ and a vertical
walk $\{v_q^{\bbT}\}_{q=1}^{\infty}$.
The random walk increments are independent and
have the distribution corresponding to lazy simple random walk on $\Z$, that is, 
$1$ or $-1$ with probability $\quarter$ each, and $0$ with probabilty $\half$.
We shall consider an increment taken from the sequence $\{h_q^\tilet\}$ to be a ``horizontal move,''
even when $h_q^{\tilet} = 0$, and similarly for the vertical walk. 

To generate the torus shuffle, at each step:
\begin{enumerate}
\item Choose $n$ tiles that don't share any rows 
  or columns to be master tiles. 
  If $i$, $j$ and $k$ don't share any rows or columns then
  include them among the master tiles. 
    \item Choose a  master tile uniformly at random; call it $\widehat{{\bbT}}$.
    \item Flip a coin. (Heads will indicate a horizontal move and tails will indicate a vertical move.) 
    \begin{itemize}
    \item If the coin lands heads:
      suppose $\widehat{{\bbT}}$ has already made $(q-1)$
      horizontal moves. Then  move  $\widehat{{\bbT}}$ according to $h_q^{\widehat{{\bbT}}}$, and cyclically rotate 
      the other tiles accordingly.
 \item If the coin lands tails:
      suppose $\widehat{{\bbT}}$ has already made $(q-1)$
      vertical moves. Then  move  $\widehat{{\bbT}}$ according to $v_q^{\widehat{{\bbT}}}$, and cyclically rotate 
      the other tiles accordingly.
          \end{itemize}
        \end{enumerate}
Assume that tiles $i$, $j$ and $k$ all start in their respective boxes. 
We wish the use the local central limit theorem to find a lower bound for the probability
that the tiles go to their target positions in their boxes during Stage 2.
We want to ensure that tiles $i$, $j$ and $k$ are always in different rows and columns throughout
Stage 2, so that they remain master tiles. We accomplish this by defining ``buffer zones''
$\effi, \effj$ and $\effk$ around the boxes $\boxi$, $\boxj$ and $\boxk$, respectively.
(See Figure \ref{fig:Stage 2 diagram} below.) The buffer zones
will be squares whose side lengths are $K$ times larger than the boxes.
Say that a tile goes to its target position {\it safely}
if it does so without ever leaving its buffer zone.
We will actually find a lower bound for the probability that
each tile's random walk takes it to its target position safely. 

\begin{figure}[!h]
  \centering
\begin{tikzpicture}
\draw (0,0) rectangle (4,4);
\draw (4/9, 28/9) rectangle (8/9, 32/9);
\draw (16/9, 16/9) rectangle (20/9, 20/9);
\draw (28/9, 4/9) rectangle (32/9, 8/9);
\node at (9.5/9, 33/9){\small$\mathcal B_i$};
\node at (21.5/9, 21/9){\small$\mathcal B_j$};
\node at (33.5/9, 9/9){\small$\mathcal B_k$};
\node at (2/9, 23/9) {$\mathcal F_i$};
\node at (14/9, 11/9) {$\mathcal F_j$};
\node at (23/9, 2/9) {$\mathcal F_k$};
\draw [dashed] (1/9, 25/9) rectangle (11/9, 35/9);
\draw [dashed] (13/9, 13/9) rectangle (23/9, 23/9);
\draw [dashed] (25/9, 1/9) rectangle (35/9, 11/9);
\draw[decoration={brace,raise=2pt},decorate] 
    (1/9,4) -- node[above=4pt] {$K\ellp$} (11/9,4);
\draw[decoration={brace,raise=2pt},decorate] 
    (13/9,4) -- node[above=4pt] {$K\ellp$} (23/9,4);
\draw[decoration={brace,raise=2pt},decorate] 
    (25/9,4) -- node[above=4pt] {$K\ellp$} (35/9,4);
\draw[decoration={brace,raise=2pt},decorate] 
    (0,1/9) -- node[left=4pt] {$K\ellp$} (0,11/9);
\draw[decoration={brace,raise=2pt},decorate] 
    (0,13/9) -- node[left=4pt] {$K\ellp$} (0,23/9);
\draw[decoration={brace,raise=2pt},decorate] 
    (0,25/9) -- node[left=4pt] {$K\ellp$} (0,35/9);
\draw[->] (5/9, 30/9) .. controls (6/9, 34/9) .. (7/9, 31/9);
\draw[->] (17/9, 17/9) .. controls (12/9, 12/9) and (12/9, 24/9) .. (19/9, 18/9);
\draw[->] (29/9, 6/9) .. controls (4, 6/9) and (4, 0) .. (30/9, 5/9);
\end{tikzpicture}
  \caption{As long as $i$, $j$, and $k$ stay close to $\mathcal B_i$, $\mathcal B_j$, and $\mathcal B_k$ by remaining inside $\mathcal F_i$, $\mathcal F_j$, and $\mathcal F_k$ respectively, there is no potential for interference.}
  \label{fig:Stage 2 diagram}
\end{figure}
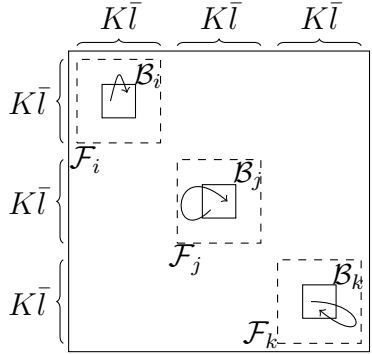

It will be helpful to condition on the number of horizontal and vertical random walk
steps taken by each of the tiles $i$, $j$ and $k$. 
Let $H_i$, $H_j$, and $H_k$ be the
number of horizonal 
steps taken by tiles $i$, $j$, and $k$, respectively,
in the $n \ellp^2$ steps, 
with a similar definition for  $V_i$, $V_j$, and $V_k$.
Note that each step moves tile $i$ horizontally with probability ${1 \over 2n}$.
It follows that $H_i$ is likely to be about ${\ellp^2 \over 2}$. More precisely,
since $H_i$ is a Binomial($n\ell^2, {1 \over 2n}$) random variable, its variance is
at most ${\ellp^2 \over 2}$, and hence
\begin{align}
  \label{cheb}
  \P( \abs{H_i - {\ellp^2 \over 2}}
  \geq {\ellp^2 \over 4}) \leq \frac{\var(H_i)}{ \left( {\ellp^2 \over 4} \right)^2} \leq {8 \over \ellp^2},
\end{align}
by Chebyshev's inequality.
A similar bound holds 
for the other
$H$ and $V$ random variables. Hence, when $\ell$ is sufficiently large we have that
$\P(A) \geq \half$, where $A$ is the event that
all the $H$ and $V$ random variables are in the interval $[{\ellp^2 \over 4} , {3 \ellp^2 \over 4}]$.
~\\
Let $r = \ellp/2$. Then with probability at least $\half$, each of the $H$ and $V$ random variables
is between $r^2$ and $3r^2$. Furthermore, the coordinates
of each random walk
start within distance
$r$ of the coordinates of the centers 
of their respective boxes. 

Recall that the
buffer zones have a side length that is $K$ times larger than the boxes
$\boxi$, $\boxj$ and $\boxk$,
where
$K$ is a constant large enough so that $e^{-{1 \over 3}(2K-1)^2} \leq {1 \over 2 e \sqrt{3}}$. 
We claim that the probability that
each tile's random walk takes it to its target position safely
is $\bigomega( \ell^{-6})$. Since the
tiles' random walks are independent, this follows from six applications
of the following lemma about lazy  simple symmetric random walk on $\Z$.  

\begin{lemma}
\label{loclem}
  Let $N$ and $r$ be positive integers such that $r^2 \leq N \leq 3 r^2$.
Suppose that  $x \in \{-r, \dots, r\}$ and 
let $Y_t$ be a lazy simple random walk on $\Z$
started at $x$.
Then
there is a universal constant $C >0$ such that
if
  $y   \in \{-r, \dots, r\}$, then 
  \[
    \P( \mbox{$Y_N = y$ and $|Y_t| \leq Kr$ for all $t$ with $0 \leq t \leq N$}) \geq C/r \,.
    \]
  \end{lemma}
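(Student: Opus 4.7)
The plan is a standard combination of the local central limit theorem (LCLT) with the reflection principle: first obtain a lower bound of order $1/r$ for the unconstrained endpoint probability $\P(Y_N = y)$, then show that the paths violating the confinement $|Y_t| \leq Kr$ contribute a strictly smaller amount, so the constrained probability is still $\Omega(1/r)$. The $K$ that was fixed at the start of the Stage~2 analysis (so that $e^{-(2K-1)^2/3} \leq 1/(2e\sqrt{3})$) is exactly what one needs to make the reflection term negligible.

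For the first ingredient, $Y_N - x$ is a sum of $N$ i.i.d.\ integer-valued steps with mean $0$ and variance $\half$, so the LCLT for lazy simple random walk on $\Z$ gives
\[
\P(Y_N = y) \;=\; \frac{1}{\sqrt{\pi N}}\, e^{-(y-x)^2/N} \;+\; o\!\left(\frac{1}{\sqrt{N}}\right)
\]
uniformly in $y \in \Z$ (the laziness of the walk removes the usual parity obstruction). Using $r^2 \leq N \leq 3r^2$ and $|y - x| \leq 2r$, the exponent is at most $4$ and the prefactor is at least $(r\sqrt{3\pi})^{-1}$, so $\P(Y_N = y) \geq C_1/r$ for an absolute constant $C_1 > 0$ whenever $r$ is sufficiently large; the finitely many small-$r$ cases are absorbed by shrinking $C_1$.

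For the second ingredient, let $\tau^+ := \min\{t : Y_t = Kr + 1\}$ and $\tau^- := \min\{t : Y_t = -(Kr+1)\}$. Because the step distribution of $Y_t$ is symmetric, the standard reflection argument applies: for each $y \leq Kr$,
\[
\P(Y_N = y,\; \tau^+ \leq N) \;=\; \P\!\left(Y_N = 2(Kr+1) - y\right),
\]
and analogously $\P(Y_N = y,\; \tau^- \leq N) = \P\!\left(Y_N = -2(Kr+1) - y\right)$. Each of these reflected endpoints lies at distance at least $(2K-2)r + 2$ from $x$, so the matching LCLT \emph{upper} bound together with $N \leq 3r^2$ yields
\[
\P\!\left(Y_N = y,\; \exists\, t \leq N : |Y_t| > Kr\right) \;\leq\; \frac{C_2}{r}\, e^{-(2K-2)^2/3}
\]
for an absolute constant $C_2$. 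The condition on $K$ comfortably guarantees that this is at most $C_1/(2r)$, so subtracting from the first ingredient gives the desired lower bound $C_1/(2r)$.

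The main obstacle is producing an LCLT with two-sided control that is $o(1/\sqrt{N})$ uniformly at all three relevant endpoint values $y$, $2(Kr+1) - y$, and $-2(Kr+1) - y$ simultaneously. This is classical for lazy simple random walk on $\Z$ and can be obtained by a direct characteristic-function/saddle-point computation or by a Berry--Esseen estimate applied to the integer-valued density; the role of the laziness is precisely to eliminate the parity cancellation that would otherwise force $\P(Y_N = y) = 0$ whenever $N$ and $y - x$ have different parities.
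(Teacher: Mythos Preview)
Your proposal is correct and follows essentially the same route as the paper: a local CLT lower bound for the endpoint probability combined with the reflection principle to subtract off the paths that exit $[-Kr,Kr]$, using the Gaussian decay at the reflected endpoints and the choice of $K$ to make the subtraction negligible. The only cosmetic differences are that the paper quotes the LCLT with an explicit $O(N^{-3/2})$ error (rather than $o(N^{-1/2})$), reflects at level $Kr$ rather than $Kr+1$, and effectively treats the walk as started at $0$, which yields the slightly sharper exponent $(2K-1)^2/3$ in place of your $(2K-2)^2/3$; none of this changes the argument.
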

  \noindent {\bf Remark.}  Since
  $r = \ellp/2$ and $\ellp = c \ell$, and with probability at least $\half$ all of
  the $H$ and $V$ random variables are between $r^2$ and $3r^2$, Lemma \ref{loclem}
  implies that
  \[
    \P(\mbox{tiles $i$, $j$ and $k$ reach their targets safely}) \geq
    {1 \over 2}  \Bigl( {2 C \over c \ell} \Bigr)^6 = \bigomega( \ell^{-6}),
  \]
  as claimed.
\begin{proof}{Proof of Lemma \ref{loclem}.}
For $m \in \Z$, define $p_N(m) = \P(Y_N = m)$. By
  the local central limit theorem (see Theorem 2.1.1 in \cite{ll}),
  if $\po_N(m) := \frac{1}{\sqrt{\pi N}}e^{-\frac{m^2}{N}}$, then  
there is a universal constant $\xi$ such that
\begin{align}
  \label{lclt}
    \abs{p_N(m) - \po_N(m)} \leq \frac{\xi}{N^{3/2}} \,.
\end{align}
For $m \in \Z$, let $\tau_m := \min\{ t: Y_t = m \}$ be the hitting time of $m$,
and define $\tau = \min(\tau_{Kr}, \tau_{-Kr})$. 
By the reflection principle (see Lemma 2.18 in \cite{lpw}),
\begin{eqnarray*}
  \P( Y_N = y, \tau_{Kr} < N) &=& p_N(Kr + (Kr - y)) \\
                              &=& p_N(2 K r - y),
\end{eqnarray*}
and
\begin{eqnarray*}
  \P( Y_N = y, \tau_{-Kr} < N) &=& p_N(-Kr - (Kr + y)) \\
                              &=& p_N(-2Kr - y).
\end{eqnarray*}
It follows that
\begin{eqnarray}
  \P(Y_N = y, \tau > N) &\geq& p_N(y) - p_N(2 K r - y) - p_N(-2Kr - y)  \nonumber    \\
                        &\geq& p_N(y) - 2 p_N( (2K-1) r) \,,    \label{llast}
\end{eqnarray}
where the second inequality holds because $|y| \leq r$. Let $L = 2K-1$.
(Then the  assumption on the constant $K$ implies that $e^{-L^2/3} \leq {1 \over 2 e \sqrt{3}}$.)
Combining equations (\ref{lclt}) and 
(\ref{llast})
gives
\begin{eqnarray}
  \P(Y_N = y, \tau > N) &\geq& {1 \over \sqrt{ \pi N}} e^{-{ y^2 \over N}} - {2 \over \sqrt{\pi N}}
                               e^{-{L^2 r^2 \over N}} - {2 \xi \over N^{3/2}}    \nonumber  \\
  &\geq& {1 \over \sqrt{ \pi N}} e^{-{ r^2 \over N}} - {2 \over \sqrt{\pi N}}
                               e^{-{L^2 r^2 \over N}} - {2 \xi \over N^{3/2}}   \label{low}       \,.  
\end{eqnarray}
Replacing $N$ with $r^2$ or $3r^2$, as appropriate, in (\ref{low}), gives
\begin{eqnarray}
  \P(Y_N = y, \tau > N) &\geq& {1 \over \sqrt{3 \pi}r} e^{-1} - {1 \over \sqrt{\pi} r} e^{-L^2/3}   \nonumber     \\
                        &=& {1 \over \sqrt{\pi} r} \Bigl[ {e^{-1} \over \sqrt{3}} - e^{-L^2/3}  \Bigr] \,.   \label{square}
                        \end{eqnarray} 
                        Since $e^{-L^2/3} \leq {1 \over 2 e \sqrt{3}}$,
                        the quantity in square brackets in (\ref{square}) is at least ${1 \over 2 e \sqrt{3}}$.
                        This completes the proof of Lemma \ref{loclem}. 

                      \end{proof}
\noindent {\bf Stage 3 Analysis and Final Considerations:}
The analysis for Stage 3 is similar to that for Stage 1.
Since the Markov chain describing the  motion of tiles $i$, $j$ and $k$ is reversible,
the probability that the tiles that end up in positions $i'$, $j'$ and $k'$
at the end of Stage 3 began in boxes $\boxi$, $\boxj$ and $\boxk$, respectively,
at the start of Stage 3,    
is at least ${\eta^6 \over 4}$, by equation (\ref{sone}). 
Combining this with the analysis of Stage 2 shows that the probability
that all three Stages are successful (and hence tiles $i$, $j$ and $k$ all
go to their target positions at the end of the $(4 + c^2) \ell^2 n$ steps) is at least
${D \over \ell^6}$ for some universal constant $D > 0$.
This is still true even if we round the number of steps up to the nearest even integer,
provided that we incorporate an extra factor of $\half$ into the constant $D$.
(Recall that each step of the torus shuffle has a holding probability of $\half$.)

We have proved that there is a constant $l_0$ such that the lemma holds when $l > l_0$,
with a constant $C$ such that $4 \leq C leq 5$. 
We now consider the case where $l \leq l_0$.
It is enough to show that the probability that after $T'$ steps
tiles $i$, $j$ and $k$ go to
their target positions $i'$, $j'$ and $k'$,
respectively, in $T'$ steps is bounded away from zero.

When a step of the torus shuffle moves either tile $i, j$ or $k$ we
shall call it an {\it interesting move}.
It is a straightforward exercise to show that any three tiles
in an $l \times l$ box can be moved to
any three positions in the box using at most $4l^2$ interesting moves.
(In fact it can be done with $\bigo(l)$ interesting moves, but we don't need that.)

Consider a sequence $S$ of $m$ interesting moves that move
tiles $i$, $j$ and $k$ to
their target positions, 
where $m \in \{0, 1, \dots, 16l^2\}$.
Note that, depending on how many rows and columns tiles
$i, j$ and $k$ have in common, there are between $8$ and $12$
possible interesting moves at each step of the torus shuffle. Since the probability of any move
is $\frac{1}{8n}$, each move is interesting with probability between
$\frac{1}{n}$ and $\frac{3}{2n}$.
There are $T'$ total moves and $T'$ is
a little bit more than $Cnl^2$. Since $4 \leq C \leq 5$,
it follows that $\P(E)$ is bounded away from zero,
where $E$ is the event that:
\begin{itemize}
\item  there are at least $m$ interesting moves in the $T'$ steps;
\item  the first $m$ interesting moves follow the sequence $S$.
\end{itemize}

Finally, given
$E$ the conditional probability that 
there are no more
interesting moves after the first $m$ is at least
\[
  \Bigl( 1 - \frac{3}{2n} \Bigr)^{T'} \geq e^{-16l^2},
\]
which is bounded away from $0$ since $l \leq l_0$. 
It follows that the probability that tiles $i$, $j$ and $k$ go to their target positions
in the $T'$ steps is bounded away from zero, proving the lemma in the case where $l \leq l_0$.

This completes 
the proof of Lemma \ref{mainlemma}.
\end{proof}

\section{Applying Theorem \ref{maintheorem} to bound the mixing time}
\label{payoff}
We saw in Section \ref{fitsin} that two steps of the torus shuffle can be described
in terms of a $3$-collision. Therefore, we will actually apply Theorem \ref{maintheorem}
to the shuffle that does two steps of the torus shuffle at each step.
Call this the {\it two-step torus shuffle.}  \\
~\\
We
write
$\pt$ for a product of $t$ i.i.d.~copies of the
two-step torus shuffle. 
The following lemma relates to the decay of
relative entropy after many steps of the two-step torus shuffle.
\begin{lemma}
\label{tl}
There is a universal constant $K$ such that
for any random permutation $\mu$ that is equally likely
to be odd or even, 
there is a value of  
$t \in \{1, \dots, 
{Kn^3}\}$
such that 
\[
\ent(\mu \pt) \leq (1-f(t)) \ent( \mu),
\]
where $f(t) = {\gamma \over \log^2 n} \Bigl( {t \over n^2} \wedge 1\Bigr)$,
for a universal constant $\gamma$. 
\end{lemma}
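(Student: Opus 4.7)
The plan is to apply Theorem \ref{maintheorem} to the two-step torus shuffle, which was shown in Section \ref{fitsin} to be 3-Monte. Fix $t := \lceil K n^3 \rceil$ for a sufficiently large universal constant $K$, so that $t/n^2 \wedge 1 = 1$ and $f(t) = \gamma/\log^2 n$. Since $\mu$ (and therefore $\mu\pt$) is equally likely odd or even, $\ENT(\sgn(\mu)) = 0$, and the entropy decomposition (\ref{threecomp}) applied at $i=3$ gives $\ENT(\mu) = \sum_{k=3}^n E_k$ (using that $\tailt_3^\mu$ together with $\sgn(\mu)$ determines $\mu$). The remark after Theorem \ref{maintheorem} then yields
\[
\ENT(\mu) - \ENT(\mu\pt) \;\geq\; \frac{C}{\log n} \sum_{k=3}^{n} A_k E_k.
\]
Hence to obtain $\ENT(\mu\pt) \leq (1 - \gamma/\log^2 n)\ENT(\mu)$ with $\gamma := Cc$, it suffices to verify the hypothesis of Theorem \ref{maintheorem} with $A_k \geq c/\log n$ for every $k \geq 3$ and some universal constant $c > 0$.

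To verify the hypothesis, fix $k \geq 3$ and $j \in \{1,\ldots,k-1\}$, and set $\ell = \lceil \sqrt{k} \rceil$. The labeling in Figure \ref{fig:grid_labeling} places all positions $1,\ldots,k$ inside the box $B_\ell$. Every 3-collision of the two-step torus shuffle involves three tiles occupying a ``gamma-shaped'' triple of adjacent positions, and the collision's three-cycle structure is determined by the choice of row/column move. Thus a 3-collision of tiles $(k,i,j)$ at time $\tau$ is produced whenever these three tiles occupy a specific gamma at time $\tau-1$ and the row-then-column move together with the independent $\tfrac12$-coin produces the cycle; this conditional probability is bounded below by a universal positive constant. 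Now apply Lemma \ref{mainlemma}: for any $i < k$ and any gamma-triple $G \subset B_\ell$, the probability that tiles $k$, $i$, $j$ arrive at the three designated positions of $G$ after $T' = \Theta(\ell^2 n) = \Theta(kn)$ single-step moves is at least $D/\ell^6 = \Omega(k^{-3})$. Since $B_\ell$ contains $\Omega(\ell^2) = \Omega(k)$ disjoint gammas and there are $k-1$ choices of $i$, summing over (suitably disjoint) events of this form gives total probability $\Omega(1/k)$ for a 3-collision of $(k,i,j)$ with some $i<k$ to be produced somewhere in a window of length $T'$.

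The main obstacle is that Theorem \ref{maintheorem} requires this collision to be the \emph{first} 3-collision in $\{T,\ldots,t\}$ that involves any of $k$, $i$, or $j$, whereas over a window of length $\Theta(kn)$ each of these tiles expects to take part in $\Theta(k)$ 3-collisions. To handle this I would take $T$ uniform on a random subwindow of length $\Theta(T')$ and exploit reversibility of the torus shuffle: running the chain backwards from time $T + T'$ is again a torus shuffle, so ``first collision after $T$'' translates into ``last collision before $T+T'$,'' and Lemma \ref{mainlemma} can be invoked in reverse to place the three tiles on the target gamma during the final $\Theta(\ell^2 n)$ steps. The combinatorial cost of coordinating this reversal with the uniform randomization of $T$ (essentially a union bound over a dyadic splitting of the window to control interfering collisions of $k$, $i$, or $j$) is where I expect the $1/\log n$ loss in $A_k$ to appear. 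Plugging $A_k \geq c/\log n$ into the entropy inequality above completes the proof with $\gamma = Cc$.
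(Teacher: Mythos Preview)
There is a genuine gap. Your plan fixes $t = \lceil Kn^3\rceil$ and attempts to verify the hypothesis of Theorem~\ref{maintheorem} with $A_k \geq c/\log n$ uniformly in $k$, but this cannot succeed for small $k$. Take $k=3$: you would need $\P(M_2(3)=j,\,M_1(3)<3)\geq c/(3\log n)$ for $j\in\{1,2\}$, i.e.\ the first collision in $\{T,\ldots,t\}$ touching any of tiles $1,2,3$ must be a collision of all three. But in Theorem~\ref{maintheorem} the random variable $T$ is \emph{one} object, not a family indexed by $k$; your proposal to ``take $T$ uniform on a subwindow of length $\Theta(T')$'' with $T'=\Theta(kn)$ is not a single choice of $T$. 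If $T$ is of order $n^2$ or larger, tiles $1,2,3$ are essentially at independent uniform locations, so the first collision hitting one of them involves two other tiles chosen essentially uniformly from the grid; the chance these are precisely the remaining two of $\{1,2,3\}$ is $O(n^{-4})$, not $\Omega(1/\log n)$. If instead $T$ is small enough to keep tiles $1,2,3$ adjacent, then for large $k$ the window $[T,t]$ has length of order $n^3$ and Lemma~\ref{mainlemma} at scale $\ell\approx\sqrt{k}$ gives no control over which collision comes \emph{first}. No single distribution for $T$ reconciles these scales.

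The paper's proof avoids this entirely by \emph{not} bounding all $A_k$ simultaneously. It partitions the positions into $m+1=O(\log n)$ dyadic shells $I_1,\ldots,I_{m+1}$ (with $I_k\subset B_{\ell_k}$), uses pigeonhole to pick the shell $I_{k^*}$ carrying at least a $1/m$ fraction of $\ENT(\mu)=\sum_j E_j$, and then chooses $T$ and $t$ adapted to that one scale, namely $T\asymp t\asymp n\ell_{k^*}^2$. One then verifies $A_x\geq c(t/n^2\wedge 1)$ only for $x\in I_{k^*}$ (setting $A_x=0$ elsewhere, which trivially satisfies the hypothesis); the short window $t-T=\tfrac{1}{6}n\ell_{k^*}^2$ is what makes the ``first collision'' requirement tractable, via the substochastic kernel $\tilde P$ and the bound $\P(G_s^c)\leq\tfrac14$ for $s\leq 2n^2/9$. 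The extra $1/\log n$ you were searching for thus comes from the pigeonhole over shells, not from any dyadic splitting of the time window, and this is also why the lemma asserts only the \emph{existence} of some $t\in\{1,\ldots,Kn^3\}$: the value of $t$ depends on $\mu$ through $k^*$.
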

Before proving Lemma \ref{tl}, we first show how 
it gives the claimed mixing time
bound.
\begin{lemma}
\label{mixingtime}
The mixing time for the torus shuffle is 
$\bigo\Bigl( 
n^3 \log^3n  \Bigr)$.
\end{lemma}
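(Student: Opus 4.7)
The plan is to iterate Lemma \ref{tl} to push the relative entropy below a small constant, and then convert that into a total variation bound via Pinsker's inequality (\ref{totent}).

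First I would handle the parity bookkeeping so that the uniform-sign hypothesis of Lemma \ref{tl} holds throughout. For odd $n$ this is assumed at time zero. For even $n$, a single step of the lazy torus shuffle already produces uniform sign, since it is the identity (sign $+1$) or an $n$-cycle (sign $(-1)^{n-1}=-1$) each with probability $\tfrac12$; and the remark after Theorem \ref{maintheorem} shows that the uniform-sign property is preserved under subsequent multiplication by $\pt$. With uniform sign in hand, I would iterate: set $\mu_0$ to the starting state (at time $0$ or $1$ according to parity) and recursively let $\mu_k=\mu_{k-1}\pi_{(t_k)}$ with $t_k\in\{1,\dots,Kn^3\}$ chosen by Lemma \ref{tl} applied to $\mu_{k-1}$. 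Using $1-x\leq e^{-x}$,
\[
\ent(\mu_k)\leq \ent(\mu_0)\,\exp\Bigl(-\sum_{i=1}^{k}f(t_i)\Bigr),
\]
and $\ent(\mu_0)\leq \log n!=O(n\log n)$.

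The quantitative core of the argument is a uniform lower bound $f(t)\geq \gamma\, t/(Kn^3\log^2 n)$ valid for every $t\in\{1,\dots,Kn^3\}$. This is immediate from $f(t)=(\gamma/\log^2 n)\min(t/n^2,1)$: for $t\leq n^2$ one has $f(t)/t=\gamma/(n^2\log^2 n)$, while for $n^2<t\leq Kn^3$ one has $f(t)/t\geq \gamma/(Kn^3\log^2 n)$. Writing $T_k:=t_1+\cdots+t_k$ for the total number of two-step shuffles performed, this gives
\[
\sum_{i=1}^{k}f(t_i)\ \geq\ \frac{\gamma\,T_k}{Kn^3\log^2 n}.
\]
Choosing $k$ so that $T_k$ first exceeds a sufficiently large universal multiple of $n^3\log^3 n$ makes the exponential factor at most $1/(8\,\ent(\mu_0))$, hence $\ent(\mu_k)\leq \tfrac18$, and (\ref{totent}) then gives $\|\mu_k-\u\|\leq \tfrac14$. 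Since each two-step shuffle corresponds to two torus-shuffle steps, the resulting mixing time is $O(n^3\log^3 n)$. Once Lemma \ref{tl} is granted, no step here is a serious obstacle; the only point requiring real care is the uniform $f(t)/t$ lower bound, without which the iterated entropy drops would not accumulate fast enough to yield the stated mixing-time estimate.
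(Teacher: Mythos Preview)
Your proposal is correct and follows essentially the same route as the paper: iterate Lemma \ref{tl}, use the uniform lower bound $f(t)/t \geq \gamma/(Kn^3\log^2 n)$ (equivalently $t/f(t)\leq Kn^3\log^2 n/\gamma$) to convert the accumulated entropy drop into a function of the elapsed time $T_k$, and finish with Pinsker. One minor slip: the deck has $n^2$ tiles, so $\ent(\mu_0)\leq \log\bigl((n^2)!\bigr)=O(n^2\log n)$ rather than $\log n!$; since only $\log(\ent(\mu_0))=O(\log n)$ enters the final estimate, this does not affect the $O(n^3\log^3 n)$ conclusion.
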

\begin{proof}
It suffices to prove the bound for the two-step torus shuffle.   
We can assume that the chain starts in a state that is equally likely
to be odd or even. (This is already an assumption in the case where $n$ is odd.
When $n$ is even, it is true after one step because the chain is lazy and the
row and column rotations are odd permutations.) 
Let $t$ and $f$ be as defined in Lemma \ref{tl}. Then 
\begin{equation}
\lab{rell}
{t \over f(t)} = \gamma^{-1} (\log^2 n)  t \Bigl({n^2 \over t} \vee 1 \Bigr)
= \gamma^{-1} \log^2 n ({n^2 \vee t}) \leq T_n,
\end{equation}
where $T_n := \gamma^{-1} \log^2 [ n^2 \vee {Kn^3}]$.
Note that $1/T_n$ is a bound on the long run rate of relative entropy loss
per unit of time.
Lemma \ref{tl} implies that
there is a 
$t_1 \in \{1, \dots, 
{Kn^3}\}$
such that 
\[
\ent(\pi_1 \cdots \pi_{t_1}) \leq (1-f(t_1)) \ent( \id),
\]
and a 
$t_2 \in \{1, \dots, 
{Kn^3}\}$
such that 
\[
\ent(\pi_1 \cdots \pi_{t_1 + t_2}) \leq (1-f(t_2)) 
\ent( \pi_1 \cdots \pi_{t_1}),
\]
etc. Continue this way to define $t_3, t_4$, and so on. 
For $j \geq 1$ let $\tau_j = \sum_{i=1}^j t_i$. 
Then
\begin{eqnarray}
\ent( \pi_{(\tau_j)}) &\leq& \Bigl[\prod_{i=1}^j (1 - f(t_j)) \Bigr] 
\ent(\id)\\
&\leq& \exp\Bigl( - \sum_{i=1}^{j} f(t_j) \Bigr) \ent(\id).
\end{eqnarray}
Note that since $t_j \leq T_n f(t_j)$ by equation (\ref{rell}), we have
\[
\tau_j = \sum_{i=1}^{j} t_j
\leq T_n \sum_{i=0}^{j} f(t_j).
\]
It follows that
\begin{eqnarray}
\ent( \pi_{(\tau_j)}) &\leq& 
\exp\Bigl({-\tau_j \over T_n} \Bigr) \ent(\id).
\end{eqnarray}
Since $\ent(\id) = \log (n^2)! \leq  2n^2\log n$, it follows that
if $\tau_j \geq 
T_n \log(16n^2 \log n)$ 
we have $\ent(\pi_{(\tau_j)}) \leq \eighth$
and hence $|| \pi_{(\tau_j)} - \u || \leq \quarter$
by equation (\ref{totent}). It follows that 
the mixing time is $\bigo(T_n \log(16n^2 \log n)) = 
\bigo\Bigl( 
n^3 \log^3n  \Bigr)$.
\end{proof}

We shall now prove Lemma \ref{tl}.
\begin{proof}{Proof of Lemma \ref{tl}}
  Let $m = \lceil \log_2 n \rceil$.
  Let $l_0 = 0$ and for positive integers $k$ with $k \leq m+1$, let $l_k = 2^{k-1} \wedge n$.
Then we can partition  the 
set of locations $\{1, \dots, n^2\}$ into $m+1$ ``sideways-L-shaped regions'' as 
follows. For $1 \leq k \leq m+1$ define 
$I_k = \{ (l_{k-1}^2 + 1 , \dots, l_k^2\}$.
Note that the convention for labeling the positions in the deck described at the beginning of Section
\ref{fitsin} ensures that $I_1 \cup \cdots \cup I_k$ is a box of side length $l_k$. 
Since $\mu$ is equally likely to be odd or even, we can decompose the relative entropy of $\mu$
as
\begin{equation}
  \ENT(\mu)  = \sum_{k=3}^{n} \Et_i, \;        \label{dec} 
\end{equation}
where
$\Et_k = \e\left(\ENT(\mu^{-1}(k) \given \tailt_{k+1}^{\mu})\right)$.
The next step is to find the region $I_k$ that contributes most
to the relative entropy. 
After rewriting the sum in (\ref{dec}), we get
\begin{equation}
\lab{eee1}
\ent(\mu) = \sum_{k=1}^m \sum_{j \in I_k} \Et_j\, .
\end{equation}
Thus, if $\kss$ maximizes $\sum_{j \in I_k} \Et_j$, then
\begin{equation}
  \label{logfun}
\sum_{j \in I_\kss}  \Et_j \geq {1 \over m} \ent(\mu).
\end{equation}
Next, we want to choose the values of $t$ and $T$ to be used in Theorem \ref{maintheorem}
based on the value of $\kss$. It is best to choose values of $t$ and $T$ that are roughly equal to the number
of steps required to "mix up" the tiles in a box of size $l_{\kss}$.

\begin{claim}
\label{bigclaim}
Fix $k$ with $2 \leq k \leq m+1$.
Let $C$ be the constant from Lemma \ref{mainlemma}. Let
$T$ be such that $2T$ is the smallest even integer that is at least $9Cn\ell_k^2$ and let
$t = T + {1 \over 6} n \ell_k^2$. 
There is a universal constant $c > 0$ such that if
\begin{align}
    A_x^k := \begin{cases}
        c \left(\frac{t}{n^2} \wedge 1 \right) & \text{if } x \in I_k;\\
        0 & \text{otherwise},
    \end{cases}
\end{align}
then the two-step torus shuffle and the above values of
$t$, $T$, and $A_x^k$
satisfy the assumptions of Theorem \ref{maintheorem}.
\end{claim}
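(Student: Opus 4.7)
The plan is to verify, for each $x \in I_k$ with $x \ge 3$ and each $j \in \{1, \ldots, x-1\}$, the inequality $\P(M_2(x) = j, M_1(x) < x) \ge c (t/n^2 \wedge 1)/x$ for some universal $c > 0$; the bound $A_x^k \le 1$ is automatic. I would decompose by the value of $M_1(x)$: since $x \ge 3$, the set $S = \{1, \ldots, x-1\} \setminus \{j\}$ is nonempty and the events $\{M_1(x) = y, M_2(x) = j\}$ are disjoint across $y \in S$. For every such $y$, both tiles $j$ and $y$ start inside $B_{\ell_k}$ because $y \le x - 1 \le \ell_k^2$, which will let me use Lemma \ref{mainlemma} later.

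For each fixed $y \in S$, I would lower-bound $\P(M_1(x) = y, M_2(x) = j)$ by exhibiting many disjoint engineered 3-collisions. For each $\tau_0 \in \{T+1, \ldots, t\}$ and each gamma-shaped triple of positions $G = (p_x, p_j, p_y) \subseteq B_{3\ell_k}$ whose orientation realizes $M_1(x) = y$, $M_2(x) = j$, let $F^{y,G}_{\tau_0}$ be the event that tiles $x, j, y$ occupy $G$ at time $\tau_0 - 1$, the shuffle $\pi_{\tau_0}$ is the unique two-step shuffle producing the 3-cycle of $G$, and no 3-collision involving any of $\{x, j, y\}$ occurs in $\{T, \ldots, \tau_0 - 1\}$. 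The events $F^{y,G}_{\tau_0}$ are pairwise disjoint as $(y, \tau_0, G)$ varies: distinct $y$ give different tile triples, distinct $\tau_0$ give different first-collision times, and distinct $G$ force different $\pi_{\tau_0}$.

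The probabilistic inputs I would combine are: by design, $2T$ is the smallest even integer at least $9Cn\ell_k^2 = C(3\ell_k)^2 n$, which matches Lemma \ref{mainlemma}'s $T'$ for $\ell = 3\ell_k$ exactly, so for $\tau_0 > T$ one gets $\P(\text{tiles at } G \text{ at time } \tau_0 - 1) \ge D/(3\ell_k)^6$. (For $\tau_0 > T+1$, the extra torus-shuffle steps are absorbed by noting that the tiles remain in $B_{3\ell_k}$ throughout a short buffer window with probability bounded from below via a random-walk displacement estimate, after which Lemma \ref{mainlemma} is applied to the last $T'$ steps.) Independently, $\P(\pi_{\tau_0} \text{ is the prescribed shuffle}) = \beta/n^2$ for a universal $\beta$. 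Counting $\Theta(\ell_k^2)$ valid gammas, $\Theta(\ell_k^2)$ choices of $y$ (using $x \ge \ell_{k-1}^2 + 1 \ge \ell_k^2/4$), and $n\ell_k^2/6$ candidate times, the leading contribution is $\Theta(\ell_k^2 \cdot \ell_k^2 \cdot n\ell_k^2 \cdot \ell_k^{-6} \cdot n^{-2}) = \Theta(1/n)$, matching the target $c(t/n^2 \wedge 1)/x \asymp \min(1/n, 1/\ell_k^2)$.

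The main obstacle is the ``no earlier collision'' clause in $F^{y,G}_{\tau_0}$: by Markov, the expected number of extra 3-collisions involving any of $\{x, j, y\}$ over $\{T, \ldots, t\}$ is $O(\ell_k^2/n)$, which is small when $\ell_k^2 \ll n$. In that regime a union bound loses only a constant fraction of the main sum. When $\ell_k^2$ is comparable to or exceeds $n$, the target $A_x^k/x \sim 1/\ell_k^2$ degrades correspondingly, and one must restrict $\tau_0$ to a shorter sub-window of length $\Theta(n)$ so that the expected bad-collision count is $O(1)$ while retaining enough collision opportunities to preserve a $\Theta(1/\ell_k^2)$ lower bound. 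Reconciling the small-$\ell_k$ and large-$\ell_k$ regimes within a single unified bound, and carefully tracking the constants so that a single universal $c$ works for all $x \in I_k$, is the delicate technical heart of the argument.
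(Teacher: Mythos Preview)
Your overall plan---sum over candidate front matches $y$, over collision times $\tau_0$, and over gamma configurations $G$; invoke Lemma~\ref{mainlemma} for the $\ell_k^{-6}$ factor; pay $\beta/n^2$ for the prescribed two-step move---lines up with the paper's structure and gives the correct main-term count. The genuine gap is your handling of the ``no earlier collision'' clause.

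A union bound at the level of the events $F^{y,G}_{\tau_0}$ cannot work. You would need $\P(H^{y,G}_{\tau_0} \cap \{\text{some earlier collision}\})$ to be small compared to $\P(H^{y,G}_{\tau_0}) \asymp \ell_k^{-6} n^{-2}$, but the only bound you have available drops the ``tiles at $G$'' condition and gives $\P(\text{some collision in }[T,\tau_0-1]) \cdot \beta/n^2$, which is larger by a factor of order $\ell_k^6$. Summing over $(y,\tau_0,G)$ does not help: the subtracted term swamps the main term by a polynomial factor in $\ell_k$, in both regimes. (Incidentally, in the large-$\ell_k$ regime the sub-window should have length $\Theta(n^2)$, not $\Theta(n)$: the chance that a fixed tile is involved in a 3-collision in one two-step move is $\Theta(n^{-2})$, not $\Theta(n^{-1})$.)

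The paper resolves this with a reversibility trick that your sketch is missing. It writes the target probability as
\[
\sum_{x',y',z'} \sum_{s} \sum_{\Gamma} P^T\bigl((x,y,z),(x',y',z')\bigr)\,\widetilde{P}^s\bigl((x',y',z'),\Gamma\bigr)\,\frac{a}{n^2},
\]
where $\widetilde{P}$ is the substochastic kernel for the three-tile chain \emph{killed upon any collision}. Lemma~\ref{mainlemma} is applied only to the $P^T$ factor (the first $T$ steps, not the last), paying the $\ell_k^{-6}$ cost once and with no collision constraint attached. The no-collision constraint lives entirely in $\widetilde{P}^s$, and because $\widetilde{P}$ is symmetric the inner sum $\sum_{x',y',z' \in \widehat{B}_{\ell_k}} \widetilde{P}^s((x',y',z'),\Gamma)$ equals the probability that tiles \emph{started from} $\Gamma$ lie in $\widehat{B}_{\ell_k}$ after $s$ steps without having collided. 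This is an $O(1)$ quantity, so one may legitimately bound it below by $\P(F_s) - \P(G_s^c) \ge \tfrac12$ for $s \le \tfrac{2}{9}n^2$. The symmetry step is what decouples the $\ell_k^{-6}$ local-limit estimate from the no-collision event; without it, your inclusion--exclusion is subtracting $O(1)$ probabilities from $O(\ell_k^{-6})$ ones.
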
 

Before proving Claim \ref{bigclaim}, we show how it implies the lemma. 
Claim \ref{bigclaim} (with $k = \kss$),
Theorem \ref{maintheorem} (in the case of $n^2$ cards)
and the remark immediately following it imply
that there is a universal constant $C$ such that
\begin{eqnarray*}
  \ENT(\mu\pi_{(t)}) - \ENT(\mu) &\leq& \frac{-C}{\log n^2} \sum_{x=1}^{n^2} A_x^\kss E_x    \\
                                 &=& \frac{-Cc}{2\log n} \Bigl(\frac{t}{n^2} \wedge 1 \Bigr)
                                     \sum_{x\in I_{\kss}} E_x \;.  
\end{eqnarray*}
Thus, by equation (\ref{logfun}) and the fact that $m$ is $\bigo(\log n)$, we have
\begin{eqnarray*}
  \ENT(\mu\pi_{(t)}) - \ENT(\mu) &\leq& \frac{-\gamma}{\log^2 n} \Bigl(\frac{t}{n^2}
                                        \wedge 1\Bigr) \ENT(\mu),
\end{eqnarray*}
for a universal constant $\gamma$, which proves Lemma \ref{tl}, assuming Claim \ref{bigclaim}.
It remains to prove Claim \ref{bigclaim}. We will actually prove the following, slightly stronger
lemma, which gives a lower bound on the probabilty that any three specified tiles $x$, $y$ and $z$ in
a box of side length $l_k$ are matched.
\begin{lemma}
  \label{lastlemma}
Fix $k$ with $2 \leq k \leq m+1$ and let $T$ and $t$ be as defined in Claim \ref{bigclaim}. 
Let $B_{\ell_k}$ be a box of side length $l_k$ in the $n \times n$ grid. 
Let $x, y$ and $z$ be distinct tiles in $B_{\ell_k}$ and 
let $T_{xyz}$ be the time of the first
$3$-collision in the time interval $\{T, \dots, t\}$
that involves either $x, y$ or $z$. There is a universal constant $d >0$ such that
\[
  \P( \mbox{$x$, $y$ and $z$ collide at time $T_{xyz}$}) \geq
  \left(\frac{t}{n^2} \wedge 1 \right) \frac{d}{\ell_k^4} \,. 
\]
\end{lemma}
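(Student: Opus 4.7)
The plan is to combine Lemma \ref{mainlemma} with a counting argument over the ``gamma-shaped'' triples of positions involved in a single 3-cycle of the two-step shuffle. At each step $s \in \{T, \ldots, t-1\}$, I aim to show that the probability that $x, y, z$ occupy a gamma configuration is $\Omega(1/\ell_k^4)$, after which the next two-step shuffle executes the triple 3-collision of $(x, y, z)$ with probability $\Theta(1/n^2)$.

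More precisely, for each such $s$ one applies Lemma \ref{mainlemma} with box side length $\ell(s)$ chosen so that the smallest even integer at least $C \ell(s)^2 n$ equals $2s$; the setup $2T \geq 9Cn\ell_k^2 = C(3\ell_k)^2 n$ ensures $3\ell_k \le \ell(s) = O(\ell_k)$ uniformly over the window. This furnishes $\P(x, y, z \text{ at } (p_1, p_2, p_3) \text{ at step } s) \geq \Omega(1/\ell_k^6)$ for every ordered triple of positions in a box $B_{\ell(s)}$ containing $B_{\ell_k}$. Summing over the $\Theta(\ell_k^2)$ gamma configurations in that box---pairwise disjoint events at a fixed step---yields $\P(x, y, z \text{ in some gamma at step } s) \geq \Omega(1/\ell_k^4)$. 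Conditional on being at a specific gamma at step $s$, the two-step shuffle at step $s+1$ performs the associated 3-cycle on $(x, y, z)$ with probability $\Theta(1/n^2)$, independently of the state at step $s$. Letting $N$ denote the number of triple 3-collisions of $(x, y, z)$ in $\{T+1, \ldots, t\}$,
\[
\e[N] \geq (t - T) \cdot \Omega\!\left( \tfrac{1}{n^2 \ell_k^4} \right) = \Omega\!\left( \tfrac{1}{n \ell_k^2} \right),
\]
which matches the required bound $\Omega((t/n^2 \wedge 1)/\ell_k^4)$ when $\ell_k^2 \le n$ and dominates it when $\ell_k^2 \ge n$.

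The main remaining task is converting this expected count into a lower bound on $\P(\text{the first 3-collision in } \{T,\ldots,t\} \text{ involving any of } x,y,z \text{ is a triple})$, since in principle a non-triple collision involving only one or two of $x, y, z$ could occur first. I would split into the two regimes $\ell_k^2 \le n$ and $\ell_k^2 \ge n$. When $\ell_k^2 \le n$, the expected total number of any 3-collisions involving $x,y,z$ in the window is $(t-T) \cdot O(1/n^2) = O(1)$; a Paley-Zygmund second-moment bound on $N$ gives $\P(N \geq 1) = \Omega(\e[N])$, and a Bonferroni-style bound shows the probability that some non-triple collision precedes the first triple is $o(\e[N])$. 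When $\ell_k^2 \ge n$, I would restrict to an initial sub-window of length $\Theta(n^2)$ on which the expected number of 3-collisions involving $x,y,z$ is $O(1)$ but $\e[N] \geq \Omega(1/\ell_k^4)$ still holds, and apply the same argument. The hardest step is the second-moment estimate for $N$: since Lemma \ref{mainlemma} controls only single-time probabilities, bounding $\P(\text{in gamma at step } s,\,\text{in gamma at step } s')$ for $s < s'$ requires conditioning on the configuration at step $s$ and applying Lemma \ref{mainlemma} separately on the intervals $[0, s]$ and $[s, s']$, which is possible thanks to the uniformity of the lemma over initial positions within the same box.
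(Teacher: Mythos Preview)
Your overall strategy---use Lemma \ref{mainlemma} to put the three tiles at a gamma configuration, then fire the collision---is the right idea, and matches the paper's at that level. The gap is in the step you flag as ``the main remaining task'': converting the expected number of triple collisions into a lower bound on the probability that the \emph{first} collision involving any of $x,y,z$ is a triple.

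The Bonferroni argument does not work. Over the window of length $t-T=\Theta(n\ell_k^2)$, the expected number of \emph{any} 3-collisions touching $x$, $y$, or $z$ is $\Theta(\ell_k^2/n)$, whereas $\e[N]=\Theta(1/(n\ell_k^2))$. The ratio is $\Theta(\ell_k^4)$, so non-triple collisions are far more likely than triples, and you cannot simply subtract them off. Your second-moment plan is also under-supported: Paley--Zygmund for $\P(N\ge1)=\Omega(\e[N])$ requires an \emph{upper} bound on $\e[N^2]$, hence on $\P(\text{gamma at }s,\text{ gamma at }s')$, but Lemma \ref{mainlemma} only supplies lower bounds on transition probabilities. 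Even if you had $\P(N\ge1)$, that is still not the target event.

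The paper sidesteps this entirely. It writes the target probability \emph{exactly} as
\[
\sum_{x',y',z'}\ \sum_{s}\ \sum_{\Gamma} P^T\bigl((x,y,z),(x',y',z')\bigr)\,\widetilde P^{\,s}\bigl((x',y',z'),\Gamma\bigr)\cdot\frac{a}{n^2},
\]
where $\widetilde P^{\,s}$ is the \emph{substochastic} kernel that moves the three tiles for $s$ steps with the constraint that no collision involving them occurs. This bakes the ``first collision'' requirement into the formula. Lemma \ref{mainlemma} is applied once, at the single time $T$ (with box side $3\ell_k$), to bound the $P^T$ factor below by $D'/\ell_k^6$. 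Then the key trick: $\widetilde P$ is symmetric, so after swapping the order of summation,
\[
\sum_{x',y',z'\in \widehat B_{\ell_k}} \widetilde P^{\,s}\bigl((x',y',z'),\Gamma\bigr)
=\sum_{x',y',z'\in \widehat B_{\ell_k}} \widetilde P^{\,s}\bigl(\Gamma,(x',y',z')\bigr),
\]
which is just the probability that three tiles started at the gamma $\Gamma$ stay inside the enlarged box $\widehat B_{\ell_k}$ for $s$ steps without being in a collision. That probability is bounded below by $\tfrac12$ via a Chebyshev displacement bound plus a union bound on collisions, for all $s\le \tfrac16 n\ell_k^2 \wedge \tfrac{2n^2}{9}$. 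Summing over $\Theta(\ell_k^2)$ gammas and $\Theta(n\ell_k^2\wedge n^2)$ values of $s$ gives the lemma. The reversibility step is what you are missing: it turns the intractable ``arrive at $\Gamma$ without prior collision'' into the tractable ``leave $\Gamma$ without collision and without going far.''
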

\noindent {\bf Remark.} Suppose $x \in I_k$, $x \geq 3$  and $z < x$. The number of $y$
not equal to $z$ with $y < x$
  is at least $\max(l_{k-1}^2 - 1, 1) \geq \max(\frac{l_k^2}{4} - 1, 1)$. Hence Lemma \ref{lastlemma}
  implies that
\begin{eqnarray*}
  \P( M_2(x) = z, M_1(x) < x) &\geq&
                                     \left(\frac{t}{n^2} \wedge 1 \right) \frac{d}{\ell_k^4} 
                                     \left( \max \left( \frac{l_k^2}{4} - 1, 1 \right) \right)     \\
  &\geq& \frac{c}{x} \left(\frac{t}{n^2} \wedge 1 \right) \;,
\end{eqnarray*}
         for a universal constant $c$, which verifies Claim \ref{bigclaim}. \\
~\\
The last remaining item is to prove lemma \ref{lastlemma}. 
\begin{proof}{Proof of Lemma \ref{lastlemma}.}
Recall from Section \ref{fitsin} that when three tiles collide
they are in an "L-shape," as shown in Figure \ref{fig:3-collision}.
Say that tiles $x$, $y$ and $z$ {\it match nicely}
if $M_1(x) = y, M_2(x) = z$ (that is, tiles $x, y$ and $z$ match),
and at time $T_{xyz}$, tile $x$ is in the middle
of the L-shape. We will actually find a lower bound for the probability
that tiles $x$, $y$ and $z$ match nicely. \\
~\\
Let $P^m\Bigl( (a,b,c), (a', b', c') \Bigr)$ denote the
probability that in $m$ steps of the two-step torus shuffle,
tiles $a$, $b$ and $c$ move to positions $a'$, $b'$ and $c'$,
respectively. 
Let $\ptt^m\Bigl( (a,b,c), (a', b', c') \Bigr)$ denote the
probability that in $m$ steps of the two-step torus shuffle,
tiles $a$, $b$ and $c$ move to positions $a'$, $b'$ and $c'$,
respectively, without being involved in a collision at any point over those $m$ steps.
Note that $\ptt$ is substochastic. It is also symmetric, which
means that
\[
  \ptt^m\Bigl( (a,b,c), (a', b', c') \Bigr) =
  \ptt^m\Bigl( (a',b',c'), (a, b, c) \Bigr) \,.
\]
We have
\begin{eqnarray*}
  & & \P(x,y,z \text{ match nicely}) \\
  &=& \sum_{x',y',z'}    \sum_{s=0}^{\frac{1}{6}n l_k^2-1}
  \sum_{\Gamma}
  P^T\Bigl((x, y, z) ,(x', y', z')\Bigr) \cdot \ptt^s\Bigl((x',y',z'), \Gamma\Bigr)\cdot \frac{a}{n^2} \,,
\end{eqnarray*}
where the sum is over the possible locations $x', y'$ and $z'$ of the three tiles after $T$ steps,
the possible values of number of steps $s$ after time $T$ required to get into an L-shaped configuration,
and all L-shaped configurations $\Gamma$ in the grid; the constant $a = \frac{1}{32}$
because three tiles in an L-shaped configuration will collide in the next step with probability
$\frac{1}{32n^2}$.

Let
$\btt_{l_k}$ be the box of side length $3{l_k}$ with the same center as $B_{l_k}$
(or $\btt_{l_k} =$ the whole $n \times n$ torus if $3 l_k > n$).
By considering only positions $x', y'$ and $z'$ in the box $\btt_{l_k}$ and only
L-shaped configurations inside $B_{l_k}$ we get the following lower bound:
\begin{eqnarray*}
  & & \P(x,y,z \text{ match nicely}) \\
  &\geq& \frac{a}{n^2}
  \sum_{x',y',z'\in \btt_{\ell_k}}\sum_{s=0}^{\frac{1}{6}n l_k^2-1}
  \sum_{\Gamma\in B_{\ell_k}^3}
  P^T((x, y, z) ,(x', y', z')) \cdot \ptt^s((x',y',z'), \Gamma).
\end{eqnarray*}
Note that $T$ steps of the two-step torus shuffle is like $2T$ steps of the
regular torus shuffle, and recall that $T$ is defined so that $2T$
is the smallest even integer that is at least $C n (3l_k)^2$. Since for every term in
the sum above, the positions $x, y,z, x', y'$ and $z'$ are all in the box $\btt_{l_k}$
(a box of side length $3 \ell_k$), Lemma \ref{mainlemma}
implies that
\[
  P^T((x, y, z) ,(x', y', z')) \geq \frac{D}{ (3\ell_k)^6}  \geq \frac{D'}{\ell_k^6},
\]
for a universal constant $D'$. Hence
\begin{eqnarray}
  & & \P(x,y,z \text{ match nicely})    \\
  &\geq& \frac{a D'}{n^2  l_k^6}
  \sum_{x',y',z'\in \btt_{\ell_k}}\sum_{s=0}^{\frac{1}{6}n l_k^2-1}
  \sum_{\Gamma\in B_{\ell_k}^3}
         \ptt^s((x',y',z'), \Gamma)    \\
   &=& \frac{a D'}{n^2  l_k^6}
              \sum_{s=0}^{\frac{1}{6}n l_k^2-1}
  \sum_{\Gamma\in B_{\ell_k}^3}
       \sum_{x',y',z'\in \btt_{\ell_k}}
  \ptt^s((x',y',z'), \Gamma)     \label{ninetyfive}   \;.
\end{eqnarray}
Consider the inner sum in the righthand side of (\ref{ninetyfive}). By the symmetry
of $\ptt$, we can write it as
\begin{equation}
\label{newsum}
       \sum_{x',y',z'\in \btt_{\ell_k}}
  \ptt^s(\Gamma, (x',y',z')) \;.
\end{equation}
This is the probability that in $s$ steps of the two-step shuffle, the tiles
initially in $\Gamma$
move to positions in $\btt_{l_k}$  without ever being involved
in a $3$-collision.
Let $F_s$ be the event that the tiles initially in $\Gamma$
are in $\btt_{l_k}$ after $s$ steps of the two-step torus shuffle. Let
$\tau$ be the time of the first collision involving a tile from $\Gamma$
and let $G_s$ be the event that $\tau > s$. 
The quantity (\ref{newsum}) is
\begin{equation}
  \label{diffbound}
  \P( F_s \cap G_s) \geq \P(F_s) - \P(G_s^c) \,.
\end{equation}
To bound $\P(F_s)$ we shall need the following lemma.
\begin{lemma}
  \label{dis}
  Fix a tile $\tilet$ and let $Y^s = (Y_1^s, Y_2^s)$ be the dispacement of tile $\tilet$
  after $s$ steps of the two-step torus shuffle. If $t \leq \frac{1}{6}nm^2$ then
  \[
    \P( \max( |Y_1^s|, |Y_2^s|) > m) \leq \frac{1}{12} \,.
  \]
\end{lemma}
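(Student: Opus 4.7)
The plan is to bound the second moment of the (torus) displacement and apply Markov's inequality. The whole argument is a second-moment calculation combined with Lemma \ref{L}.

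First I would unwind the two-step torus shuffle into $2s$ consecutive steps of the original lazy torus shuffle. A single original step has a clean effect on a fixed tile $\tilet$: the tile's horizontal coordinate changes by $+1$ or $-1$, each with probability $\tfrac{1}{8n}$, its vertical coordinate changes by $+1$ or $-1$, each with probability $\tfrac{1}{8n}$, and otherwise the tile is stationary. In particular the one-step increment $(\Delta_1,\Delta_2)$ to the displacement satisfies $\E[\Delta_i]=0$ and $\E[\Delta_1^2]=\E[\Delta_2^2]=\tfrac{1}{4n}$.

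Next I would apply Lemma \ref{L} inductively over the $2s$ original steps, using the torus $\ell^2$-norm $\Abs{\cdot}_2$ defined there. Since the increment at each step is mean zero and independent of the current position, each application of Lemma \ref{L} increases the expected squared torus norm by at most $\E[\Delta_1^2]+\E[\Delta_2^2]=\tfrac{1}{2n}$, so
\[
\E\bigl(\Abs{Y^s}_2^2\bigr)\;\le\;2s\cdot\frac{1}{2n}\;=\;\frac{s}{n}.
\]

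Finally, since $\max(|Y_1^s|,|Y_2^s|)^2\le \Abs{Y^s}_2^2$, Markov's inequality yields
\[
\P\bigl(\max(|Y_1^s|,|Y_2^s|)>m\bigr)\;\le\;\frac{\E(\Abs{Y^s}_2^2)}{m^2}\;\le\;\frac{s}{n\,m^2},
\]
which is at most the claimed constant whenever $s\le\tfrac{1}{6}nm^2$ (possibly up to a minor rescaling of the constant in the hypothesis, as the second-moment computation naturally produces $\tfrac{1}{6}$ rather than $\tfrac{1}{12}$; either strengthening the bound on $s$ slightly, or noting that the weaker probability bound still suffices for the application in the proof of Lemma \ref{lastlemma}, closes the gap). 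The main thing to be careful about is the torus wrap-around when $m$ is comparable to $n$, but this is precisely the subtlety handled by Lemma \ref{L}, so there is no real obstacle; the rest is routine.
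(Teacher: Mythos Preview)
Your approach is correct and is essentially the paper's argument: bound the second moment of the displacement and apply Chebyshev. The paper packages it slightly differently, treating the two coordinates separately via stochastic domination by a lazy walk on $\Z$ and then a union bound, rather than invoking Lemma~\ref{L} on the combined torus norm; the two routes give the same variance estimate. Your constant $\tfrac16$ rather than $\tfrac{1}{12}$ comes from correctly counting $s$ steps of the two-step shuffle as $2s$ steps of the underlying lazy walk (the paper's dominating walk $W_s$ is taken with only $s$ increments), and as you note, $\tfrac16$ is perfectly adequate for the downstream use in Lemma~\ref{lastlemma}.
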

\begin{proof}
  Since $Y_1^s$ and $Y_2^s$ have the same distribution it suffices to prove that
  $\P(|Y_1^s| > m) \leq \frac{1}{24}$ and the lemma then follows from a union bound.
  Note that $|Y_1^s|$ is stochastically dominated by $|W_s|$, where $W_s$ is
  a random walk starting at zero with increment distribution
\[
\left\{\begin{array}{lll}
1 &  \mbox{with probability $\frac{1}{8n}$;}\\
-1 & \mbox{with probability $\frac{1}{8n}$;}\\
         0 &  \mbox{with probability $1 - \frac{1}{4n}$.}
             \end{array}
\right.
\]
         It follows that
         \begin{eqnarray}
           \P( |Y_1^s| > m) &\leq& \P( |W_s| > m)  \nonumber      \\
                            &\leq& \frac{\var(W_s)}{m^2}   \nonumber    \\
                            &=& \frac{s}{4nm^2} \;,    \label{cub}
         \end{eqnarray}
                                where the second line is by Chebyshev's inequality. If $s \leq \frac{1}{6}nm^2$ then
                                the quantity (\ref{cub}) is at most $\frac{1}{24}$.
                                \end{proof}
Since $\Gamma$ is inside $B_{l_k}$, each tile from $\Gamma$ will end up inside $\btt_{l_k}$
(a box of width $3 l_k$)
after $s$
steps provided that its displacement in those $s$ steps
is at most $l_k$. Thus, three applications of Lemma \ref{dis}
imply that
\begin{equation}
  \label{fbound}
  \P(F_s^c) \leq \quarter \,.
  \end{equation}
Next, we shall find an upper bound for $\P(G_s^c)$. Note that in any step of the two-step torus
shuffle, a $3$-collision occurs with probability $\frac{1}{8}$. Since there are $3$ tiles
involved in any $3$-collision and a total of $n^2$ tiles,
the probability that a particular tile is in a $3$-collision in
a given step is $\frac{3}{8n^2}$. Thus, by a union bound the probability that any tile from $\Gamma$
is involved in a collision over $s$ steps is at most 
$\frac{9s}{8n^2}$. It follows that if $s \leq \frac{2n^2}{9}$ then
$\P(G_s) \leq \quarter$. Combining this with equations
(\ref{diffbound}) and (\ref{fbound}) gives
$\P(F_s \cap G_s) \geq \half$.

We have shown than when $s \leq \frac{2n^2}{9}$ the inner sum in (\ref{ninetyfive})
is at least $\half$.
Hence there are at least
$\Bigl(\frac{1}{6}n l_k^2 \wedge \frac{2n^2}{9} \Bigr)$
values of 
$s$ such that the inner sum  in (\ref{ninetyfive}) is at least $\half$.
Note also that the number of L-shaped configurations $\Gamma$ inside 
$B_{l_k}$ is at least $|B_{l_k}| = \ell_k^2$. 
If follows that
\begin{eqnarray}
  & & \P(x,y,z \text{ match nicely})  \nonumber  \\
  &\geq&
         \frac{a D'}{n^2  l_k^6}
              \Bigl(\frac{1}{6}n l_k^2 \wedge \frac{2n^2}{9} \Bigr) |
         B_{\ell_k}| \times \frac{1}{2}    \nonumber  \\
  &\geq&
         \frac{D''}{  l_k^4}
              \Bigl( \frac{l_k^2}{n} \wedge 1 \Bigr),   \label{hun}
\end{eqnarray}
for a universal constant $D'' > 0$. 
Finally, note from the definition of $t$
in Claim \ref{bigclaim}
that $t \leq K n l_k^2$ for a
universal constant $K$. Hence $\frac{t}{n^2} \leq \frac{K l_k^2}{n}$. Combining this with
equation (\ref{hun}) gives
\[
  \P(x,y,z \text{ match nicely})  \geq 
         \frac{d}{  l_k^4}
         \Bigl( \frac{t}{n^2} \wedge 1 \Bigr),
       \]
       for a universal constant $d > 0$, proving the lemma.
\end{proof}
This completes the proof of Lemma \ref{tl}.
\end{proof}
\section{Acknowledgments}
We thank Sukhada Fadnavis for many helpful conversations.


\end{document}